\documentclass{amsart}
\usepackage[T1]{fontenc}
\usepackage{amssymb}
\usepackage{amsmath}
\usepackage{amsthm}
\usepackage{hyperref}
\usepackage{comment}
\usepackage[noabbrev]{cleveref} 
\usepackage{color}
\usepackage{enumerate}

\makeatletter
\newcommand{\addresshere}{%
  \enddoc@text\let\enddoc@text\relax
}
\makeatother



\definecolor{blue}{rgb}{0,0,1}
\definecolor{green}{rgb}{0, 0.5, 0}

\newcommand\Z{\mathbb{Z}}
\newcommand\Q{\mathbb{Q}}
\renewcommand\P{\mathbb{P}}
\renewcommand\L{\mathbb{L}}
\DeclareMathOperator{\diag}{diag}
\DeclareMathOperator{\SL}{SL}

\DeclareMathOperator{\lcm}{lcm}
\DeclareMathOperator{\ord}{ord}
\DeclareMathOperator{\wt}{wt}

\DeclareMathOperator{\prim}{prim}

\newcommand{\A}{\mathcal{A}}
\newcommand{\B}{\mathcal{B}}
\newcommand{\C}{\mathcal{C}}

\newcommand{\software}[1]{\textsc{#1}{}}
\newcommand{\Sage}{\software{SageMath}}

\theoremstyle{plain}
\newtheorem{theorem}{Theorem}[section]
\newtheorem{corollary}[theorem]{Corollary}
\newtheorem{lemma}[theorem]{Lemma}
\newtheorem{proposition}[theorem]{Proposition}
\newtheorem*{problem}{Main Problem}


\newenvironment{theorem*}[1]
 {\theoremvar}
 {\endtheoremvar}

\theoremstyle{definition}
\newtheorem{definition}[theorem]{Definition}
\newtheorem{example}{Example}
\newtheorem*{example*}{Example}

\theoremstyle{remark}
\newtheorem{remark}[theorem]{Remark}

\begin{document}

\title{Lattice coverings and homogeneous covering congruences}
\author{J. E. Cremona}
\address{Mathematics Institute, University of Warwick, Coventry, CV4
  7AL, UK.}
\email{J.E.Cremona@warwick.ac.uk}
\author{P. Koymans}
\address{Mathematisch Instituut, Universiteit Utrecht, Postbus 80.010, 3508 TA Utrecht, The Netherlands.}
\email{p.h.koymans@uu.nl}

\begin{abstract}
We consider the problem of covering $\Z^2$ with a finite number of
sublattices of finite index, satisfying a simple minimality or
non-degeneracy condition.  We show how this problem may be viewed as a
projective (or homogeneous) version of the well-known problem of
covering systems of congruences. We give a construction of minimal
coverings which produces many, but not all, minimal coverings, and
determine all minimal coverings with at most~$8$ sublattices.
\end{abstract}

\maketitle



\section{Introduction}
We consider the problem of covering $\Z^2$ with a finite number of
sublattices of finite index, satisfying a simple minimality (or
non-degeneracy) condition.  This problem may be viewed as a projective
(or homogeneous) version of the well-known problem of covering~$\Z$
with residue classes or ``covering systems''.

The focus of in this paper is to understand the structure of such
covering systems and explore systematic constructions of them.  We start by introducing three natural classes of coverings, which become progressively more restrictive.

\begin{definition}
\label{def:covering}
  A finite collection of lattices $\C=\{L_1, \dots,L_n\}$ is said to
\emph{cover}~$\Z^2$, or to be a \emph{covering} of~$\Z^2$, if
\begin{equation}\label{eqn:covering}
   \Z^2 = L_1 \cup L_2 \cup \dots \cup L_n.
\end{equation}
A covering~$\C$ is~\emph{irredundant} if no proper subset of~$\C$ is a
covering; that is, no lattice in the covering is contained in the
union of the others.

A covering is \emph{minimal} if (\ref{eqn:covering}) fails on
replacing any~$L_i$ by a proper sublattice.  In particular, a minimal
covering is irredundant.

A minimal covering~$\{L_i\}$ is said to be \emph{strongly minimal} if every
primitive vector of~$\Z^2$ is contained in \emph{exactly one} of
the~$L_i$.
\end{definition}

One important construction in our paper is that of ``refinement'',
which constructs a new covering from a given covering
(see~\Cref{def:p-refinement} in~\Cref{sec:refinement}).  Moreover, refinement sends strongly minimal coverings to strongly minimal coverings.  However, not
all strongly minimal coverings are obtained by this construction and not all minimal coverings are strongly minimal (see respectively Examples~\ref{ex:strong-not-refinement} and~\ref{ex:min-not-strong}).  For coverings by lattices all of whose indices are powers of the same prime, the three notions of minimality coincide. In fact, we have the even stronger:

\begin{theorem*}{A}[{\Cref{thm:all-p-power}}]
\label{thm:A}
  Let~$\C$ be an irredundant covering.  If all the
  indices of the lattices in~$\C$ are powers of one prime~$p$,
  then~$\C$ is obtained from the trivial covering~$\{\Z^2\}$ by
  successive $p$-refinement operations.
\end{theorem*}

Secondly, we prove a result relating the size of a minimal covering
system to the indices of the lattices in the system:

\begin{theorem*}{B}[{\Cref{thm:Simpson-theorem-2-lattice}}]
  \label{thm:B}
  Let~$\C$ be an irredundant covering.  If the least
  common multiple of the indices of the lattices in~$\C$ has prime
  factorization~$\prod_{i=1}^{t}p_i^{e_i}$, then the number of
  lattices in~$\C$ is at least $t+1+\sum_{i=1}^{t}e_i(p_i-1)$.
\end{theorem*}

\Cref{thm:B}, and its proof,
were inspired by a similar result by Simpson~\cite{Simpson} for
systems of covering congruences, where the corresponding lower bound
is given by~$1+\sum_{i=1}^{t}e_i(p_i-1)$.  It is best possible, the lower bound
being attained by certain coverings which are constructed
by~refinement: see~\Cref{thm:refinements-lower-bound}.  Our result allows us to easily recover Simpson's original result. 

Finally, we list all minimal coverings with at most~$6$ sublattices,
showing that they are all strongly minimal, as well as all those of
size~$7$ and~$8$, some of which are not strongly minimal.

\begin{theorem*}{C}[{\Cref{thm:all-upto-6}, \Cref{thm:all-7}, \Cref{thm:all-8}}]
  \label{thm:C}
  There are~$55$ minimal lattice coverings of size at most~$6$, all of
  which are strongly minimal and obtained by refining the trivial
  covering.  They are listed in~\Cref{tab:size-upto-6}.

There are~$144$ minimal coverings of size~$7$, including~$126$ which
are strongly minimal and~$18$ which are not strongly minimal.  They
are listed in~\Cref{tab:size-7}.

There are~$724$ minimal coverings of size~$8$, including~$550$ which
are strongly minimal and~$174$ which are not strongly minimal.  They
are listed in~\Cref{tab:size-8} in~\Cref{aTable}.
\end{theorem*}

A key observation in our paper, explored in~\Cref{sec:projective_coverings}, is that the lattice covering problem is equivalent to a projective version of the classical congruence covering problem (see~\Cref{sec:congruences} for a detailed comparison between lattice coverings and the congruence covering problem).  The congruence covering problem was introduced by Erd\H{o}s.  Their first use was in 1950 to show that a positive proportion of the integers are not of the shape $2^k + p$, see~\cite{Erdos1}. Erd\H{o}s~\cite{Erdos2} formally introduced them in 1952 and returned many times to the topic, raising many influential questions and conjectures. This led to a vast amount of literature on the topic, including~\cite{Hough, HN} and~\cite{BBMST}.  We refer to the work of Balister~\cite{Balister} for an excellent overview of the topic.

After introducing the projective (or homogeneous) covering problem
in~\Cref{sec:projective_coverings}, in~\Cref{sec:lattices} we define
cocyclic lattices of index~$N$ in~$\Z^2$, showing how these are
parametrised by the projective line~$\P^1(\Z/N\Z)$.  This allows us to
reformulate the projective covering problem in terms of coverings
of~$\Z^2$ by lattices, in~\Cref{sec:covering}.  \Cref{sec:refinement}
concerns the concept of refinement and includes~\Cref{thm:A} (see
\Cref{thm:all-p-power}) as well as a lower bound on the size of a
refinement of the trivial covering in terms of the least common
multiple of the indices of the lattices in the covering
(\Cref{thm:refinements-lower-bound}).  This lower bound result is
extended to apply to all irredundant coverings
in~\Cref{sec:lower-bounds} where we prove~\Cref{thm:B} (see
\Cref{thm:Simpson-theorem-2-lattice}). In \Cref{sec:small-size} we
apply the preceding results to classify and describe all minimal
lattice coverings of size at most~$8$, as in~\Cref{thm:C}.  Finally,
\Cref{sec:congruences} summarises the analogues of our results on
lattice coverings in the context of covering congruences.


\subsection{Acknowledgements}
This study arose out of a series of papers by the second author with
\'E.~Fouvry, which study the value sets of integral binary forms.  A key
ingredient in those papers is an enumeration of all minimal lattice
coverings (in the sense we define here) of size at most~$6$.  This
list can be easily reconstructed using our results
(see~\Cref{sec:small-size}).  It was after the first author attended
an online seminar given by the second author on his work with Fouvry
that this study began, and both authors would like to thank Professor
G.~Kálmán and the organisers of the Debrecen online Number Theory
Seminar for stimulating their collaboration.  The first author gratefully acknowledges the support of the Dutch Research Council (NWO) through the Veni grant ``New methods in arithmetic statistics''.


\section{Covering systems and projective covering systems}
\label{sec:projective_coverings}
Recall that a \emph{covering system of congruences} is a finite system
of congruences $x\equiv a_i\pmod{N_i}$ for $1\le i\le n$ such that
every integer satisfies at least one of the congruences.
Writing~$R(a;N)$ for the set of integers congruent to~$a\pmod{N}$, the
covering condition is simply that
\[
\Z=\bigcup_{i=1}^{n}R(a_i;N_i).
\]
The simplest such systems consist, for any positive integer~$N$, of
all the residue classes~$R(a;N)$ for~$0\le a<N$.

For each~$N$ we have a surjective reduction map~$\Z\to\Z/N\Z$,
and~$R(a;N)$ is the preimage of one element of~$\Z/N\Z$, namely the
class of~$a\pmod{N}$.  From this viewpoint, a covering system of
congruences consists of a finite set of elements in~$\Z/N_i\Z$ (for
various moduli~$N_i$) whose preimages in~$\Z$ cover~$\Z$.

We now consider a projective version of this.  Recall that for any
ring~$R$ (commutative and with~$1$), the~\emph{projective
line}~$\P(R)$ is defined as
\[
\P(R) = \{(c,d)\in R^2\mid cR+dR=R\} / {}\sim{},
\]
where the equivalence relation~${}\sim{}$ is defined by
\[
(c_1,d_1)\sim(c_2,d_2) \iff c_1d_2=c_2d_1.
\]
For $v_i=(c_i,d_i)\in R^2$, we write~$v_1\wedge v_2=c_1d_2-c_2d_1$;
then the equivalence relation may be written
\[
v_1\sim v_2 \iff v_1\wedge v_2 = 0.
\]
Note that the coprimality condition~$cR+dR=R$ is that~$a,b\in R$ exist
with~$ad-bc=1$, or equivalently that $(c,d)$ is the second row of a
matrix in~$\SL(2,R)$. Note also that
\[
  v_1\sim v_2\iff\exists u\in R^{\times}:\ uc_1=c_2, ud_1=d_2.
\]
The backward implication is trivial. For the forward implication, one may check that if $a_id_i-b_ic_i=1$ for~$i=1,2$ and
$c_1d_2=d_1c_2$, then~$u=a_1d_2-b_1c_2$ is a unit with
inverse~$a_2d_1-b_2c_1$. (See \cite[Proposition~2.2.1]{JCbook},
where~$R=\Z$, but the proof there works in general.)

We denote the class of~$(c,d)$ in~$\P(R)$ by~$(c:d)$.  For~$R=\Z$,
elements~$(c:d)\in\P(\Z)$ are represented by \emph{primitive}
vectors~$(c,d)\in\Z^2$ (a vector $(c,d)\in\Z^2$ is primitive
if~$\gcd(c,d)=1$), uniquely up to sign.

If~$\phi:R\to S$ is a ring homomorphism, there is an induced
well-defined map from~$\P(R)$ to~$\P(S)$, sending~$(c:d)$
to~$(\phi(c):\phi(d))$.  In particular, we have reduction
maps~$\P(\Z)\to\P(\Z/N\Z)$ for all positive integers~$N$. As
these~$\P(\Z/N\Z)$ feature frequently in what follows, we will
denote~$\P(\Z/N\Z)$ simply by~$\P(N)$.  Let~$\psi(N)$ denote the
cardinality of~$\P(N)$; a formula for~$\psi(N)$ is given
in~\Cref{cor:psi-formula} below.

Note that for two integers~$c,d$ to define an element of~$\P(N)$ via
reduction modulo~$N$, we only require that~$\gcd(c,d,N)=1$, not
that~$c,d$ are coprime.  However there always exists a representative
pair which are actually coprime:
\begin{lemma}
  \label{lem:cd-coprime}
  For all integers~$N\ge1$, the reduction map $\P(\Z)\to\P(N)$ is
  surjective.
\end{lemma}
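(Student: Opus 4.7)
The plan is to pick any integer lift $(c,d) \in \Z^2$ of a representative of the given class in $\P(N)$, so that $\gcd(c,d,N)=1$, and then modify only the second coordinate, replacing $d$ by $d' = d + kN$ for a suitable integer $k$ chosen so that $\gcd(c, d') = 1$. The resulting primitive vector $(c, d')$ then defines an element of $\P(\Z)$ whose image in $\P(N)$ agrees with the original class, giving surjectivity. To avoid a degenerate case we may assume $c \neq 0$: if the chosen residue of $c$ modulo $N$ is zero, we simply lift it to $c = N$ instead.

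The crux is to find the required $k$, and I would handle this by a prime-by-prime analysis followed by the Chinese Remainder Theorem. For each prime $p$ dividing $c$, there are two cases. If $p \mid N$, then from $p \mid c$, $p \mid N$, and $\gcd(c,d,N) = 1$ we deduce $p \nmid d$, so $p \nmid d + kN$ automatically for every $k$. If $p \nmid N$, then $N$ is invertible modulo~$p$, and the divisibility $p \mid d + kN$ cuts out a unique forbidden residue class $k \equiv -dN^{-1} \pmod{p}$; the remaining $p-1 \geq 1$ residue classes modulo~$p$ are all allowed. Since $c \neq 0$ has only finitely many prime divisors, CRT produces an integer $k$ avoiding the forbidden residue modulo every such prime $p \nmid N$ dividing $c$. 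For this $k$ no prime dividing $c$ divides $d + kN$, hence $\gcd(c, d+kN)=1$ as required.

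There is no real obstacle here; the only subtle points are the need to arrange $c \neq 0$ (so that the product in CRT ranges over a finite set of primes) and the observation that primes dividing both $c$ and $N$ are automatically harmless by the coprimality hypothesis $\gcd(c,d,N)=1$. Once $k$ is constructed, $(c, d+kN)$ is a primitive integer vector representing a class in $\P(\Z)$ whose reduction modulo $N$ is $(c:d)$, establishing the lemma.
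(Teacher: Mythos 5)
Your argument is correct, and it is genuinely different from the paper's. You give a direct, self-contained construction: lift the class to integers $(c,d)$ with $\gcd(c,d,N)=1$ (arranging $c\neq 0$ by taking $c=N$ if necessary), and then adjust the second coordinate to $d+kN$, choosing $k$ by a prime-by-prime analysis plus the Chinese Remainder Theorem so that no prime divisor of $c$ divides $d+kN$; the key observation that primes dividing both $c$ and $N$ are automatically harmless (because $\gcd(c,d,N)=1$ forces $p\nmid d$, while $p\mid kN$) is exactly right, and the edge cases ($c\equiv 0\pmod N$, $c=\pm1$, $N=1$) are all handled. The paper instead deduces the lemma in one line from the surjectivity of the reduction map $\SL(2,\Z)\to\SL(2,\Z/N\Z)$, citing Shimura: a class in $\P(N)$ is the bottom row of a matrix in $\SL(2,\Z/N\Z)$, and the bottom row of a lift to $\SL(2,\Z)$ is a coprime pair reducing to it. Your approach buys elementarity and independence from that nontrivial external result (indeed, your lemma is the standard first step in elementary proofs of the $\SL_2$ surjectivity itself), whereas the paper's choice is economical and fits its broader framework, where lifts of $(c:d)_N$ to matrices in $\Gamma=\SL(2,\Z)$ and their $\Gamma_0(N)$-cosets are used repeatedly (e.g.\ in \Cref{lem:Gamma0N_cosets} and \Cref{prop:Lcdn-cocyclic}).
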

\begin{proof}
  This follows from the fact that the reduction
  map~$\SL(2,\Z)\to\SL(2,\Z/N\Z)$ is surjective. See Lemma 1.38 of
  \cite{Shimura} for one proof of this.
\end{proof}
Given two integers~$c,d$ with~$\gcd(c,d,N)=1$, let~$(c:d)_N$ denote
the element of~$\P(N)$ defined by the reductions of~$c$ and~$d$
modulo~$N$. While every element of~$\P(N)$ may be written as~$(c:d)_N$
with $\gcd(c,d)=1$ by~\Cref{lem:cd-coprime}, when we write such an
element as~$(c:d)_N$ we only assume the weaker condition, unless
explicitly stated. Then by definition,
\begin{align*}
(c_1:d_1)_N=(c_2:d_2)_N &\iff
c_1d_2\equiv c_2d_1\pmod{N} \\ &\iff
(c_1:d_1)\wedge(c_2:d_2) \equiv0\pmod{N}.
\end{align*}
We denote the preimage of~$(c:d)_N$
in~$\P(\Z)$ by
\begin{align*}
  R(c:d;N) &= \{(x:y)\in\P(\Z)\mid dx\equiv cy\pmod{N}\}\\
  &= \{(x:y)\in\P(\Z)\mid (x,y)\wedge(c,d)\equiv 0\pmod{N}\}.
\end{align*}
We call the~$\psi(N)$ subsets of~$\P(\Z)$ defined this way the~\emph{homogeneous residue classes} modulo~$N$
in~$\P(\Z)$.

\begin{remark}
  The sets~$\P(\Z)$ and~$\P(\Q)$ may be identified, since it is
  easy to see that the map~$\P(\Z)\to\P(\Q)$, induced by the
  inclusion~$\Z\to\Q$, is a bijection.  (Similarly for any principal
  ideal domain and its field of fractions.)  For our purposes, it is
  more convenient to use~$\P(\Z)$, as its elements have an almost
  unique representation as a pair of coprime integers.
\end{remark}

\begin{lemma}
  \label{lem:reduction-P1N-P1M}
  Let~$M,N$ be positive integers with $M\mid N$.  The map
  $\P(N)\to\P(M)$ defined by~$(c:d)_N\mapsto(c:d)_M$ is well-defined,
  and surjective.
\end{lemma}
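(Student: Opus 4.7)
The plan is to verify the two assertions separately, each by a straightforward compatibility argument using $M \mid N$, together with an appeal to~\Cref{lem:cd-coprime} for surjectivity.

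For well-definedness, I would first check that if a pair $(c,d)$ represents an element of $\P(N)$, then it also represents an element of $\P(M)$: that is, if $\gcd(c,d,N)=1$ then $\gcd(c,d,M)=1$. Indeed, any common prime divisor $p$ of $c$, $d$, $M$ would also divide $N$, since $M\mid N$, contradicting $\gcd(c,d,N)=1$. Next, I would check that the value $(c:d)_M$ does not depend on the representative chosen: if $(c_1:d_1)_N = (c_2:d_2)_N$, then $c_1d_2 \equiv c_2d_1 \pmod{N}$, and reducing this congruence modulo $M$ (which we may do since $M\mid N$) gives $c_1d_2 \equiv c_2d_1 \pmod{M}$, so $(c_1:d_1)_M=(c_2:d_2)_M$.

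For surjectivity, I would take an arbitrary element $(c:d)_M \in \P(M)$. By~\Cref{lem:cd-coprime}, the reduction $\P(\Z)\to\P(M)$ is surjective, so there exist coprime integers $c',d'$ with $(c':d')_M = (c:d)_M$. Since $\gcd(c',d')=1$ we certainly have $\gcd(c',d',N)=1$, so $(c':d')_N$ is a well-defined element of $\P(N)$, and by construction it maps to $(c:d)_M$ under the reduction $\P(N)\to\P(M)$.

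No step here is really an obstacle: the argument is essentially bookkeeping, with the only subtlety being the initial check that the coprimality condition $\gcd(c,d,N)=1$ automatically implies $\gcd(c,d,M)=1$ when $M\mid N$, so the reduction map makes sense at the level of representatives.
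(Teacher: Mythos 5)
Your proposal is correct and follows essentially the same route as the paper: the paper likewise notes that $\gcd(c,d,N)=1$ implies $\gcd(c,d,M)=1$ and deduces surjectivity by choosing a coprime representative via~\Cref{lem:cd-coprime} and lifting it to~$\P(N)$. The only cosmetic difference is that you verify independence of the representative via the congruence $c_1d_2\equiv c_2d_1$, while the paper invokes the unit-scaling description (that $(\Z/N\Z)^\times$ maps to $(\Z/M\Z)^\times$); these are equivalent checks.
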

\begin{proof}
  It is clear that the map is well-defined, noting
  that~$\gcd(c,d,N)=1$ implies $\gcd(c,d,M)=1$ and that~$(\Z/N\Z)^\times$ maps to $(\Z/M\Z)^\times$. For surjectivity, if we
  write an element of~$\P(M)$ in the special form~$(c:d)_M$ with $\gcd(c,d)=1$
  (using~\Cref{lem:cd-coprime}), it is then the image
  of the element~$(c:d)_N\in\P(N)$.
\end{proof}

The following is a simple projective version of the Chinese Remainder
Theorem. See~\Cref{prop:CRT-P1-full} below for a more general
statement (and proof), where the moduli are not necessarily coprime.
\begin{proposition}\label{prop:CRT-P1}
  Let~$N_1$, $N_2$ be coprime integers, and set $N=N_1N_2$.  The
  surjective maps~$\P(N) \to \P(N_i)$ for~$i=1,2$
  of~\Cref{lem:reduction-P1N-P1M} induce a map~$\P(N) \to
  \P(N_1)\times\P(N_2)$ which is a bijection.  Hence~$\psi$ is a
  multiplicative function: $\psi(N)=\psi(N_1)\psi(N_2)$
  when~$\gcd(N_1,N_2)=1$.
\end{proposition}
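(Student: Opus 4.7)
The plan is to verify that the natural map
$$\Phi\colon \P(N)\longrightarrow \P(N_1)\times\P(N_2),\qquad (c:d)_N\mapsto\bigl((c:d)_{N_1},(c:d)_{N_2}\bigr),$$
obtained by composing with the two reduction maps of \Cref{lem:reduction-P1N-P1M}, is both injective and surjective; the multiplicativity $\psi(N)=\psi(N_1)\psi(N_2)$ then follows immediately by counting.

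For injectivity I would use the wedge-product criterion recalled just before the statement. If $(c:d)_N$ and $(c':d')_N$ have equal images, then $cd'-c'd\equiv 0\pmod{N_i}$ for $i=1,2$, and since $\gcd(N_1,N_2)=1$ this upgrades to $cd'-c'd\equiv 0\pmod{N}$, giving $(c:d)_N=(c':d')_N$ in $\P(N)$.

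For surjectivity, given $(\alpha_1,\alpha_2)\in\P(N_1)\times\P(N_2)$, I would first invoke \Cref{lem:cd-coprime} to choose representatives $\alpha_i=(c_i:d_i)_{N_i}$ with $\gcd(c_i,d_i)=1$ (the actual coprimality, not merely $\gcd(c_i,d_i,N_i)=1$), and then use the classical CRT in $\Z$ to produce integers $c,d$ with $c\equiv c_i$ and $d\equiv d_i\pmod{N_i}$ for $i=1,2$. The point requiring care is to check that $(c,d)$ defines a well-defined element of $\P(N)$, i.e. that $\gcd(c,d,N)=1$: any prime $p\mid N$ divides exactly one of the $N_i$, and then $(c,d)\equiv(c_i,d_i)\pmod{p}$ together with $\gcd(c_i,d_i)=1$ forces $p\nmid\gcd(c,d)$. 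By construction $\Phi((c:d)_N)=(\alpha_1,\alpha_2)$, so surjectivity follows.

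I expect the only (minor) obstacle to be precisely this coprimality bookkeeping at the end of the surjectivity argument; this is the one place where the stronger lifting provided by \Cref{lem:cd-coprime} is genuinely needed, and where the coprimality $\gcd(N_1,N_2)=1$ is used beyond its role in CRT over $\Z$. Everything else is formal manipulation with the definition of $\P(R)$ and the wedge-product reformulation of equivalence already established in the excerpt.
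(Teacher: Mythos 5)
Your argument is correct and takes essentially the paper's route: the paper deduces this statement from the more general \Cref{prop:CRT-P1-full} (the case $M=1$), whose proof of existence of a common preimage is exactly your CRT lift with the same $\gcd(c,d,N)=1$ bookkeeping, while the injectivity you spell out via the wedge criterion is the easy direction left implicit there. One small remark: the full strength of \Cref{lem:cd-coprime} is not really needed at the end, since the automatic condition $\gcd(c_i,d_i,N_i)=1$ already prevents any prime $p\mid N_i$ from dividing both $c$ and $d$.
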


\begin{lemma}
  \label{lem:index-p}
  Let~$p$ be prime and~$M\ge1$. Under the surjective
  map~$\P(pM)\to\P(M)$, each element of~$\P(M)$ has~$p$
  preimages in~$\P(pM)$ if~$p\mid M$ and~$p+1$ preimages in~$\P(pM)$ if~$p\nmid M$.  Hence
  \begin{equation}\label{eqn:psi-p-formula}
  \psi(pM) = \begin{cases} p\psi(M) & \text{if $p\mid M$;}\\
    (p+1)\psi(M) & \text{if $p\nmid M$.}
  \end{cases}
  \end{equation}
\end{lemma}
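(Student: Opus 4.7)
The plan is to prove both halves of the Lemma by first showing that the surjection $\P(pM) \to \P(M)$ has all fibers of equal cardinality, and then computing that common cardinality as $\psi(pM)/\psi(M)$ using an explicit formula for $\psi$ at prime powers combined with multiplicativity from the Chinese Remainder Theorem (\Cref{prop:CRT-P1}).

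For the equinumerosity I would exploit the natural action of $\SL(2,\Z)$ on $\P(N)$ for every $N$, given by matrix multiplication on representatives. This action is transitive on $\P(N)$: by \Cref{lem:cd-coprime} every element of $\P(N)$ is represented by a coprime pair $(c,d)\in\Z^2$, which appears as a row of some matrix in $\SL(2,\Z)$, carrying $(0:1)$ to $(c:d)$. Since the action on $\P(N)$ factors through $\SL(2,\Z/N\Z)$ and commutes with the reduction $\P(pM)\to\P(M)$, elements of $\SL(2,\Z)$ permute the fibers of the latter map. Consequently every fiber has size $\psi(pM)/\psi(M)$.

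For the count I would explicitly enumerate $\P(p^e)$ for $e\ge 1$: every class has a representative $(c,d)$ with $\gcd(c,d,p)=1$, so either $d$ is a unit in $\Z/p^e\Z$, in which case multiplying by $d^{-1}$ yields a unique representative $(k:1)$ with $k\in\Z/p^e\Z$ (contributing $p^e$ classes), or $p\mid d$, forcing $c$ to be a unit and yielding a unique representative $(1:pm)$ with $m\in\Z/p^{e-1}\Z$ (contributing $p^{e-1}$ classes). Hence $\psi(p^e)=p^{e-1}(p+1)$, while $\psi(1)=1$. Writing $M=p^a M'$ with $\gcd(p,M')=1$, \Cref{prop:CRT-P1} gives $\psi(pM)/\psi(M)=\psi(p^{a+1})/\psi(p^a)$, which is $p+1$ when $a=0$ (i.e.\ $p\nmid M$) and $p$ when $a\ge 1$ (i.e.\ $p\mid M$), matching the claimed fiber size in both cases; the closed form \eqref{eqn:psi-p-formula} follows. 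The only mildly delicate point is the boundary $a=0$, absorbed by the convention $\psi(1)=1$, so the argument is essentially bookkeeping once transitivity is set up.
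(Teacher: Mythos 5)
Your proposal is correct, but it takes a genuinely different route from the paper, chiefly in the case $p\mid M$. The paper's proof is fiber-by-fiber and explicit: for $p\nmid M$ it uses the bijection $\P(pM)\cong\P(p)\times\P(M)$ of \Cref{prop:CRT-P1} and reads off fibers of size $\psi(p)=p+1$, while for $p\mid M$ it reduces by CRT to $M=p^k$ and writes down the $p$ lifts of $(c:1)_{p^k}$ as $(c+tp^k:1)_{p^{k+1}}$, $0\le t<p$ (and symmetrically when $p\nmid c$). You instead never inspect an individual fiber: transitivity of the $\SL(2,\Z)$-action on $\P(M)$ together with equivariance under reduction shows all fibers of $\P(pM)\to\P(M)$ are equinumerous, so each has size $\psi(pM)/\psi(M)$, which you then evaluate by directly enumerating $\P(p^e)$ (getting $\psi(p^e)=p^{e-1}(p+1)$, in effect proving \Cref{cor:psi-formula} first) and invoking multiplicativity from \Cref{prop:CRT-P1}. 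Your homogeneity argument is clean, avoids the representative case split, and meshes well with the $\Gamma_0(N)$-coset picture of \Cref{sec:lattices}; the paper's version has the advantage of exhibiting the preimages explicitly, which is what later feeds into the concrete description of $p$-descendants in \Cref{cor:lattice-index-p}. The only points you should make explicit are routine: the action is well defined on $\P(N)$ because $\SL_2$ preserves the wedge pairing and the unimodularity condition, and the two families $(k:1)$ and $(1:pm)$ in your count of $\P(p^e)$ are disjoint because whether the second coordinate is a unit is an invariant of the class; with those remarks your argument is complete.
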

\begin{proof}
  If~$p\nmid M$ then the map~$\P(pM)\to\P(M)$ is the composite of the
  bijection $\P(pM) \to \P(p)\times\P(M)$ of~\Cref{prop:CRT-P1} with
  projection onto the second factor.  Each fibre of the map has
  cardinality~$\psi(p)=\#\P(p)=p+1$.  Explicitly, let~$c_0,d_0$ be
  coprime integers representing an element~$(c_0:d_0)_M\in\P(M)$, and
  let~$c',d'$ run through coprime pairs representing the~$p+1$
  elements of~$\P(p)$.  Then the $p+1$ lifts of~$(c_0:d_0)_M$
  to~$\P(Mp)$ are obtained by applying the Chinese Remainder Theorem
  to give integers~$c,d$ with~$(c,d)\equiv(c_0,d_0)\pmod{M}$
  and~$(c,d)\equiv(c',d')\pmod{p}$; while~$c$ and~$d$ are not
  necessarily coprime, we do have~$\gcd(c,d,pM)=1$, so that~$(c:d)_{pM}$
  is well-defined.

  In this case we have $\psi(pM)=\psi(p)\psi(M)=(p+1)\psi(M)$.

  Now suppose that $p\mid M$.  We claim that each $(c:d)_M$ has
  exactly~$p$ lifts to~$\P(pM)$, implying that~$\psi(pM)=p\psi(M)$ in
  this case.  By the Chinese Remainder Theorem again, it suffices to
  assume that~$M$ is a power of~$p$, say~$M=p^k$ with~$k\ge1$.  Now
  if~$p\nmid d$ we may assume that~$d=1$, and the lifts
  of~$(c:1)_{p^k}$ are~$(c+tp^k:1)_{p^{k+1}}$ for $0\le
  t<p$. Similarly if~$p\nmid c$.
\end{proof}


\begin{corollary}
  \label{cor:psi-formula}
  The function $\psi(N) = \#\P(N)$ is given by
  \begin{equation}\label{eqn:psi-formula}
    \psi(N) = N \prod_{p \mid N} (1+1/p),
  \end{equation}
  the product being over all primes dividing~$N$.
\end{corollary}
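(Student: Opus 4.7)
The plan is to combine the multiplicativity of $\psi$ from \Cref{prop:CRT-P1} with the prime-power recursion from \Cref{lem:index-p}. Once we know $\psi(N_1N_2) = \psi(N_1)\psi(N_2)$ for coprime $N_1,N_2$, it suffices to check the claimed formula on prime powers and then take the product over the prime factorization of $N$.

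First I would compute $\psi(p^k)$ for a prime $p$ and $k\ge0$. The base case is $\psi(1)=1$: the ring $\Z/1\Z$ is the zero ring, and $\P(0)$ has a single element (the only pair $(0,0)$ satisfies the coprimality condition vacuously since $0\cdot\Z+0\cdot\Z = 0 = \Z/1\Z$). Applying \Cref{lem:index-p} with $M=1$ gives $\psi(p) = (p+1)\psi(1) = p+1$. Then for each $k\ge1$, since $p\mid p^k$, \Cref{lem:index-p} yields $\psi(p^{k+1}) = p\psi(p^k)$, and an easy induction gives
\[
\psi(p^k) = p^{k-1}(p+1) = p^k\Bigl(1+\tfrac{1}{p}\Bigr)
\]
for all $k\ge1$.

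Next, writing $N = \prod_{i=1}^{t} p_i^{e_i}$, repeated application of \Cref{prop:CRT-P1} gives
\[
\psi(N) = \prod_{i=1}^{t} \psi(p_i^{e_i}) = \prod_{i=1}^{t} p_i^{e_i}\Bigl(1+\tfrac{1}{p_i}\Bigr) = N\prod_{p\mid N}\Bigl(1+\tfrac{1}{p}\Bigr),
\]
which is \eqref{eqn:psi-formula}.

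There is no real obstacle here, since both the multiplicativity and the local factor at each prime have already been established. The only minor point to be careful about is the initial case $\psi(1)=1$ (or equivalently verifying the formula when $N=1$, where the empty product is $1$); everything else is a bookkeeping exercise combining the two earlier results.
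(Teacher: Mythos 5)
Your proof is correct and follows essentially the same route as the paper, which deduces the formula by induction from the recursion \eqref{eqn:psi-p-formula} of \Cref{lem:index-p} (your explicit use of \Cref{prop:CRT-P1} to reduce to prime powers is just a slight repackaging of that induction). Only a cosmetic slip: in the base case you mean $\P(\Z/1\Z)$, i.e.\ $\P(1)$, not ``$\P(0)$''.
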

\begin{proof}
  This formula (which is standard, see Proposition 1.43 (1) in
  \cite{Shimura}), follows by induction from \cref{eqn:psi-p-formula}.
\end{proof}

Note that for each residue class~$R(a;N)$ and divisor~$M\mid N$, there
is a unique residue class modulo~$M$ containing~$R(a;N)$,
namely~$R(a;M)$.  Then one way to state the usual Chinese Remainder
Theorem is as follows.  Given two residue classes~$R(a_1,N_1)$
and~$R(a_2;N_2)$, the following are equivalent:
\begin{enumerate}[(i)]
\item $R(a_1,N_1)$ and~$R(a_2;N_2)$ are contained in the same residue
  class modulo~$M=\gcd(N_1,N_2)$;
\item $R(a_1,N_1)\cap R(a_2;N_2)$ is a residue class
  modulo~$N=\lcm(N_1,N_2)$.
\end{enumerate}
In case conditions~(i) and~(ii) fail, the intersection in~(ii) is empty.

The projective version of this will be useful in the context of
lattices.
\begin{proposition}
\label{prop:CRT-P1-full}
  Given~$(c_1:d_1)_{N_1}\in\P(N_1)$ and~$(c_2:d_2)_{N_2}\in\P(N_2)$, the
following are equivalent, where~$M=\gcd(N_1,N_2)$ and~$N=\lcm(N_1,N_2)$:
\begin{enumerate}[(i)]
\item $(c_1:d_1)_{N_1}$ and~$(c_2:d_2)_{N_2}$ have the same image
  in~$\P(M)$;
\item $(c_1:d_1)_{N_1}$ and~$(c_2:d_2)_{N_2}$ have a common preimage
  in~$\P(N)$.
\end{enumerate}
In terms of homogeneous residue classes, $R(c_1:d_1;N_1)$
and~$R(c_2:d_2;N_2)$ have nonempty intersection if and only if they
are contained in the same homogeneous residue class modulo~$M$, or
equivalently, $(c_1,d_1)\wedge(c_2,d_2)\equiv0\pmod{M}$;
in this case, the intersection is a homogeneous residue class
modulo~$N$.
\end{proposition}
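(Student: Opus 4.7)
The plan is to prove the equivalence (i) $\Leftrightarrow$ (ii) by a prime-by-prime reduction resting on \Cref{prop:CRT-P1}, and then to read off the residue-class statement from the definitions together with \Cref{lem:cd-coprime}. The direction (ii) $\Rightarrow$ (i) is immediate: since $M\mid N_i\mid N$, the two composite reduction maps $\P(N)\to\P(N_i)\to\P(M)$ both coincide with the direct map $\P(N)\to\P(M)$, so any common preimage in $\P(N)$ produces a common image in $\P(M)$.

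For the converse (i) $\Rightarrow$ (ii), I would apply \Cref{prop:CRT-P1} repeatedly to identify each of $\P(N)$, $\P(N_1)$, $\P(N_2)$, and $\P(M)$ with the product of its prime-power components, and check that the four reduction maps in sight respect these decompositions. This reduces the problem to a fixed prime $p$: writing $a_i = v_p(N_i)$ and assuming without loss of generality that $a_1\le a_2$, we have $v_p(M)=a_1$ and $v_p(N)=a_2$. In this situation the desired common preimage in $\P(p^{a_2})$ is simply the $p$-component of $(c_2:d_2)_{N_2}$ itself, since it is trivially its own image in $\P(p^{a_2})$ and its further reduction to $\P(p^{a_1})$ coincides with the $p$-component of $(c_1:d_1)_{N_1}$ exactly by hypothesis~(i).

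For the residue-class reformulation, the intersection $R(c_1:d_1;N_1)\cap R(c_2:d_2;N_2)$ consists by definition of those $(x:y)\in\P(\Z)$ that reduce to $(c_i:d_i)_{N_i}$ in $\P(N_i)$ for $i=1,2$. By the surjectivity of $\P(\Z)\to\P(N)$ (\Cref{lem:cd-coprime}), such an $(x:y)$ exists if and only if $(c_1:d_1)_{N_1}$ and $(c_2:d_2)_{N_2}$ admit a common preimage in $\P(N)$, which by the equivalence already proved is in turn equivalent to their having equal image in $\P(M)$; unwinding the latter via the definition of $\P(M)$ gives $c_1d_2\equiv c_2d_1\pmod{M}$, i.e.\ $(c_1,d_1)\wedge(c_2,d_2)\equiv0\pmod{M}$. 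When the intersection is nonempty, it is precisely the preimage in $\P(\Z)$ of the common preimage $(c:d)_N\in\P(N)$, namely the homogeneous residue class $R(c:d;N)$.

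The main obstacle is essentially bookkeeping: verifying that the multiplicativity isomorphism $\P(N)\cong\prod_p\P(p^{v_p(N)})$ is compatible with reduction to $\P(N_1)$, $\P(N_2)$, and $\P(M)$, as well as with the further reduction to the prime-power factors. Once this compatibility is in place, the existence of the common preimage at each prime is a triviality and the global common preimage is assembled by the coprime CRT of \Cref{prop:CRT-P1}.
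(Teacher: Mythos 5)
Your proposal is correct, but your forward direction (i)$\implies$(ii) takes a genuinely different route from the paper's. The paper argues directly: from equality of images in $\P(M)$ it extracts a unit $u$ modulo $M$ with $uc_1\equiv c_2$ and $ud_1\equiv d_2\pmod M$, lifts $u$ to a unit modulo $N_1$ (using surjectivity of $(\Z/N_1\Z)^\times\to(\Z/M\Z)^\times$), and then applies the ordinary integer Chinese Remainder Theorem to the now-compatible congruences to get explicit integers $c,d$ with $\gcd(c,d,N)=1$; this produces the common preimage without ever invoking \Cref{prop:CRT-P1}. You instead decompose $\P(N)$, $\P(N_1)$, $\P(N_2)$, $\P(M)$ into prime-power components via \Cref{prop:CRT-P1} and note that the claim is trivial at each prime. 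Both work, but be aware of one structural caveat: in the paper, \Cref{prop:CRT-P1} is stated with its proof deferred to \Cref{prop:CRT-P1-full} itself, so as the paper is organized your argument would be circular unless you first establish the coprime multiplicativity independently. This is easy (the ring isomorphism $\Z/N\Z\cong\Z/N_1\Z\times\Z/N_2\Z$ identifies unimodular pairs componentwise, and the relation $c_1d_2=c_2d_1$ is checked componentwise), and the ``bookkeeping'' you defer --- compatibility of the product decompositions with all the reduction maps --- is indeed routine, since every map in sight is induced by a ring homomorphism and the corresponding squares of rings commute. The trade-off: the paper's construction is self-contained and yields an explicit representative pair $(c,d)$, while your reduction is conceptually cleaner and gives, essentially for free, the uniqueness of the common preimage in $\P(N)$ (its $p$-components are forced), which is exactly what justifies your final assertion that the intersection equals the single class $R(c:d;N)$; that small uniqueness point deserves an explicit sentence, since as written you assert it without proof (alternatively, note that any two elements of the intersection satisfy the wedge congruence modulo both $N_1$ and $N_2$, hence modulo $N$).
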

\begin{proof}
  (ii)$\implies$(i): Given~(ii), there exist coprime integers~$c,d$
  defining~$(c:d)_N\in\P(N)$ such that $(c:d)_{N_1}=(c_1:d_1)_{N_1}$
  and $(c:d)_{N_2}=(c_2:d_2)_{N_2}$.  But then (by the well-defined
  property of reduction in~\Cref{lem:reduction-P1N-P1M}) we have
  $(c_1:d_1)_M = (c:d)_M = (c_2:d_2)_M$.

  (i)$\implies$(ii): Given~(i), $(c_1:d_1)_M=(c_2:d_2)_M$, so there
  exists~$u$ coprime to~$M$ such that $uc_1\equiv c_2$, $ud_1\equiv
  d_2\pmod{M}$. We may take~$u$ to be coprime to~$N_1$, since the map
  $(\Z/N_1\Z)^\times\to(\Z/M\Z)^\times$ is surjective. Then
  $(uc_1:ud_1)_{N_1}=(c_1:d_1)_{N_1}$.  By the usual Chinese Remainder
  Theorem, there exist integers~$c,d$ (unique modulo~$N$) congruent
  to~$uc_1,ud_1$ modulo~$N_1$ and to~$c_2,d_2$ modulo~$N_2$.  We
  have~$\gcd(c,d,N)=1$, since $\gcd(c,d,N_1)=\gcd(uc_1,ud_1,N_1)=1$
  and~$\gcd(c,d,N_2)=\gcd(c_2,d_2,N_2)=1$. Hence $(c:d)_N$ is a valid
  element of~$\P(N)$, and it is a common preimage of~$(c_1:d_1)_{N_1}$
  and~$(c_2:d_2)_{N_2}$.
\end{proof}

We now state the projective version of the covering congruence
problem.
\begin{problem} Describe all finite sets~$\{(c_i:d_i)_{N_i} \in\P(N_i) \mid
    1\le i\le n\}$ such that
    \[
    \P(\Z) = \bigcup_{i=1}^{n}R(c_i:d_i;N_i).
    \]
\end{problem}
In other words, we seek to cover~$\P(\Z)$ with a finite set of
homogeneous residue classes.  As with the standard covering congruence
problem, there are simple solutions to the problem, for each~$N$,
with~$n=\psi(N)$ and $N_i=N$ for all~$i$, consisting of all~$\psi(N)$
elements of~$\P(N)$.  The trivial covering arises when $N=1$,
since~$R(0:1;1)=\P(\Z)$.

In the next section we will translate the projective congruence
covering problem into a different form, where we seek to cover~$\Z^2$
with a finite set of~\emph{lattices}: subgroups of finite index
in~$\Z^2$.  These may be constructed as follows.  Starting
with~$(c:d)_N\in\P(N)$ (where~$\gcd(c,d,N)=1$), lift the associated
residue class~$R(c:d;N)\subseteq\P(\Z)$ to~$\Z^2$, to obtain the set
of primitive vectors~$(x,y)\in\Z^2$ satisfying~$cy\equiv dx\pmod{N}$.
Now, dropping the coprimality condition on~$x,y$, define
\begin{equation}
\label{eqn:LcdN-def}
L(c:d;N) = \{(x,y)\in\Z^2\mid cy\equiv dx\pmod{N}\}.
\end{equation}
If we set~$v=(c,d)\in\Z^2$, the same set will be denoted~$L(v;N)$, so
\[
L(v;N) = \{w\in\Z^2\mid v\wedge w\equiv0\pmod{N}\}.
\]
This is a lattice in~$\Z^2$ (as defined in~\Cref{def:lattice} below),
of index~$N$. One basis of ~$L(v;N)$ is~$\{N(a,b),(c,d)\}$, where
$ad-bc=1$; it follows that the quotient~$\Z^2/L(v;N)$ is therefore
cyclic of order~$N$.  The primitive vectors in~$L(v;N)$ are precisely
the lifts to~$\Z^2$ of the preimage in~$\P(\Z)$ of~$R(c:d;N)$.


\section{Cocyclic lattices}
\label{sec:lattices}
In this section we define and study lattices and cocyclic lattices in
$\Z^2$, showing in~\Cref{cor:P1N-LcdN-bijection} that for
each~$N\ge1$, the map~$(c:d)_N\mapsto L(c:d;N)$ is a bijection
between~$\P(N)$ and the set~$\L(N)$ of cocyclic lattices of index~$N$.

\begin{definition}
  \label{def:lattice}
  In this paper, a \emph{lattice} is a subgroup of $\Z^2$ of finite
  index. A lattice $L$ is \emph{cocyclic} if the finite quotient group
  $\Z^2/L$ is cyclic; equivalently, $L$ is not contained in $c\Z^2$
  for any integer~$c>1$.

  The set of all cocyclic lattices of index~$N$ is denoted~$\L(N)$.
\end{definition}

We write elements of~$\Z^2$ as row vectors.  Every lattice~$L$ of
index~$N$ has the form~$L=\Z^2A$, where~$A$ is a~$2\times2$ integer
matrix with determinant~$\det(A)=N$, the rows of~$A$ forming
a~$\Z$-basis of~$L$.  For~$L$ to be cocyclic, a necessary and
sufficient condition is that~$A$ is \emph{primitive} (that is, having
coprime entries), or equivalently that the Smith normal form of~$A$
has the form~$D_N=\diag(N,1)$.  Then~$A=UD_NV$ with
$U,V\in\Gamma:=\SL(2,\Z)$, and~$L=\Z^2UD_NV=\Z^2D_NV$.  Hence
\[
\L(N) = \{\Z^2D_NV \mid V\in\Gamma\};
\]
however, different matrices~$V\in\Gamma$ can result in the same
lattice~$\Z^2D_NV$, as we now make precise.

Recall that the congruence subgroup $\Gamma_0(N)$ of~$\Gamma$ is
defined as
\[
\Gamma_0(N) = \Gamma \cap D_N^{-1}\Gamma D_N =
\left\{\begin{pmatrix}a&b\\ c&d\end{pmatrix}\in\Gamma\mid
c\equiv0\pmod{N}\right\}.
\]
\begin{lemma}
  \label{lem:Gamma0N_cosets}
  Let~$N$ be a positive integer, $D_N=\diag(N,1)$, and
  $V_i=\begin{pmatrix}a_i&b_i\\ c_i&d_i\end{pmatrix}\in\Gamma$ for
  $i=1,2$. Let $L_i=\Z^2D_NV_i$ be the associated cocyclic
  lattices. Then the following are equivalent:
  \begin{enumerate}[(i)]
  \item $L_1=L_2$;
  \item $\Gamma_0(N)V_1=\Gamma_0(N)V_2$;
  \item $(c_1:d_1)_N=(c_2:d_2)_N$.
  \end{enumerate}
\end{lemma}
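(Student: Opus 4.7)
The plan is to prove the cycle (i)$\Leftrightarrow$(ii)$\Leftrightarrow$(iii) by direct matrix calculations, the key tool being the characterization of equality of row lattices.

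For (i)$\Leftrightarrow$(ii), I would use the standard fact that two integer matrices $A,B$ of the same nonzero determinant span the same row lattice if and only if $A=UB$ for some $U\in\GL(2,\Z)$, and that equal positive determinants force $U\in\SL(2,\Z)=\Gamma$. Applied to $A=D_NV_1$ and $B=D_NV_2$, both of determinant~$N$, this says
\[
L_1=L_2 \iff D_NV_1V_2^{-1}D_N^{-1}\in\Gamma \iff V_1V_2^{-1}\in D_N^{-1}\Gamma D_N.
\]
Since $V_1V_2^{-1}\in\Gamma$ automatically, the right-hand condition is exactly $V_1V_2^{-1}\in\Gamma\cap D_N^{-1}\Gamma D_N=\Gamma_0(N)$, which is (ii).

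For (ii)$\Rightarrow$(iii), I would write $V_1V_2^{-1}=\begin{pmatrix}\alpha&\beta\\ \gamma&\delta\end{pmatrix}\in\Gamma_0(N)$, so $\gamma\equiv0\pmod N$ and $\alpha\delta\equiv1\pmod N$, in particular $\delta\in(\Z/N\Z)^\times$. Reading off the bottom row of $V_1=(V_1V_2^{-1})V_2$ gives $(c_1,d_1)=\gamma(a_2,b_2)+\delta(c_2,d_2)$, so $(c_1,d_1)\equiv\delta(c_2,d_2)\pmod N$, yielding $(c_1:d_1)_N=(c_2:d_2)_N$. For (iii)$\Rightarrow$(ii), I would compute the bottom-left entry of $V_1V_2^{-1}=V_1\begin{pmatrix}d_2&-b_2\\ -c_2&a_2\end{pmatrix}$, which is $c_1d_2-d_1c_2=(c_1,d_1)\wedge(c_2,d_2)$; the assumption $(c_1:d_1)_N=(c_2:d_2)_N$ is exactly that this wedge vanishes mod~$N$, so $V_1V_2^{-1}\in\Gamma_0(N)$.

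There is no real obstacle: everything reduces to one linear-algebra identity and tracking how invertibility mod~$N$ propagates. The only subtlety worth a line of care is that the $(c_i,d_i)$ are automatically coprime (being the bottom row of an element of $\Gamma$), so that $(c_i:d_i)_N$ is a bona fide element of $\P(N)$, and that the scalar $\delta$ appearing above is genuinely a unit mod~$N$, which is forced by $\alpha\delta-\beta\gamma=1$ together with $\gamma\equiv0\pmod N$.
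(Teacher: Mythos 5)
Your proof is correct and takes essentially the same approach as the paper: the equivalence (i)$\iff$(ii) is obtained exactly as there, by observing that $L_1=L_2$ is equivalent to $V_1V_2^{-1}\in\Gamma\cap D_N^{-1}\Gamma D_N=\Gamma_0(N)$. The only difference is that for (ii)$\iff$(iii) the paper simply cites a standard reference, whereas you write out the bottom-row computation (with the unit $\delta$ and the wedge $c_1d_2-c_2d_1$) explicitly, which makes the argument self-contained but is the same underlying calculation.
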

\begin{proof}
The equivalence of (ii) and~(iii) is standard: see, for example,
\cite[Proposition~2.2.1]{JCbook}.  For the equivalence of~(i)
and~(ii), we have $L_1=L_2\iff D_NV_1=UD_NV_2$ with~$U\in\Gamma$,
which is equivalent to~$V_1V_2^{-1}\in D_N^{-1}\Gamma D_N$.
\end{proof}

Since every element of~$\P(N)$ has the form $(c:d)_N$ with $c,d\in\Z$
coprime, there always exists a matrix~$V\in\Gamma$ with $(c\ d)$ as
its second row.  We call such a matrix a \emph{lift} of~$(c:d)_N$
to~$\Gamma$.  By~\Cref{lem:Gamma0N_cosets}, this lift is not unique,
but its $\Gamma_0(N)$-coset is.

In~\cref{eqn:LcdN-def}, we associated to each primitive
vector~$v=(c,d)$, and each integer~$N\ge1$, the set
\[
L(v;N) = L(c:d;N) = \{w\in\Z^2\mid w\wedge v\equiv0\pmod{N}\}.
\]
Note that~$v\in L(v;N)$ for all~$N\ge1$.
\begin{lemma}
  \label{lem:same-lattice}
  Let~$v_i=(c_i,d_i)\in\Z^2$ for~$i=1,2$ be primitive and~$N\ge1$. The
  following are equivalent:
  \begin{enumerate}[(i)]
  \item $v_1\wedge v_2\equiv0\pmod{N}$;
  \item $(c_1:d_1)_N = (c_2:d_2)_N$;
  \item $v_1\in L(v_2;N)$;
  \item $v_2\in L(v_1;N)$;
  \item $L(v_1;N) = L(v_2;N)$.
  \end{enumerate}
\end{lemma}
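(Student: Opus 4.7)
The plan is to establish all five equivalences by first extracting what is essentially tautological from the definitions, and then handling the one nontrivial implication: that condition (i) forces the lattices to coincide.

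First I would observe that (i) $\iff$ (iii) and (i) $\iff$ (iv) are immediate from the very definition of $L(v;N)$, together with the antisymmetry $v_1 \wedge v_2 = -(v_2 \wedge v_1)$. Similarly, (i) $\iff$ (ii) is the reformulation of equality in $\P(N)$ already recorded in the display just after~\Cref{lem:cd-coprime}: two pairs $(c_i:d_i)_N$ are equal exactly when $c_1 d_2 \equiv c_2 d_1 \pmod{N}$, i.e.\ $v_1 \wedge v_2 \equiv 0 \pmod{N}$. Finally, the implications (v) $\implies$ (iii) and (v) $\implies$ (iv) follow from the trivial observation, noted just before the lemma statement, that $v_i \in L(v_i;N)$ for every~$N$.

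The only substantive step is to deduce (v) from (i) (or from (ii)). Here I would use the explicit description of equality in $\P(N)$ recorded in the introduction of the paper: the equivalence
\[
v_1 \sim v_2 \iff \exists u \in (\Z/N\Z)^\times:\ uc_1 \equiv c_2,\ ud_1 \equiv d_2 \pmod{N}.
\]
Assuming (ii), pick such a unit $u$. Then for any $w=(x,y)\in\Z^2$,
\[
w \wedge v_2 = x d_2 - y c_2 \equiv u(x d_1 - y c_1) = u (w \wedge v_1) \pmod{N},
\]
and since $u$ is a unit modulo~$N$ we conclude that $w \wedge v_1 \equiv 0 \pmod{N}$ if and only if $w \wedge v_2 \equiv 0 \pmod{N}$. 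By the definition of $L(v_i;N)$, this says precisely that $L(v_1;N)=L(v_2;N)$, establishing~(v).

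Putting these pieces together gives the cycle (i) $\iff$ (ii), (i) $\iff$ (iii), (i) $\iff$ (iv), (v) $\implies$ (iii), (ii) $\implies$ (v), closing the loop. The only place where more than definition-chasing is needed is the step (ii) $\implies$ (v), and even this reduces to invoking the unit $u$ relating $v_1$ and $v_2$; I do not anticipate any genuine obstacle.
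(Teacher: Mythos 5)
Your proof is correct. The treatment of (i)--(iv) coincides with the paper's: all four are, by definition, restatements of the congruence $c_1d_2\equiv c_2d_1\pmod N$. Where you diverge is the step to (v). The paper argues that it suffices to check that the two lattices contain the same primitive vectors, and deduces this from the equivalence of (i)--(iv) (implicitly using that a cocyclic lattice is determined by its primitive vectors, a fact only made explicit later in \Cref{lem:primitive-basis}). You instead invoke the unit characterisation of equality in $\P(N)$ from the introduction --- valid here since the $v_i$ are primitive, so their reductions satisfy the unimodularity condition --- and verify directly that $w\wedge v_2\equiv u\,(w\wedge v_1)\pmod N$ for \emph{every} $w\in\Z^2$, so the two defining congruences cut out the same sublattice. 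This is slightly longer but self-contained: it proves equality of the full lattices without any appeal to primitive vectors determining the lattice, whereas the paper's route is shorter at the cost of leaning on that (forward-referenced) fact. Both arguments are sound.
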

\begin{proof}
  Statements~(i)--(iv) are all equivalent to the homogeneous
  congruence~$c_1d_2\equiv c_2d_1\pmod{N}$, by definition. For~(v), it
  is enough to show that both contain the same primitive vectors,
  which follows immediately from the first parts.
\end{proof}

\begin{proposition}
  \label{prop:Lcdn-cocyclic}
  For all primitive~$v$ and all~$N\ge1$,
  \[
  L(v;N) = \Z^2D_NV \in \L(N),
  \]
  where $V\in\Gamma$ is any matrix lifting~$v$.  Moreover, the set of
  primitive vectors in~$L(v;N)$ is the set of all second rows of
  matrices in the coset~$\Gamma_0(N)V$.
\end{proposition}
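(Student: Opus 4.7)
The plan is to establish both claims by exhibiting an explicit basis for $L(v;N)$ that comes from any lift $V \in \Gamma$ of $v = (c,d)$, and then to deduce the cocyclicity and the description of primitive vectors from standard facts about $\SL(2,\Z)$ already established in the preceding lemmas.

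First I would write $V = \begin{pmatrix} a & b \\ c & d\end{pmatrix} \in \Gamma$ and compute $D_NV = \begin{pmatrix} Na & Nb \\ c & d\end{pmatrix}$, whose rows are candidate generators for $L(v;N)$. The inclusion $\Z^2 D_N V \subseteq L(v;N)$ is immediate from bilinearity of the wedge: the second row $(c,d)$ clearly wedges to $0$ with itself, while $(Na,Nb)\wedge(c,d) = N(ad-bc) = N \equiv 0 \pmod N$. For the reverse inclusion, given $w=(x,y)$ with $w\wedge v\equiv 0\pmod N$, I would invert $V$ explicitly as $V^{-1} = \begin{pmatrix} d & -b \\ -c & a\end{pmatrix}$ and observe that $wV^{-1} = (xd - yc,\ -xb + ya)$; the first coordinate is divisible by $N$ exactly because $w\in L(v;N)$, so $wV^{-1} = (mN, n)$ for integers $m,n$, giving $w = (m,n)D_NV$.

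From this presentation, cocyclicity and index $N$ follow immediately: the matrix $D_NV$ has determinant $N$, and its entries have gcd equal to $\gcd(N\gcd(a,b),\gcd(c,d)) = 1$ since $(c,d)$ is primitive, so $D_NV$ is a primitive integer matrix. By the characterisation of $\L(N)$ given just before the lemma (primitivity is equivalent to Smith normal form $D_N$), we conclude $L(v;N) \in \L(N)$.

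For the second assertion, a vector $w \in \Z^2$ is primitive precisely when it occurs as the second row of some matrix in $\Gamma$. So the primitive vectors of $L(v;N)$ are exactly the second rows $(c',d')$ of matrices $V' \in \Gamma$ satisfying $(c':d')_N = (c:d)_N$; by \Cref{lem:Gamma0N_cosets}, this is the condition $\Gamma_0(N)V' = \Gamma_0(N)V$. Conversely, any $V' \in \Gamma_0(N)V$ has second row primitive and lying in $L(v;N)$ (either by a short direct computation using a general element $\gamma = \begin{pmatrix}\alpha&\beta\\ N\gamma'&\delta\end{pmatrix} \in \Gamma_0(N)$, or more cleanly by invoking \Cref{lem:same-lattice} applied to the two lifts). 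I do not anticipate a genuine obstacle here: the only mildly delicate step is checking that every $w\in L(v;N)$ actually lies in $\Z^2 D_NV$ (and not merely in $\Q^2 D_NV$), which is precisely where the divisibility $N \mid xd-yc$ gets used.
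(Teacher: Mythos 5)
Your proof is correct and follows essentially the same route as the paper: the explicit computation with $V^{-1}$ showing $w\in\Z^2D_NV\iff N\mid w\wedge v$ is exactly the paper's chain of equivalences $w\in\Z^2D_NV\iff w(D_NV)^{-1}\in\Z^2\iff v\wedge w\in N\Z$, and the description of the primitive vectors via \Cref{lem:Gamma0N_cosets} is the same as well. Your extra details (the gcd check for primitivity of $D_NV$ and the explicit converse for the coset claim) are fine but not a different method.
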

\begin{proof}
  Let~$v\in\Z^2$ be primitive.  A simple computation shows that
  \begin{align*}
    w\in\Z^2D_NV &\iff w(D_NV)^{-1}\in\Z^2 \\
    &\iff v\wedge w\in N\Z \\
    &\iff w\in L(v;N).
  \end{align*}
  Hence~$L(v;N)=\Z^2D_NV$, and~$\Z^2D_NV$ is clearly a cocyclic lattice.

  For every primitive vector~$(c',d')\in L(v;N)$ we
  have~$(c:d)_N=(c':d')_N$, and~$(c',d')$ is the second row of a
  matrix~$V'\in\Gamma$, which by~\Cref{lem:Gamma0N_cosets} is in the
  same~$\Gamma_0(N)$-coset as~$V$.
\end{proof}

\begin{corollary}
  \label{cor:P1N-LcdN-bijection}
  Let~$N$ be a positive integer. The map
  \[
    (c:d)_N \mapsto L(c:d;N)
  \]
  (defined for coprime pairs~$c,d$) is a bijection
  from~$\P(N)$ to~$\L(N)$.
\end{corollary}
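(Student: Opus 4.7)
The plan is to assemble this bijection directly from the machinery already developed in \Cref{lem:same-lattice} and \Cref{prop:Lcdn-cocyclic}; no new computation should be required.

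First I would argue well-definedness. An element of $\P(N)$ is presented as $(c:d)_N$ for a coprime pair $(c,d)$, which exists by \Cref{lem:cd-coprime}, but this presentation is not unique. If $(c_1,d_1)$ and $(c_2,d_2)$ are two coprime pairs with $(c_1:d_1)_N=(c_2:d_2)_N$, then the equivalence of (ii) and (v) in \Cref{lem:same-lattice} gives $L(c_1:d_1;N)=L(c_2:d_2;N)$, so the map is independent of the chosen representative. That the image actually lies in $\L(N)$, that is, that $L(c:d;N)$ is a cocyclic lattice of index~$N$, is the content of \Cref{prop:Lcdn-cocyclic}, which identifies $L(v;N)$ with $\Z^2 D_N V$ for any lift $V\in\Gamma$ of~$v$.

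Next I would verify injectivity: if $L(c_1:d_1;N)=L(c_2:d_2;N)$ then the implication (v)$\Rightarrow$(ii) of \Cref{lem:same-lattice} immediately forces $(c_1:d_1)_N=(c_2:d_2)_N$. Finally, for surjectivity, take any $L\in\L(N)$. By the discussion preceding \Cref{lem:Gamma0N_cosets}, every cocyclic lattice of index~$N$ has the form $L=\Z^2 D_N V$ for some $V\in\Gamma$; letting $(c,d)$ be the second row of~$V$, \Cref{prop:Lcdn-cocyclic} yields $L=L(c:d;N)$, so the map hits every element of $\L(N)$.

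No step here should present any real obstacle: the corollary is essentially a repackaging of \Cref{lem:same-lattice} (for injectivity and well-definedness) and \Cref{prop:Lcdn-cocyclic} (for surjectivity and the fact that the image lies in $\L(N)$). The only mild subtlety is to remember that $(c:d)_N$ in the statement is being parametrised by coprime pairs, which is why invoking \Cref{lem:cd-coprime} at the outset is useful to clarify that this parametrisation covers all of $\P(N)$.
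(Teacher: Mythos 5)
Your proof is correct and follows essentially the same route as the paper, which deduces the corollary immediately from \Cref{prop:Lcdn-cocyclic} together with the coset criterion of \Cref{lem:Gamma0N_cosets}; your substitution of \Cref{lem:same-lattice} for the latter in the well-definedness and injectivity steps is an interchangeable choice, since that lemma records the same equivalence $(c_1:d_1)_N=(c_2:d_2)_N \iff L(v_1;N)=L(v_2;N)$ in congruence language. The surjectivity argument via the parametrisation $\L(N)=\{\Z^2D_NV\mid V\in\Gamma\}$ and \Cref{prop:Lcdn-cocyclic} is exactly what the paper intends.
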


\begin{proof}
Immediate from~\Cref{lem:Gamma0N_cosets} and~\Cref{prop:Lcdn-cocyclic}.
\end{proof}

Recall that the cardinality of~$\P(N)$ is denoted~$\psi(N)$, which has
the formula~(\ref{eqn:psi-formula}).
\begin{corollary}
  For all~$N\ge1$,  $|\L(N)| = |\P(N)| = \psi(N)$.
\end{corollary}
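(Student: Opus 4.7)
The plan is essentially immediate: this corollary is a direct consequence of the bijection established in~\Cref{cor:P1N-LcdN-bijection}, together with the definition of~$\psi(N)$ given earlier in the paper.

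Specifically, I would first invoke \Cref{cor:P1N-LcdN-bijection}, which provides an explicit bijection~$(c:d)_N \mapsto L(c:d;N)$ from~$\P(N)$ to~$\L(N)$. Since the existence of a bijection between two finite sets implies they have the same cardinality, this yields $|\L(N)| = |\P(N)|$. Then I would recall that~$\psi(N)$ was \emph{defined} earlier (in~\Cref{sec:projective_coverings}) as~$|\P(N)|$, giving the second equality~$|\P(N)| = \psi(N)$. An explicit closed formula for this cardinality, namely $\psi(N) = N\prod_{p\mid N}(1+1/p)$, is then available via~\Cref{cor:psi-formula}, should it be needed downstream.

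There is no serious obstacle here; all the real content has already been packaged into the preceding results. The only point worth verifying is that~\Cref{cor:P1N-LcdN-bijection} genuinely provides a bijection (rather than merely a surjection or injection), but this is exactly what that corollary asserts, via the combination of~\Cref{lem:Gamma0N_cosets} (which handles injectivity through the correspondence between~$\Gamma_0(N)$-cosets and elements of~$\P(N)$) and~\Cref{prop:Lcdn-cocyclic} (which handles surjectivity by identifying~$L(v;N)$ with~$\Z^2 D_N V$ for any lift~$V$ of~$v$). Thus the proof can reasonably be a single sentence citing~\Cref{cor:P1N-LcdN-bijection}.
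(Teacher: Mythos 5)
Your proposal is correct and matches the paper's (implicit) argument: the corollary follows immediately from the bijection of~\Cref{cor:P1N-LcdN-bijection} together with the fact that~$\psi(N)$ is by definition the cardinality of~$\P(N)$, which is exactly why the paper states it without further proof. Nothing more is needed.
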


The following easy facts will be used repeatedly in our discussion of
the lattice covering problem.

\begin{corollary}
  \label{cor:unique-LcdN}
  \begin{enumerate}[(i)]
    \item Every primitive vector~$v$ belongs to a unique cocyclic
      lattice of each index~$N$, namely~$L(v;N)$.
    \item The primitive vectors~$v$ in~$L(c:d;N)$ are precisely the
      lifts of~$(c:d)_N$ to~$\Z^2$, and for each lift~$v$ we
      have~$L(v;N)=L(c:d;N)$.
  \end{enumerate}
\end{corollary}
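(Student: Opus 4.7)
The plan is to derive both parts directly from the bijection in \Cref{cor:P1N-LcdN-bijection} together with the equivalences of \Cref{lem:same-lattice} and the characterization of primitive vectors in \Cref{prop:Lcdn-cocyclic}. Since all the real work has been done in establishing that $L(v;N)$ only depends on the class $(c:d)_N$ and that this correspondence is a bijection with $\L(N)$, the corollary should be a short deduction.

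For part (i), I would start by observing that a primitive $v=(c,d)$ satisfies $\gcd(c,d)=1$, hence certainly $\gcd(c,d,N)=1$, so $(c:d)_N\in\P(N)$ is defined and $L(v;N)=L(c:d;N)$ is a cocyclic lattice of index~$N$ by \Cref{prop:Lcdn-cocyclic}. The membership $v\in L(v;N)$ is immediate from the definition since $v\wedge v=0$. For uniqueness, suppose $v\in L'$ for some $L'\in\L(N)$. By \Cref{cor:P1N-LcdN-bijection}, $L'=L(w;N)$ for a unique class $(w)_N\in\P(N)$, represented by some primitive $w$. Then the implication (iii)$\implies$(v) of \Cref{lem:same-lattice} gives $L(v;N)=L(w;N)=L'$, so $L'$ is determined.

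For part (ii), I would invoke \Cref{prop:Lcdn-cocyclic} which states that the primitive vectors of $L(c:d;N)$ are exactly the second rows of matrices in the coset $\Gamma_0(N)V$, where $V$ is any lift of $(c:d)_N$ to $\Gamma$. By the equivalence (ii)$\iff$(iii) in \Cref{lem:Gamma0N_cosets}, a primitive vector $v'=(c',d')$ is such a second row if and only if $(c':d')_N=(c:d)_N$, which is precisely what it means to be a lift of $(c:d)_N$ to $\Z^2$. Finally, for any such lift $v'$, the equivalence of (ii) and (v) in \Cref{lem:same-lattice} immediately yields $L(v';N)=L(c:d;N)$.

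There is really no hard step here: the corollary is a bookkeeping statement that repackages \Cref{cor:P1N-LcdN-bijection} in a form convenient for the covering problem. The only thing to be careful about is to avoid circularity, using \Cref{lem:same-lattice} (proved independently from the bijection) rather than citing \Cref{cor:P1N-LcdN-bijection} when we need $L(v_1;N)=L(v_2;N)$ from $v_1\in L(v_2;N)$, since \Cref{cor:P1N-LcdN-bijection} is itself derived from these equivalences.
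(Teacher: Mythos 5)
Your argument is correct and follows essentially the same route as the paper, whose entire proof is the one-line observation that both statements follow from \Cref{lem:same-lattice}; your part (i) is exactly that deduction, and your caution about not citing \Cref{cor:P1N-LcdN-bijection} circularly is well placed. The detour in part (ii) through \Cref{prop:Lcdn-cocyclic} and \Cref{lem:Gamma0N_cosets} is harmless but unnecessary, since \Cref{lem:same-lattice} already identifies the primitive vectors of $L(c:d;N)$ with the lifts of $(c:d)_N$ and gives $L(v;N)=L(c:d;N)$ directly.
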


\begin{proof}
  Both statements follow from~\Cref{lem:same-lattice}.
\end{proof}

Note that the first statement is a restatement of the fact
that~$\Gamma$ is the disjoint union of the cosets of~$\Gamma_0(N)$.

By definition, a lattice~$L$ is cocyclic if it contains at least one
primitive vector.  More is true:
\begin{lemma}
  \label{lem:primitive-basis}
  Every cocyclic lattice has a basis of primitive vectors.  Hence
  every cocyclic lattice is uniquely determined by the primitive
  vectors it contains.
\end{lemma}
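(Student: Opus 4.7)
The plan is to exploit the explicit description of cocyclic lattices established in \Cref{prop:Lcdn-cocyclic}: every cocyclic lattice $L$ of index $N$ can be written as $L = \Z^2 D_N V$ for some $V = \begin{pmatrix} a & b \\ c & d\end{pmatrix} \in \Gamma$, which supplies the basis $\{w, v\}$ where $w = (Na, Nb)$ and $v = (c, d)$. The second basis vector $v$ is automatically primitive, since $ad - bc = 1$, but $w$ is primitive only in the trivial case $N = 1$. The idea is therefore to replace $w$ by the vector $w' = w + v = (Na + c,\ Nb + d)$, which still pairs with $v$ to give a basis of $L$ (an elementary row operation on the basis matrix), and to verify that $w'$ is primitive.

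For the primitivity check: suppose a prime $p$ divides both coordinates of $w'$. Then $p$ divides
\[
d(Na + c) - c(Nb + d) = N(ad - bc) = N,
\]
so $p \mid N$. But then $p \mid Na + c$ together with $p \mid N$ forces $p \mid c$, and similarly $p \mid Nb + d$ forces $p \mid d$, contradicting the primitivity of $v$. Hence $\gcd(Na+c,\ Nb+d) = 1$, and $\{v, w'\}$ is the desired primitive basis.

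For the second sentence (uniqueness from primitive vectors), the first part does all the work: if two cocyclic lattices $L_1$ and $L_2$ contain exactly the same primitive vectors, then the primitive basis of $L_1$ supplied by the first part lies entirely inside $L_2$, whence $L_1 \subseteq L_2$; symmetry then gives equality.

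There is no real obstacle here; the content is a one-line row operation followed by a short $\gcd$ computation. The only mildly subtle point is choosing the right replacement for $w$ — it would be natural to try $w + tv$ for a general $t$ and search for a suitable $t$ via CRT, but the calculation shows that $t = 1$ always suffices, so no auxiliary lemma is required.
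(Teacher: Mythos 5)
Your proof is correct and follows essentially the same route as the paper: both rest on the normal form $L=\Z^2D_NV$ and the exhibition of an explicit primitive basis, the paper by transporting the primitive basis $\{(N,1),(2N,1)\}$ of the model lattice $N\Z\oplus\Z$ through the unimodular change of basis $V$ (which preserves primitivity), you by adding the primitive second row $v=(c,d)$ to the first row and verifying primitivity of $(Na+c,Nb+d)$ by the short gcd computation. The deduction of uniqueness from the primitive basis is the same in both, so nothing further is needed.
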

\begin{proof}
This is clear for the lattice $N\Z\oplus\Z$, for which such a basis is
given by $\{(N,1), (2N,1)\}$, and the general case follows by change
of basis in~$\Z^2$.
\end{proof}

Hence cocyclic lattices are uniquely determined either by their
index~$N$ and any one primitive vector~$v$ they contain,
as~$L(v;N)$ is the unique cocyclic lattice with this property, or by
the set of primitive vectors they contain.

If~$M,N$ are positive integers with $M\mid N$, then for
every~$L\in\L(N)$, there is unique~$L'\in\L(M)$ containing~$L$.  This
follows immediately from the fact that a cyclic group of order~$N$ has
a unique subgroup of index~$N/M$: the subgroup of~$\Z^2/L$ of index~$N/M$
is~$\Z^2/L'$.  If $v$ is any primitive vector in~$L$, then $L=L(v;N)$
and $L'=L(v;M)$.

\begin{lemma}
  \label{lem:new-lemma}
  Let $M,N\ge1$, let $L\in\L(M)$, and let $v$ be a primitive
  vector. Consider the statements
  \begin{enumerate}[(i)]
    \item $v\in L$;
    \item $L(v;N)\subseteq L$;
    \item $L$ contains all primitive vectors in $L(v;N)$.
  \end{enumerate}
  We have (ii)$\iff$(iii), and (ii)$\implies$(i). If, in addition,
  $M\mid N$, then also (i)$\implies$(ii), so that all three statements
  are equivalent.
\end{lemma}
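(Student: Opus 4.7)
The plan is to prove the three implications separately, pulling each from results already established.

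For (ii)$\iff$(iii), the forward direction is immediate from the definition of containment. For the backward direction, I would invoke \Cref{lem:primitive-basis}: since $L(v;N)$ is cocyclic, it has a basis of primitive vectors, all of which lie in $L$ by hypothesis (iii); hence $L(v;N)\subseteq L$, giving (ii). This is the step where I rely on the nontrivial fact that cocyclic lattices are spanned by their primitive vectors.

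For (ii)$\implies$(i), I simply note that $v\in L(v;N)$ for every $N\ge 1$ (as remarked just before \Cref{lem:same-lattice}), so any lattice containing $L(v;N)$ contains $v$.

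For the conditional implication (i)$\implies$(ii) under the hypothesis $M\mid N$, I would argue as follows. Since $L\in\L(M)$ is cocyclic of index $M$ and contains the primitive vector $v$, \Cref{cor:unique-LcdN}(i) forces $L=L(v;M)$. It then remains to show $L(v;N)\subseteq L(v;M)$, which is direct from the definition in~(\ref{eqn:LcdN-def}): if $w\in L(v;N)$ then $v\wedge w\equiv 0\pmod N$, and since $M\mid N$ this gives $v\wedge w\equiv 0\pmod M$, i.e.\ $w\in L(v;M)$.

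None of these steps looks like a serious obstacle; the only one that uses real content from earlier in the paper is (iii)$\implies$(ii), which depends on \Cref{lem:primitive-basis}. The divisibility hypothesis $M\mid N$ is used only in the last implication, and the preceding Corollary about the unique cocyclic lattice through a given primitive vector is what makes that implication clean.
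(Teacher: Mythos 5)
Your proposal is correct and follows essentially the same route as the paper's own proof: (ii)$\iff$(iii) via the primitive-vector basis of \Cref{lem:primitive-basis}, (ii)$\implies$(i) from $v\in L(v;N)$, and (i)$\implies$(ii) under $M\mid N$ by identifying $L=L(v;M)$ and using $L(v;M)\supseteq L(v;N)$. No gaps; nothing further to add.
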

\begin{proof}
(ii)$\iff$(iii) by the first statement in~\Cref{lem:primitive-basis},
  and (ii)$\implies$(i) is obvious; conversely, given (i), if~$M\mid
  N$, then~$L=L(v;M)\supseteq L(v;N)$.
\end{proof}

While the intersection of any two lattices is again a lattice, the
intersection of two cocyclic lattices may or may not be cocyclic.  We
say that two cocyclic lattices~\emph{intersect cyclically} if their
intersection is again cocyclic; otherwise they are~\emph{separated};
so two cocyclic lattices are separated if and only if their subsets of
primitive vectors are disjoint.

For example, the~$\psi(N)$ cocyclic lattices of index~$N$ are pairwise
separated, since by~\Cref{cor:unique-LcdN}, each primitive vector
belongs to exactly one of them.  At the other extreme, two cocyclic
lattices with coprime indices always intersect cyclically; this is
essentially a restatement of the homogeneous Chinese Remainder
Theorem~\Cref{prop:CRT-P1-full}.
\begin{proposition}\label{prop:CRT-lattice}
  Let~$N_1$, $N_2$ be coprime integers, and set $N=N_1N_2$.  There is
  a bijection between pairs~$(L_1,L_2)$ of lattices with
  indices~$[\Z^2:L_i]=N_i$ and lattices~$L$ of index~$N$, given by
  $(L_1,L_2)\mapsto L=L_1\cap L_2$. In this bijection, $L$ is cocyclic
  if and only if both the~$L_i$ are.

 If $L_i=L(c_i:d_i;N_i)$, then $L_1\cap L_2=L(c:d;N)$, where
 $(c:d)_N\in\P(N)$ is the common preimage of $(c_1:d_1;N_1)$
 and~$(c_2:d_2;N_2)$, which exists by~\Cref{prop:CRT-P1}.
\end{proposition}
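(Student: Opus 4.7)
The plan is to establish four things in turn: (i) that the map $(L_1,L_2)\mapsto L_1\cap L_2$ lands in the set of lattices of index~$N$; (ii) that it is a bijection; (iii) the preservation of cocyclicity in both directions; and (iv) the explicit formula in terms of elements of~$\P(N_i)$.

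For (i), I would first argue that $L_1+L_2=\Z^2$: the quotient $\Z^2/(L_1+L_2)$ is a quotient of both $\Z^2/L_1$ and $\Z^2/L_2$, so its order divides both $N_1$ and $N_2$ and is therefore~$1$. Then the standard index identity
\[
[\Z^2:L_1\cap L_2]\cdot[\Z^2:L_1+L_2]=[\Z^2:L_1]\cdot[\Z^2:L_2]
\]
gives $[\Z^2:L_1\cap L_2]=N_1N_2=N$. As a bonus, the diagonal map $\Z^2\to\Z^2/L_1\oplus\Z^2/L_2$ has kernel $L_1\cap L_2$ and image of size~$N$, so is surjective, yielding an isomorphism $\Z^2/(L_1\cap L_2)\cong\Z^2/L_1\oplus\Z^2/L_2$, which I will use later for cocyclicity.

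For (ii), I would construct the inverse. Given $L$ of index~$N$, the finite abelian group $\Z^2/L$ has coprime primary decomposition $A_1\oplus A_2$, where $A_i$ is the $N_i$-primary part (of order~$N_i$). Let $L_1$ and $L_2$ be the preimages in~$\Z^2$ of $A_2$ and~$A_1$ respectively; these are lattices of index~$N_1$ and~$N_2$, with $L_1\cap L_2/L=A_2\cap A_1=0$, so $L_1\cap L_2=L$. Conversely, starting from a pair $(L_1,L_2)$ and forming $L=L_1\cap L_2$, I note that $L_1/L$ is a subgroup of $\Z^2/L$ of order~$N_2$ all of whose elements have order dividing~$N_2$, so it must coincide with the $N_2$-primary part $A_2$; hence the inverse construction recovers~$L_1$, and similarly~$L_2$. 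This shows the two maps are mutually inverse.

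For (iii), the isomorphism $\Z^2/(L_1\cap L_2)\cong\Z^2/L_1\oplus\Z^2/L_2$ from step~(i) is a direct sum of groups of coprime orders, which is cyclic if and only if each summand is cyclic. This proves that $L=L_1\cap L_2$ is cocyclic if and only if both~$L_i$ are. Finally, for~(iv), given $L_i=L(c_i:d_i;N_i)$, \Cref{prop:CRT-P1} produces $(c:d)_N\in\P(N)$ mapping to $(c_i:d_i)_{N_i}$ under each reduction map. A vector $(x,y)\in\Z^2$ lies in $L(c:d;N)$ iff $xd\equiv yc\pmod{N}$, which by the integer CRT is equivalent to $xd\equiv yc\pmod{N_i}$ for $i=1,2$; and since $(c,d)$ and $(c_i,d_i)$ differ by a unit modulo~$N_i$, this is in turn equivalent to $xd_i\equiv yc_i\pmod{N_i}$ for both~$i$, i.e.\ $(x,y)\in L_1\cap L_2$. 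I expect the trickiest step to be the inverse construction in~(ii), specifically verifying that it actually returns the original pair $(L_1,L_2)$; the key is the uniqueness of the primary decomposition together with the order count on $L_1/L$.
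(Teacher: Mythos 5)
Your proof is correct and follows essentially the same route as the paper: show $L_1+L_2=\Z^2$, deduce $\Z^2/(L_1\cap L_2)\cong \Z^2/L_1\oplus\Z^2/L_2$ (which gives both the index count and the cocyclicity criterion), construct the inverse via the unique subgroups of coprime order in $\Z^2/L$, and identify $L_1\cap L_2$ with $L(c:d;N)$ using \Cref{prop:CRT-P1}. If anything, your treatment of the inverse map via the primary decomposition is slightly more careful than the paper's, which justifies the uniqueness of the index-$N_i$ lattices containing $L$ by asserting that $\Z^2/L$ is cyclic (not warranted for an arbitrary lattice $L$ of index $N$), whereas uniqueness of the primary (Hall) subgroups of an abelian group of order $N_1N_2$ suffices, exactly as in your order-counting argument.
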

\begin{proof}
  This is elementary group theory.  Given lattices~$L_i$ of
  index~$N_i$ respectively, the subgroup~$L_1L_2$ generated by
  the~$L_i$ is all of~$\Z^2$ since its index divides both~$N_i$.
  Hence, with~$L=L_1\cap L_2$,
  \begin{equation}
  \label{eq:decomp}
  \Z^2/L \cong \Z^2/L_1 \times \Z^2/L_2,
  \end{equation}
  so~$L$ has index~$N$.  For the reverse map, $L$ maps to~$(L_1,L_2)$
  where each~$L_i$ is the unique lattice of index~$N_i$
  containing~$L$: this exists and is unique since~$\Z^2/L$ is a cyclic
  abelian group of order~$N=N_1N_2$ so has unique subgroups of
  index~$N_1$ and~$N_2$.

  In the isomorphism~(\ref{eq:decomp}), $\Z^2/L$ is cyclic if and only if both
  factors on the right are, since their orders are coprime.

  The last part is clear.
\end{proof}

We now give a precise general condition for the intersection of two
cocyclic lattices to be cocyclic.

\begin{proposition}
\label{prop:CRT-lattice-full}
Let~$L_i\in\L(N_i)$ for~$i=1,2$. Set $M=\gcd(N_1,N_2)$
and~$N=\lcm(N_1,N_2)$.  Then the following are equivalent:
\begin{enumerate}[(i)]
\item $L_1$ and~$L_2$ are contained in the same lattice of index~$M$;
\item $L_1\cap L_2$ is cocyclic (that is, $L_1$ and~$L_2$ intersect
  cyclically).
\end{enumerate}
When these hold, the cocyclic intersection~$L_1\cap L_2$ has
index~$N$.

In more detail if~$L_i=L(c_i:d_i;N_i)$, then (i) holds if and only if
$(c_1:d_1)_M=(c_2:d_2)_M$, and then the intersection in (ii)
is~$L(c:d;N)$ where~$(c:d)_N$ reduces to~$(c_i:d_i)_{N_i}$
for~$i=1,2$.
\end{proposition}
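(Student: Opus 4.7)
The plan is to translate the lattice statement into the projective language via the bijection $\P(N) \leftrightarrow \L(N)$ of \Cref{cor:P1N-LcdN-bijection}, and then invoke the generalized homogeneous Chinese Remainder Theorem \Cref{prop:CRT-P1-full}. Writing $L_i = L(c_i:d_i; N_i)$, I first observe that the unique lattice of index $M$ containing $L_i$ is $L(c_i:d_i; M)$ (pick any primitive $v \in L_i$; then $L_i = L(v; N_i)$ and, since $M \mid N_i$, the containing lattice is $L(v; M) = L(c_i:d_i; M)$). Via \Cref{cor:P1N-LcdN-bijection}, condition~(i) is therefore equivalent to $(c_1:d_1)_M = (c_2:d_2)_M$, which by \Cref{prop:CRT-P1-full} is equivalent to the existence of a common preimage $(c:d)_N \in \P(N)$.

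For the implication (i)$\implies$(ii), I would fix such a common preimage $(c:d)_N$ and argue that $L(c:d; N) = L_1 \cap L_2$. The key inclusion $L(c:d; N) \subseteq L_i$ comes from \Cref{lem:new-lemma}: since $N_i \mid N$ and the primitive vector $(c,d)$ lies in $L_i = L(c_i:d_i; N_i)$ (its reduction in $\P(N_i)$ equals $(c_i:d_i)_{N_i}$ by construction), the equivalence (i)$\iff$(ii) of that lemma yields $L(c:d; N) \subseteq L_i$. Hence $L(c:d; N) \subseteq L_1 \cap L_2$, which already shows $L_1 \cap L_2$ contains the primitive vector $(c,d)$ and is thus cocyclic, proving~(ii). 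For the index, each inclusion $L_1 \cap L_2 \subseteq L_i$ forces $N_i \mid [\Z^2 : L_1 \cap L_2]$, hence $N = \lcm(N_1, N_2)$ divides $[\Z^2 : L_1 \cap L_2]$; combined with $L(c:d; N) \subseteq L_1 \cap L_2$ and $[\Z^2 : L(c:d; N)] = N$, this forces $L_1 \cap L_2 = L(c:d; N)$ of index exactly $N$.

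The converse (ii)$\implies$(i) is immediate: if $L_1 \cap L_2$ is cocyclic, choose any primitive $v \in L_1 \cap L_2$; by \Cref{lem:same-lattice}, $L_i = L(v; N_i)$ for both $i$, so $(v)_M = (c_i:d_i)_M$ for each $i$, yielding $(c_1:d_1)_M = (c_2:d_2)_M$ and equivalently exhibiting $L(v; M)$ as a single lattice of index $M$ containing both $L_1$ and $L_2$.

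Main obstacle: there is essentially none once the earlier machinery is deployed; the argument is bookkeeping on top of \Cref{cor:P1N-LcdN-bijection} and \Cref{prop:CRT-P1-full}. The only mildly subtle point is that $(c,d) \in L_i$ alone is not strong enough to conclude $L(c:d; N) \subseteq L_i$ — one genuinely needs the divisibility $N_i \mid N$ to invoke the non-trivial direction of \Cref{lem:new-lemma}, and that is exactly what ensures the intersection has index $N$ rather than something larger.
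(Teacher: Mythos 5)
Your proof is correct and follows essentially the same route as the paper, which simply notes the result is immediate from \Cref{prop:CRT-P1-full} together with the bijection of \Cref{cor:P1N-LcdN-bijection} applied to the moduli $N_1$, $N_2$, $M$, $N$; you have merely written out the bookkeeping (via \Cref{lem:new-lemma}, \Cref{lem:same-lattice}, and the index comparison) that the paper leaves implicit. The only point worth making explicit is that the common preimage $(c:d)_N$ should be represented by a coprime pair (available by \Cref{lem:cd-coprime}) so that $(c,d)$ is genuinely a primitive vector when you invoke those lemmas.
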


\begin{proof}
This is immediate from~\Cref{prop:CRT-P1-full} and the bijection
between cocyclic lattices of index~$m$ and elements of~$\P(m)$
for~$m=N_1$, $N_2$, $M$, and~$N$.
\end{proof}

\begin{corollary}
  \label{cor:contain-or-separate}
Let~$L_i\in\L(N_i)$ for~$i=1,2$.  If~$N_1\mid N_2$ then
either~$L_2\subseteq L_1$, or~$L_1$ and~$L_2$ are separated.
\end{corollary}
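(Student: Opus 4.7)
The plan is to reduce this immediately to \Cref{prop:CRT-lattice-full}. Under the hypothesis $N_1 \mid N_2$, the auxiliary quantities collapse nicely: $M = \gcd(N_1,N_2) = N_1$ and $N = \lcm(N_1,N_2) = N_2$. This is the key simplification that makes the dichotomy crisp.

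First I would invoke \Cref{prop:CRT-lattice-full} to split into two cases according to whether $L_1$ and $L_2$ intersect cyclically or not. In the separated case there is nothing to prove, so the content is in the cyclic-intersection case. Here the proposition says that $L_1$ and $L_2$ must be contained in a common lattice of index $M = N_1$. But the only lattice of index $N_1$ containing $L_1$ is $L_1$ itself (since $L_1$ already has index $N_1$), so this common lattice must be $L_1$, giving $L_2 \subseteq L_1$ directly.

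As a consistency check, one can also see this from the "index equals $N$" conclusion of \Cref{prop:CRT-lattice-full}: when the intersection is cocyclic, it has index $N = N_2$, so $L_1 \cap L_2$ is a sublattice of $L_2$ of the same index, hence equal to $L_2$, i.e.\ $L_2 \subseteq L_1$. Either phrasing works.

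There is essentially no obstacle; the corollary is really just the specialization of \Cref{prop:CRT-lattice-full} to the divisibility case, and the main "work" is just recognizing that the unique lattice of index $N_1$ containing $L_1$ is $L_1$ itself. I would write the proof in two or three lines.
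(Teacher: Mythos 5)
Your proof is correct and is essentially the paper's intended argument: the corollary is stated as an immediate consequence of \Cref{prop:CRT-lattice-full}, specialized to $M=N_1$, $N=N_2$, exactly as you do. Both of your phrasings (the unique index-$N_1$ lattice containing $L_1$ is $L_1$ itself, or comparing indices of $L_1\cap L_2$ and $L_2$) are valid ways to finish.
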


As a special case of~\Cref{prop:CRT-lattice-full}, we see that given
any two cocyclic lattices $L_1=L(v_1;N)$ and $L_2=L(v_2;N)$ of the
same index, they are both contained in a lattice of index $M=\gcd(N,
v_1\wedge v_2)$, namely~$L(v_1,M)$. This is the same as~$L(v_2,M)$
since $v_1\wedge v_2\equiv0\pmod{M}$; it is the sum $L_1+L_2$, and is
the smallest lattice containing both~$L_1$ and~$L_2$. In the next
section we will need a similar construction of the sum of an arbitrary
number of lattices of the same index.
\begin{proposition}
\label{prop:gcd-lattices}
Let $N\ge1$, and for $1\le i\le k$ let $L_i=L(v_i,N)\in\L(N)$, where
the $v_i$ are primitive vectors.  Let
\[
M = \gcd(N, \{v_1 \wedge v_i : 1 \leq i \leq k\}),
\]
and set $L=L(v_1,M)$.  Then for all $i\le k$ we have $L=L(v_i;M)$
and~$L\supseteq L_i$, and for every cocyclic lattice~$L'$ we have
\[
L'\supseteq L \iff L'\supseteq L_i\ (\forall i).
\]
(That is, $L$ is the sum of all the $L_i$.)
\end{proposition}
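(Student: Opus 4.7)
The plan is to verify each assertion using \Cref{lem:same-lattice} together with the elementary observation that $L(v;N)\subseteq L(v;M)$ whenever $M\mid N$ (immediate from the definitions, since $v\wedge w\equiv 0\pmod N$ implies $v\wedge w\equiv 0\pmod M$).

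First I would establish both initial claims at once. By the very definition of $M$, we have $M\mid v_1\wedge v_i$ for each~$i$, so \Cref{lem:same-lattice} applied with modulus~$M$ gives $L=L(v_1;M)=L(v_i;M)$. Since $M\mid N$, the observation above then yields $L_i=L(v_i;N)\subseteq L(v_i;M)=L$.

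For the universal property, the forward implication $L'\supseteq L\Rightarrow L'\supseteq L_i$ (for all $i$) is immediate from $L\supseteq L_i$. For the converse, suppose $L'$ is cocyclic of index~$N'$ and contains every~$L_i$. Index-divisibility forces $N'\mid N$, and since each primitive vector $v_i\in L_i$ lies in~$L'$, \Cref{cor:unique-LcdN} identifies $L'=L(v_i;N')$ for every~$i$. Hence $L(v_1;N')=L(v_i;N')$, and \Cref{lem:same-lattice} applied with modulus~$N'$ gives $N'\mid v_1\wedge v_i$ for all~$i$. Thus $N'$ divides both~$N$ and every $v_1\wedge v_i$, so $N'\mid M$ by the definition of~$M$. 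The observation above, with $v=v_1$, then produces $L=L(v_1;M)\subseteq L(v_1;N')=L'$, as required.

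The only mild subtlety is the converse direction, where one must translate the abstract containment $L'\supseteq L_i$ into divisibility information about~$N'$; the clean resolution is to use \Cref{cor:unique-LcdN} to identify $L'$ explicitly as $L(v_i;N')$, after which everything reduces to the dictionary between divisibility of $v_1\wedge v_i$ and equality of classes in~$\P(N')$ furnished by \Cref{lem:same-lattice}. No substantial obstacle arises.
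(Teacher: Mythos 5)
Your proposal is correct and follows essentially the same route as the paper: both establish $L=L(v_i;M)\supseteq L_i$ from $M\mid v_1\wedge v_i$ and $M\mid N$ via \Cref{lem:same-lattice}, and for the converse both identify $L'$ as $L(v_i;N')$ for its index $N'$ using the unique-cocyclic-lattice fact, deduce $N'\mid v_1\wedge v_i$ and hence $N'\mid M$, and conclude $L'\supseteq L$. No meaningful differences.
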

Note that by transitivity of the relation $v\wedge w\equiv0\pmod{M}$
on primitive vectors, here we could have defined $M$ to be the $\gcd$
of~$N$ and $v_i\wedge v_j$ for \emph{all} $i\ne j$.
\begin{proof}
For all~$i$, since $M\mid v_1\wedge v_i$, we have $L(v_1;M)=L(v_i;M)$
(by~\Cref{lem:same-lattice}), and $L = L(v_i;M)\supseteq L(v_i;N)=L_i$ since
$M\mid N$. From these facts, the forward implication is clear.

For the backward implication, let~$L'$ be a cocyclic lattice of index~$M'$ containing
all the~$L_i$. Then $M'\mid N$, and $L'$ contains all the primitive
vectors~$v_i$, so $L'=L(v_i;M')$ for all~$i$.  Hence, for all~$i$, we
have $L(v_1;M')=L(v_i;M')$, so $v_1\wedge v_i\equiv0\pmod{M'}$.  So
$M'\mid M$, and hence $L'=L(v_i;M')\supseteq L(v_i;M)=L$ as claimed.
\end{proof}

\Cref{lem:index-p} and the bijection in~\Cref{cor:P1N-LcdN-bijection}
imply the following.
\begin{corollary}
  \label{cor:lattice-index-p}
  Let~$p$ be prime.  Each cocyclic lattice~$L$ of index~$N$ has $p+1$
  cocyclic sublattices of relative index~$p$ when~$p\nmid N$, and~$p$
  such sublattices when~$p\mid N$.

  If~$L=L(c:d;N)$, these are the lattices~$L(c_i:d_i;Np)$, where
  $(c_i:d_i)_{Np}$ run through the~$p$ or~$p+1$ lifts of~$(c:d)_N$
  from~$\P(N)$ to~$\P(Np)$.  Every primitive vector in~$L$ belongs to
  exactly one of these sublattices.
\end{corollary}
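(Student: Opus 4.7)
The plan is to translate the statement via the bijection $\P(m)\leftrightarrow\L(m)$ of \Cref{cor:P1N-LcdN-bijection} into a counting problem for fibres of the reduction map $\P(Np)\to\P(N)$, and then invoke \Cref{lem:index-p}.

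First I would observe that a cocyclic sublattice $L'$ of $L$ of relative index $p$ satisfies $[\Z^2:L']=[\Z^2:L]\cdot[L:L']=Np$, so $L'\in\L(Np)$, and corresponds under the bijection to some $(c':d')_{Np}\in\P(Np)$. The key claim is
\[
L(c':d';Np)\subseteq L(c:d;N) \iff (c':d')_{Np} \text{ reduces to } (c:d)_N.
\]
For the forward direction, I would pick any primitive vector $v'\in L(c':d';Np)$. By \Cref{cor:unique-LcdN} it also represents $(c':d')_{Np}$, and its image in $\P(N)$ equals both $(c':d')_N$ (by \Cref{lem:reduction-P1N-P1M}) and $(c:d)_N$ (since $v'\in L(c:d;N)$, by \Cref{lem:same-lattice}). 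For the reverse direction, given the reduction condition, pick any primitive $v'\in L(c':d';Np)$; then $v'$ represents $(c:d)_N$ in $\P(N)$, so by \Cref{lem:same-lattice} we have $v'\in L(c:d;N)=L$, and since $N\mid Np$, \Cref{lem:new-lemma} upgrades this to $L(v';Np)=L(c':d';Np)\subseteq L$.

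Combining this equivalence with \Cref{lem:index-p}, the sublattices in question are in bijection with the fibre of $\P(Np)\to\P(N)$ over $(c:d)_N$, which has cardinality $p$ when $p\mid N$ and $p+1$ when $p\nmid N$. For the final statement, any primitive vector $v\in L$ satisfies $L(v;N)=L$ by \Cref{cor:unique-LcdN}, and $L(v;Np)$ is the unique cocyclic lattice of index $Np$ containing $v$; by the equivalence just proved, $L(v;Np)$ is one of the listed sublattices, since the class of $v$ in $\P(Np)$ reduces to $(c:d)_N$. Uniqueness among the listed sublattices is automatic, as distinct cocyclic lattices of the same index are pairwise separated. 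There is no real obstacle here; the corollary amounts to translating \Cref{lem:index-p} into the lattice language via the bijection, with the one bit of actual content being the characterisation of containment in terms of the reduction map.
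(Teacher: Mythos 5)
Your proof is correct and follows essentially the same route as the paper, which derives the corollary directly from \Cref{lem:index-p} together with the bijection of \Cref{cor:P1N-LcdN-bijection} (with \Cref{lem:same-lattice}, \Cref{lem:new-lemma} and \Cref{cor:unique-LcdN} supplying the containment characterisation you spell out). Nothing further is needed.
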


Note that while every lattice~$L$, being a free abelian group of
rank~$2$ and hence isomorphic to~$\Z^2$, always has exactly~$p+1$
sublattices of relative index~$p$, if $L$ is cocyclic of index
divisible by~$p$, one of these sublattices is not cocyclic.  For
example, $L(0:1;p) = p\Z\oplus\Z$ contains the non-cocyclic lattice
$p\Z^2$ with index~$p$, as well as the cocyclic lattices~$L(pt:1;p^2)$
for~$0\le t<p$.

\begin{corollary}
  \label{cor:index-p-intersection}
If~$L_i\in\L(N_i)$ for~$i=1,2$ intersect cyclically, and~$p$ is a prime
such that~$\ord_p(N_1)\ge\ord_p(N_2)$, then every cocyclic lattice of
index~$p$ in~$L_1$ also intersects~$L_2$ cyclically.
\end{corollary}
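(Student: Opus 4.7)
The plan is to reduce the statement to \Cref{prop:CRT-lattice-full}, which characterises cyclic intersection as joint containment in a single cocyclic lattice of index equal to the gcd of the two indices. Write $M=\gcd(N_1,N_2)$ and let $K$ be the unique cocyclic lattice of index $M$ containing both $L_1$ and $L_2$, which exists by the cyclic intersection hypothesis applied via \Cref{prop:CRT-lattice-full}. Given any cocyclic sublattice $L_1' \subseteq L_1$ of relative index $p$, so $L_1' \in \L(pN_1)$, I want to show $L_1'$ and $L_2$ also intersect cyclically. By \Cref{prop:CRT-lattice-full} this amounts to showing that $L_1'$ and $L_2$ are contained in a common cocyclic lattice of index $M' := \gcd(pN_1,N_2)$.

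The key arithmetic step is to observe that under the hypothesis $\ord_p(N_1)\ge\ord_p(N_2)$, we have $M' = M$. Indeed, at the prime $p$, $\ord_p(pN_1) = \ord_p(N_1)+1 > \ord_p(N_2)$, so $\ord_p(M') = \ord_p(N_2) = \min(\ord_p(N_1),\ord_p(N_2)) = \ord_p(M)$. At every other prime $q$, the $p$-factor is irrelevant and $\ord_q(M') = \min(\ord_q(N_1),\ord_q(N_2)) = \ord_q(M)$. Hence $M'=M$.

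With $M'=M$ in hand, the conclusion is immediate: $L_1' \subseteq L_1 \subseteq K$ and $L_2 \subseteq K$, so $K$ itself serves as a common cocyclic lattice of index $M=M'$ containing both $L_1'$ and $L_2$. Applying \Cref{prop:CRT-lattice-full} in the reverse direction then shows that $L_1'\cap L_2$ is cocyclic.

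There is no real obstacle here; the only subtlety is the index calculation, and specifically the need for the hypothesis $\ord_p(N_1)\ge\ord_p(N_2)$, which is exactly what guarantees that passing from $N_1$ to $pN_1$ does not enlarge the gcd with $N_2$. Without that hypothesis one would have $M'=pM$, and the containment $L_1'\subseteq K$ (of index $M$) would no longer give the required containment in a lattice of index $M'$, so the argument would break down precisely at the step where one reuses $K$.
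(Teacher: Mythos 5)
Your proof is correct and is essentially the paper's argument: the paper also deduces the corollary immediately from \Cref{prop:CRT-lattice-full} via the observation that $\gcd(pN_1,N_2)=\gcd(N_1,N_2)$ under the hypothesis $\ord_p(N_1)\ge\ord_p(N_2)$. You have simply written out in detail the reuse of the common lattice $K$ of index $M$, which the paper leaves implicit.
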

\begin{proof}
Immediate from~\Cref{prop:CRT-lattice-full}
since~$\gcd(N_1p,N_2)=\gcd(N_1,N_2)$.
\end{proof}


\section{Coverings, minimal coverings and the covering problem}
\label{sec:covering}
We recall the definition of a covering in~\Cref{def:covering}.  We start with a simple but important criterion for covering.

\begin{lemma}
\label{lem:primitive-cover}
A collection~$\C$ of lattices is a covering if and only if every
\emph{primitive} vector of $\Z^2$ is contained in one of the lattices
in~$\C$.
\end{lemma}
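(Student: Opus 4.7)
The plan is to verify the two directions separately, with the forward direction being essentially a tautology and the backward direction reducing to the observation that every nonzero integer vector is an integer multiple of a primitive vector, together with the fact that lattices are subgroups (hence closed under integer multiplication).

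First I would dispense with the forward implication: if $\C$ is a covering of $\Z^2$, then in particular every primitive vector of $\Z^2$ lies in some $L_i$, because primitive vectors are elements of $\Z^2$. No work is required.

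For the backward implication, suppose that every primitive vector of $\Z^2$ lies in some lattice of $\C$. Let $w \in \Z^2$ be arbitrary. If $w = 0$, then $w$ belongs to every lattice in $\C$ (each~$L_i$ is a subgroup of~$\Z^2$, hence contains~$0$), and in particular to any one of them (noting $\C$ is nonempty since it covers~$\Z^2$). If $w \neq 0$, write $w = kv$ where $k = \gcd$ of the coordinates of~$w$ (a positive integer) and $v \in \Z^2$ is primitive. By hypothesis, there is an index~$i$ with $v \in L_i$, and then $w = kv \in L_i$ because $L_i$ is a subgroup of~$\Z^2$.

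There is no substantive obstacle here; the only thing to watch out for is the handling of the zero vector, and the implicit but necessary assumption that $\C$ is a nonempty collection (which is implicit in the definition of a covering, since the empty union is not~$\Z^2$).
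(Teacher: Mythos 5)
Your proof is correct and uses essentially the same idea as the paper: each lattice, being a subgroup, is closed under integer scalar multiples, and every nonzero vector is such a multiple of a primitive vector, so covering the primitive vectors suffices. (Your aside that $\C$ is nonempty ``since it covers $\Z^2$'' is slightly circular in the backward direction, but nonemptiness follows anyway from the hypothesis, since the primitive vector $(1,0)$ must lie in some lattice of $\C$.)
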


\begin{proof}
  Each lattice in~$\C$, and hence their union, is closed under taking
  scalar multiples, so if the union contains all primitive vectors
  then it contains all vectors.
\end{proof}

The following observation explains our focus on cocyclic lattices.

\begin{corollary}
  \label{cor:all-cocyclic}
  All lattices in an irredundant covering are cocyclic.
\end{corollary}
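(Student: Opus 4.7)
The plan is to prove the contrapositive: if some lattice $L_i$ in $\C$ fails to be cocyclic, then $\C$ cannot be irredundant, because $L_i$ is redundant and can be removed.

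First I would unpack the definition of cocyclic from~\Cref{def:lattice}: if $L_i\in\C$ is not cocyclic, then $L_i\subseteq c\Z^2$ for some integer~$c>1$. Consequently, every vector in~$L_i$ has both coordinates divisible by~$c$, so $L_i$ contains no primitive vectors whatsoever.

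Next, since $\C$ is assumed to be a covering, \Cref{lem:primitive-cover} tells us that every primitive vector of~$\Z^2$ lies in some member of~$\C$; but since no primitive vector lies in~$L_i$, every primitive vector must lie in some $L_j$ with $j\ne i$. Applying \Cref{lem:primitive-cover} in the reverse direction to the collection $\C\setminus\{L_i\}$, we conclude that this proper subcollection is itself a covering of~$\Z^2$. This contradicts the irredundancy of~$\C$, so every $L_i$ must be cocyclic.

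There is no real obstacle here: the whole argument is essentially a one-line consequence of \Cref{lem:primitive-cover}, once one observes the elementary fact that a lattice contained in $c\Z^2$ for $c>1$ contributes nothing to the covering of primitive vectors. The only ``step'' of substance is recognising that the failure of cocyclicity is exactly what makes a lattice invisible to the primitive-vector criterion.
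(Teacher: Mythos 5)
Your proof is correct and is essentially the paper's own argument: a non-cocyclic lattice contains no primitive vectors, so the remaining lattices already contain every primitive vector and hence form a covering by \Cref{lem:primitive-cover}, contradicting irredundancy. The only difference is that you spell out the step ``not cocyclic $\Rightarrow$ contained in $c\Z^2$ with $c>1$ $\Rightarrow$ no primitive vectors,'' which the paper leaves implicit.
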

\begin{proof}
  If one lattice in the covering is not cocyclic, then it contains no
  primitive vectors.  So all primitive vectors of~$\Z^2$ are contained
  in the union of the remaining lattices, which thus form a cover by~\Cref{lem:primitive-cover}, contradicting irredundancy.
\end{proof}

\begin{remark}
   Hence the covering condition~(\ref{eqn:covering}) is equivalent to
   the statement that $\Gamma$ is the union of the corresponding
   cosets of $\Gamma_0(N_i)$, where $N_i=[\Z^2:L_i]$.
\end{remark}

In view of~\Cref{cor:all-cocyclic}, we need only consider coverings by
cocyclic lattices in what follows.  Since a set~$\{L(c_i:d_i;N_i)\}$
of cocyclic lattices covers~$\Z^2$ if and only if the associated
residue classes~$R(c_i:d_i;N_i)$ cover~$\P(\Z)$, we have a
correspondence between lattice coverings of~$\Z^2$ and the projective
covering congruence problem, covering~$\P(\Z)$ by projective (or
homogeneous) residue classes.

Correspondingly, a solution to the projective covering congruence
problem is strongly minimal if the associated residue classes are
disjoint, forming a partition of~$\P(\Z)$.

\begin{definition}
The \emph{size} of a covering~$\C$ is the cardinality~$|\C|$.
\end{definition}

The simplest construction of a lattice covering is the following:

\begin{definition}
For all~$N\ge1$, the set~$\L(N)$ of all cocyclic lattices of index~$N$
is a strongly minimal covering of size~$\psi(N)$, which we call
the~\emph{full index-$N$ covering}.
\end{definition}

That~$\L(N)$ is a strongly minimal covering follows
from~\Cref{cor:unique-LcdN}~(i).  Clearly, the only covering of size~$1$ is the \emph{trivial} covering
of~$\Z^2$ by itself.  We will see later (\Cref{prop:size-up-3}) that there are no coverings of
size~$2$, and only one of size~$3$, namely $\L(2)$.  The number of
coverings of size~$n$ grows quickly as a function of~$n$ (see~\cite{BBMST}, which covers the analogous case of classical covering systems).

\begin{example}
  The full index-$2$ covering is
  \[
  \L(2) = \{L(0:1;2), L(1:0;2), L(1:1;2)\}.
  \]
  Each primitive vector~$(x,y)$ belongs to exactly one of these,
  depending on whether $x$ is even, $y$ is even, or both are odd.

  Every nontrivial minimal covering must have size at least~$3$, since
  the vectors $(1,0)$, $(0,1)$, and~$(1,1)$ must belong to different
  lattices as each pair generates~$\Z^2$.  We will see later
  (\Cref{prop:size-up-3}) that this is the only covering of size~$3$.
\end{example}

\begin{remark}
  It is clear that, for every covering there is at least one
  irredundant subcollection which is still a covering: simply remove
  any lattice of the covering which is contained in the union of the
  others (if any), and repeat until there are no such redundant
  lattices.  This irredundant subcollection is not, in general,
  unique: for example, $\L(2)\cup\L(3)$ is a covering of size~$7$
  which contains at least two irredundant (and even minimal)
  irredundant coverings, namely~$\L(2)$ and~$\L(3)$.
\end{remark}

Let~$V(N)$ denote any set of primitive vectors of size~$\psi(N)$, one
from each lattice in~$\L(N)$.  Then~$\L(N)=\{L(v;N)\mid v\in V(N)\}$.
These sets (which are of course far from unique) may be used to test
whether a collection of lattices is a covering, and if so, whether the
covering is minimal, as we now demonstrate.  We start with a definition.

\begin{definition}
For a finite collection~$\C$ of cocyclic lattices, we define
its~\emph{index lcm} to be
\[
\lcm(\C)=\lcm(\{[\Z^2:L]\mid L\in\C\}).
\]
\end{definition}

\begin{proposition}
  \label{prop:covering-criterion}
  Let $\C$ be a collection of lattices, and $N\ge1$.
  \begin{enumerate}[(i)]
    \item If every~$L\in\L(N)$ is contained in at least one lattice
      in~$\C$, then $\C$ is a covering.
    \item Suppose that~$\lcm(\C)\mid N$.
      \begin{enumerate}
        \item If~$\C$ is a covering, then every~$L\in\L(N)$
          is contained in at least one lattice in~$\C$ (that is, the
          converse to (i) holds).
        \item If~$\C$ is not a covering, then for all primitive~$v\not \in\cup_{L\in\C} L$, the lattice~$L(v; N)$ is separated from all the
          lattices in~$\C$.
        \item If every~$v\in V(N)$ belongs to at
          least one lattice in~$\C$, then $\C$ is a covering.
      \end{enumerate}
  \end{enumerate}
\end{proposition}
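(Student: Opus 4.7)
The proof plan is to handle each part in turn, reducing to the covering criterion of \Cref{lem:primitive-cover} together with the "divides" form of \Cref{lem:new-lemma} and the cyclic-intersection criterion of \Cref{prop:CRT-lattice-full}. The hypothesis $\lcm(\C)\mid N$ in part~(ii) is what lets us freely apply those lemmas, since every $L\in\C$ then has index dividing~$N$.

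For part~(i), I would argue as follows. Given a primitive vector $v\in\Z^2$, the lattice $L(v;N)\in\L(N)$ is by hypothesis contained in some $L\in\C$; since $v\in L(v;N)\subseteq L$, every primitive vector is covered by~$\C$, and \Cref{lem:primitive-cover} gives that $\C$ is a covering. For part~(ii)(a), let $L'\in\L(N)$ and pick a primitive~$v\in L'$, so $L'=L(v;N)$ by \Cref{cor:unique-LcdN}. Since $\C$ is a covering, some $L\in\C$ contains~$v$. Writing $M=[\Z^2:L]$, we have $M\mid\lcm(\C)\mid N$, and \Cref{lem:new-lemma}~(i)$\Rightarrow$(ii) applied with this divisibility yields $L(v;N)\subseteq L$, i.e.\ $L'\subseteq L$, as required; this also shows that (i) and (ii)(a) together give a biconditional under the divisibility hypothesis.

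For part~(ii)(b), suppose $v$ is primitive with $v\notin\bigcup_{L\in\C}L$, and assume for contradiction that $L(v;N)$ intersects some $L=L(w;M)\in\C$ cyclically, where $M\mid N$. By \Cref{prop:CRT-lattice-full}, $L$ and $L(v;N)$ are then contained in a common cocyclic lattice of index $\gcd(N,M)=M$; the unique such lattice containing $L$ is $L$ itself, and the unique one containing $L(v;N)$ is $L(v;M)$ (as noted just before \Cref{lem:new-lemma}), so $L=L(v;M)$. But then $v\in L(v;M)=L$, contradicting the choice of~$v$. Finally, for part~(ii)(c), I observe that $\L(N)=\{L(v;N):v\in V(N)\}$ by the definition of~$V(N)$. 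If each $v\in V(N)$ belongs to some $L\in\C$, then by the same application of \Cref{lem:new-lemma} used in~(ii)(a) (valid because every index of a lattice in~$\C$ divides~$\lcm(\C)\mid N$), we get $L(v;N)\subseteq L$ for each such~$v$. Thus every element of~$\L(N)$ is contained in some lattice in~$\C$, and part~(i) concludes that $\C$ is a covering.

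The only step that requires genuine care is~(ii)(b); everything else is a direct invocation of the divisibility form of \Cref{lem:new-lemma}. The key point in~(ii)(b) is extracting from "cyclic intersection" the precise statement that $L$ and $L(v;N)$ sit inside the \emph{same} lattice of index~$M$, so that uniqueness of $M$-index super-lattices forces $L=L(v;M)$ and then $v\in L$. Once this is phrased in terms of primitive vectors via \Cref{cor:unique-LcdN}, no computation is needed.
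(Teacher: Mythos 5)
Your proof is correct and follows essentially the same route as the paper: reduce everything to the fact that when all indices in~$\C$ divide~$N$, each $L\in\L(N)$ is either contained in or separated from each lattice of~$\C$, then invoke the primitive-vector criterion. The only cosmetic difference is that you cite \Cref{lem:new-lemma} and \Cref{prop:CRT-P1-full}/\Cref{prop:CRT-lattice-full} directly where the paper packages the same dichotomy via \Cref{cor:contain-or-separate}.
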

\begin{proof}
For part~(i), every vector is contained in at least one lattice
in~$\L(N)$, since $\L(N)$ is a covering, and hence in at least one
lattice in~$\C$.

Now assume that~$\lcm(\C)\mid N$. Then,
by~\Cref{cor:contain-or-separate}, for every~$L\in\L(N)$ and~$L'\in\C$
we have (*): either $L\subseteq L'$ or $L,L'$ are separated.

For part~(ii)(a), let~$L\in\L(N)$.  If $L$ is not contained in any
lattice in~$\C$, then it is separated from each of them by (*); but then the
primitive vectors in~$L$ do not belong to any lattice in~$\C$, so~$\C$
is not a covering.

Part~(ii)(b) follows from~(*).  For
part~(ii)(c), we use~\Cref{lem:new-lemma} (namely (i) implies (ii)) and part~(i) of this lemma.
\end{proof}

Let~$\C$ be a covering. By definition, $\C$ is minimal if and only if
every lattice~$L\in\C$ is \emph{minimal in~$\C$}, meaning that for
cocyclic~$L'\subsetneq L$, $(\C\setminus L)\cup\{L'\}$ is not a
covering. We can check this condition for each~$L\in\C$, using a
simple numerical criterion also involving~$V(N)$ where~$N=\lcm(\C)$:
\begin{proposition}
  \label{prop:minimality-criterion}
Let~$\C$ be an irredundant covering with~$\lcm(\C)=N$, and
let~$L\in\C$ have index~$M$. Let~$S=\{v\in V(N)\mid v\notin L'
\quad(\forall L'\in\C, L'\not= L)\}$.  Then $S\not=\emptyset$, and
fixing one element~$v\in S$, set
\[
D = \gcd(N, \{v \wedge w : w \in S\})
\]
and $L'=L(v;D)$.  Then
\begin{enumerate}[(i)]
\item $L'$ does not depend on the choice of~$v\in
  S$.
\item $L'\subseteq L$.
\item $L$ is minimal in~$\C$ if and only if~$L'=L$, if and only if~$D=M$.
\item $\C'=(\C\setminus L)\cup\{L'\}$ is an irredundant covering such
  that~$L'$ is minimal in~$\C'$, with $|\C'|=|\C|$ and
  $\lcm(\C')=\lcm(\C)$.
\end{enumerate}
\end{proposition}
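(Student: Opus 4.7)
The plan is to verify that $S \ne \emptyset$, then dispatch (i)--(iv) in order, with (iii) carrying the main content.

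First, if $S$ were empty, every $v \in V(N)$ would lie in some lattice of $\C \setminus \{L\}$; since $\lcm(\C \setminus \{L\}) \mid N$, \Cref{prop:covering-criterion}(ii)(c) would make $\C \setminus \{L\}$ a cover, contradicting irredundancy of $\C$. For (i), I would invoke the transitivity comment after \Cref{prop:gcd-lattices}: for any $w_1, w_2 \in S$, $D \mid v \wedge w_1$ and $D \mid v \wedge w_2$, so by transitivity of $\wedge \equiv 0 \pmod{D}$ (\Cref{lem:same-lattice}) we obtain $D \mid w_1 \wedge w_2$. Hence $D$ equals the manifestly symmetric $\gcd(N, \{w_1 \wedge w_2 : w_1, w_2 \in S\})$, and for any two choices $v_1, v_2 \in S$, \Cref{lem:same-lattice} gives $L(v_1; D) = L(v_2; D)$. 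For (ii), each $w \in S$ lies in $L$ (since $\C$ covers and no other lattice of $\C$ contains $w$); because $M \mid N$, \Cref{lem:new-lemma} yields $L(w;N) \subseteq L$, and since $L'$ is the sum of these lattices by \Cref{prop:gcd-lattices}, we get $L' \subseteq L$.

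For (iii), the strategy is to characterize $L'$ as the smallest cocyclic lattice containing the set $U$ of primitive vectors in $L$ that lie in no other lattice of $\C$. For $U \subseteq L'$: given $u \in U$, \Cref{prop:covering-criterion}(ii)(a) places $L(u;N) \in \L(N)$ inside some lattice of $\C$, and \Cref{cor:contain-or-separate} together with $u \in U$ forces that lattice to be $L$; applying the same separation argument against each other lattice of $\C$ shows that every primitive vector of $L(u;N)$ lies in $U$. In particular the unique representative $v_u \in V(N) \cap L(u;N)$ lies in $S$, and since $u \wedge v_u \equiv 0 \pmod{N}$ and $v_u \wedge v \equiv 0 \pmod{D}$, transitivity gives $u \wedge v \equiv 0 \pmod{D}$, i.e.\ $u \in L'$. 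Conversely, let $L''$ be any cocyclic lattice with $U \subseteq L''$; for each $w \in S$, the same separation argument shows all primitive vectors of $L(w;N)$ lie in $U \subseteq L''$, so by \Cref{lem:primitive-basis} we get $L(w;N) \subseteq L''$, and then $L' \subseteq L''$ by \Cref{prop:gcd-lattices}. Finally, $L$ is minimal in $\C$ precisely when no proper cocyclic sublattice of $L$ contains $U$, which by the characterization is equivalent to $L' = L$, equivalently $D = M$.

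For (iv), $\C'$ covers by the inclusion $U \subseteq L'$ from (iii); $|\C'| = |\C|$ because $L' = L''' \in \C \setminus \{L\}$ would make $\C \setminus \{L\}$ a cover, contradicting irredundancy; and $\lcm(\C') = N$ follows from $M \mid D \mid N$ together with $N = \lcm(\lcm(\C \setminus \{L\}), M)$. Irredundancy is then routine: any old witness showing $L''' \in \C \setminus \{L\}$ is not redundant in $\C$ lies outside $L$ hence outside $L' \subseteq L$, so it remains a witness in $\C'$; and any $v \in S \subseteq L'$ witnesses that $L'$ is not redundant in $\C'$. For the minimality of $L'$ in $\C'$, I would re-apply the construction to $(\C', L')$: since $\C' \setminus \{L'\} = \C \setminus \{L\}$ and $\lcm(\C') = N$, the analogously defined set and gcd coincide with $S$ and $D$, the construction returns $L'$, and (iii) applied inside $\C'$ yields minimality. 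The main obstacle is the reverse inclusion $L' \subseteq L''$ in (iii); it hinges on the upgrade from ``$w \in U$'' to ``every primitive vector of $L(w;N)$ lies in $U$'', for which the separation dichotomy of \Cref{cor:contain-or-separate} applied to each lattice of $\C \setminus \{L\}$ is essential.
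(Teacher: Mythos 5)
Your proof is correct and is essentially the paper's argument in a different packaging: the paper's central claim, that for cocyclic $L''\subseteq L$ the collection $(\C\setminus L)\cup\{L''\}$ covers if and only if $L''\supseteq L'$, amounts to your characterization of $L'$ as the smallest cocyclic lattice containing the set $U$ of primitive vectors covered only by $L$, and both arguments run on the same ingredients (\Cref{prop:covering-criterion}, \Cref{lem:new-lemma}/\Cref{lem:primitive-basis}, \Cref{prop:gcd-lattices}, \Cref{cor:contain-or-separate}) and finish (iv) identically by re-applying the construction to $(\C',L')$. The only differences are cosmetic: you prove (ii) directly from $S\subseteq L$ and \Cref{prop:gcd-lattices} rather than by taking $L''=L$ in the claim, and you make explicit some points the paper leaves implicit (that $S\neq\emptyset$ and that $L'\notin\C\setminus\{L\}$, so $|\C'|=|\C|$).
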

As remarked after~\Cref{prop:gcd-lattices}, in the definition of~$D$,
instead of fixing one~$v\in S$, we could more symmetrically
define~$D=\gcd(N, \{v \wedge w : v,w\in S\})$.
\begin{proof}
  For (i), we apply ~\Cref{lem:same-lattice}.  Indeed, since $D\mid v\wedge w$ by construction, this gives $L(v;D)=L(w;D)$ for all~$w\in S$, thus completing the proof of (i).


  Let~$L''\subseteq L$ be a cocyclic lattice, and consider~$\C'' =
  (\C\setminus L)\cup\{L''\}$. We claim that $\C''$ is a covering
  if and only if~$L''\supseteq L'$.

  Suppose first that~$\C''$ is a covering.  For~$w\in S$, set $L_w=L(w;N)$.  None of the lattices
  in~$\C\setminus\{L\}$ contain any~$w\in S$ (by definition of~$S$),
  and hence none contain any primitive vectors in any of the~$L_w$,
  by~\Cref{lem:new-lemma} (using the fact that all lattices in~$\C$
  have index dividing~$N$).  Then $L''$ must contain
  all primitive vectors in all the lattices~$L_w$ for~$w\in S$.  By~\Cref{lem:new-lemma}
  applied to~$L''$ (and specifically by the implication
  (iii)$\implies$(ii) of that lemma, which does not have any condition
  on the index of~$L''$) we see that~$L''\supseteq L_w$ for all~$w\in
  S$.  As~$S$ is not empty, ~\Cref{prop:gcd-lattices} implies
  that~$L''\supseteq L(v;D)=L'$.

  Conversely, suppose that~$L''\supseteq L'$. Since
  $L'=L(w;D)$ for all $w \in S$, we deduce that $w\in L''$ for all $w \in S$. Hence $S\subseteq
  L''$.  Also, the index of~$L''$ divides that of~$L'$, which is~$D$,
  and hence also divides~$N$.  By~\Cref{prop:covering-criterion}~(ii),
  $\C''$ is a covering.  We have now established the claim.

  Since~$\C$ itself is a covering, taking~$L''=L$ in the claim shows
  that~$L\supseteq L'$, proving~(ii), and also implying that~$M\mid
  D$.  Now it follows that~$L$ is minimal in~$\C$ if and only if~$L$
  is the only lattice~$L''$ satisfying~$L'\subseteq L''\subseteq L$,
  proving~(iii).

  Since~$M\mid D$ and~$D\mid N$, we see that~$\lcm(\C')=N$,
  where~$\C'=(\C\setminus L) \cup\{L'\}$.  By construction,~$\C'$ is a
  covering, and is clearly also irredundant.  Applying
  parts~(i)--(iii) to $\C'$ and~$L'$ in place of~$\C$ and~$L$ (so that
  $N$ and~$S$ are unchanged), we see that~$L'$ is minimal in~$\C'$,
  completing the proof of~(iv).
\end{proof}

As well as giving a criterion for a covering being minimal in
part~(iii), part~(iv) of~\Cref{prop:minimality-criterion} may be used
to give an algorithm which, starting with any irredundant covering,
successively replaces any lattices which are not minimal in the
current covering with a sublattice as prescribed as in part~(iv).  At
each stage, the size and~$\lcm$ of the covering remain the same.
After a finite number of steps this procedure results in a minimal
covering:
\begin{corollary}
  \label{cor:minimisation}
  Let~$\C$ be an irredundant covering.  Successively replacing each
  lattice~$L\in\C$ which is non-minimal in~$\C$ by the
  sublattice~$L'\subseteq L$ defined
  in~\Cref{prop:minimality-criterion}~(iv) results, after a finite
  number of steps, in a minimal covering~$\C_{\min}$, which
  satisfies~$|\C_{\min}|=|\C|$ and~$\lcm(\C_{\min})=\lcm(\C)$.  Each
  lattice in~$\C_{\min}$ is a sublattice of one of the lattices
  in~$\C$.
\end{corollary}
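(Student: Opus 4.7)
The plan is to apply~\Cref{prop:minimality-criterion}(iv) at each replacement step, with the main task being to establish termination. Part~(iv) already guarantees that every replacement preserves irredundancy, size~$|\C|$ and index~$\lcm(\C)$, and produces a replacing sublattice $L'\subseteq L$ which is minimal in the resulting covering. Granting termination, these invariants propagate to $\C_{\min}$, and a straightforward induction---each step shrinks a single position---shows every lattice in $\C_{\min}$ is a sublattice of one in~$\C$.

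The key ingredient to be proved is a \emph{monotonicity} property: if $L\in\C$ is minimal in $\C$ and $\C'$ is obtained from $\C$ by replacing some \emph{other} lattice $L_j$ by a cocyclic proper sublattice $L_j'$, then $L$ is still minimal in $\C'$. To verify this, set $N=\lcm(\C)=\lcm(\C')$ and let $S,S'\subseteq V(N)$ be the witness sets associated to $L$ in $\C$ and in $\C'$ respectively, as in~\Cref{prop:minimality-criterion}. Since $L_j'\subseteq L_j$, a primitive vector outside every lattice of $\C\setminus\{L\}$ is automatically outside every lattice of $\C'\setminus\{L\}$, so $S\subseteq S'$. Fixing $v\in S$ and comparing the associated indices,
\[
D' = \gcd(N, \{v\wedge w : w\in S'\}) \mid \gcd(N, \{v\wedge w : w\in S\}) = D,
\]
so $L(v;D')\supseteq L(v;D)=L$, where the final equality expresses the minimality of $L$ in $\C$. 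Since $L(v;D')\subseteq L$ by part~(ii) of~\Cref{prop:minimality-criterion} applied in $\C'$, we obtain $L(v;D')=L$, i.e.~$L$ is minimal in $\C'$.

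With monotonicity in hand, termination is immediate: whenever we perform a replacement at some position, the replacing lattice is minimal in the new covering by~(iv), and monotonicity ensures it remains minimal under every subsequent replacement (which affects a different position). Hence each position is replaced at most once, so the procedure halts after at most~$|\C|$ steps, at which point no lattice is non-minimal and $\C_{\min}$ is by definition minimal. The monotonicity claim is the only non-trivial step; the rest amounts to tracking invariants through~\Cref{prop:minimality-criterion}(iv), with the independence from the choice of representative $v\in S$ following from part~(i) of the same proposition.
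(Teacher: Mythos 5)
Your proof is correct, but it settles termination by a different route than the paper. The paper's own proof is a one-line descent argument: every replacement exchanges a non-minimal lattice for a strictly smaller cocyclic sublattice whose index still divides $\lcm(\C)$, and since there are only finitely many cocyclic lattices of index dividing $\lcm(\C)$, the procedure cannot run forever; the remaining assertions are read off from \Cref{prop:minimality-criterion}~(iv) exactly as you do. What you prove instead is the monotonicity statement -- a lattice that is minimal in the current covering stays minimal after any other lattice is replaced -- which the paper only asserts in the remark following \Cref{cor:minimisation} and leaves to the reader. Your argument via $S\subseteq S'$, $D'\mid D$, and parts (i)--(iii) of \Cref{prop:minimality-criterion} is a correct proof of that remark, and it buys more than the paper's proof does: an explicit bound of at most $|\C|$ replacement steps rather than bare finiteness. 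One small caveat: as stated, your monotonicity lemma allows $L_j$ to be replaced by an \emph{arbitrary} cocyclic proper sublattice $L_j'$, and then the asserted equality $\lcm(\C)=\lcm(\C')$ need not hold (the lcm can grow), which would break the comparison of $S$ and $S'$ inside the same $V(N)$; you should either add the hypothesis $[\Z^2:L_j']\mid\lcm(\C)$ or restrict to the replacements actually used, namely those produced by \Cref{prop:minimality-criterion}~(iv), for which the lcm is preserved. Since those are the only replacements the procedure performs, this is an imprecision of statement rather than a gap in the argument.
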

\begin{proof}
  The finiteness of the procedure follows from the fact that there are
  only finitely many cocyclic lattices of index
  dividing~$\lcm(\C)$. The rest is clear.
\end{proof}

\begin{remark}
In fact, one can see that any lattice which is minimal in the original
covering~$\C$ remains minimal throughout the procedure, so the number
of steps is bounded by the size of the original covering; we leave the
details to the reader.
\end{remark}

\begin{remark}
At each stage of this minimisation procedure, if there is more than
one non-minimal lattice, then a choice must be made as to which one to
replace with a minimal sublattice.  The minimal covering~$\C_{\min}$
constructed depends on these choices, so is not (in general) uniquely
determined by the original covering~$\C$. 
\end{remark}

\begin{remark}
  The literature concerning systems of covering congruences usually
  considers irredundant coverings rather than minimal ones in our
  sense.  \Cref{cor:minimisation} shows that this makes no difference
  when considering the relation between the size of a covering and its
  index~$\lcm$, as we do in~\Cref{sec:lower-bounds} below.  Our
  systematic enumeration of coverings in~\Cref{sec:small-size} only
  includes minimal coverings; it would be straightforward to extend
  this to also include non-minimal irredundant coverings.

  One property of coverings which is not preserved by the minimisation
  procedure of~\Cref{cor:minimisation} is that of having
  \emph{distinct indices}, which much of the literature on covering
  congruences is concerned with (following a question originally posed
  by~Erd\H{o}s).  Typically, these are not minimal.
\end{remark}

\begin{example}\label{ex:min-not-strong}
Not all minimal coverings are strongly minimal.  Consider
\[
\C = \{L(1:0;2), L(1:0;3), L(0:1;3)\} \cup \{L(c:1;6) \mid c \in \{\pm1,\pm2\}\}.
\]
This is a covering, of size~$7$: a vector~$(x,y)\in\Z^2$ is contained
in
\begin{itemize}
  \item $L(1:0;2)$ if and only if~$2\mid y$;
  \item $L(1:0;3)$ if and only if~$3\mid y$;
  \item $L(0:1;3)$ if and only if~$3\mid x$;
  \item $L(c:1;6)$ for~$c\in\{\pm1,\pm2\}$ if and only if $\gcd(x,3)=1$
    and $\gcd(y,6)=1$, where~$c$ satisfies $cy\equiv x\pmod{6}$.
\end{itemize}
It is not strongly minimal, since the primitive vector $(1,6)$ belongs
to both $L(1:0;2)$ and $L(1:0;3)$, but is minimal, as can be checked
using~\Cref{prop:minimality-criterion}.

In~\Cref{thm:all-upto-6} we will show that all minimal coverings of
size~$n\le6$ are strongly minimal, so this example of size~$7$ is a
smallest example of a minimal covering which is not strongly minimal.
\end{example}

The full index-$N$ coverings are not the only strongly minimal
coverings.  In the next section we define an operation called
\emph{refinement} which takes coverings to new coverings (consisting
of more lattices), and strongly minimal coverings to new strongly
minimal coverings.  While not all strongly minimal coverings may be so
obtained from the trivial covering of~$\Z^2$
(see~\Cref{ex:strong-not-refinement}), all minimal coverings of size
up to~$6$ are obtained this way (see~\Cref{thm:all-7}).

The following inequality and equality will be helpful in finding all
coverings of a given size~$n$, at least when~$n$ is small.

\begin{definition}
The \emph{weight} of~$L\in\L(N)$ is $\wt(L)=1/\psi(N)$. The \emph{weight} of a (finite) collection~$\C$ of cocyclic lattices is $\wt(\C)=\sum_{L\in\C}\wt(L)$.
\end{definition}

We may interpret the weight of a lattice~$L$ as a \emph{density}.  Recall that the density of a subset~$S\subseteq\Z^2$ is defined to be
the quantity~$\rho(S)$ given by following limit, if it exists:
\[
\rho(S) =
\lim_{B \rightarrow \infty} \frac{|\{ S  \cap [-B, B]^2\}|}{|\{\Z^2 \cap [-B, B]^2 \}|}.
\]
Denote the subset of primitive vectors in~$S \subseteq \Z^2$ by~$S_{\prim}$.  For example, it is well known that~$\rho(\Z^2_{\prim})=1/\zeta(2)$.  We also define the \emph{relative density}
\[
\rho_{\prim}(S) = \rho(S_{\prim})/\rho(\Z^2_{\prim}),
\]
of the primitive vectors in~$S$, relative to the density of all
primitive vectors.  

When $S$ is defined by congruence conditions, it is often the case
that~$\rho(S)$ is equal to an infinite product of local $p$-adic
densities, which are given for each prime~$p$ by the $p$-adic Haar
measure of the subset of~$\Z_p^2$ cut out by the same conditions.  A sufficient condition for this to happen is that the local conditions are \emph{admissible} as
defined in~\cite[Definition 4]{CremonaSadek}.  For example, a
vector~$(a,b)\in\Z^2$ is primitive if and only if for all primes~$p$
we have~$(a,b)\not\equiv(0,0)\pmod{p}$, and these conditions are
admissible (see~\cite[Example 3]{CremonaSadek}); the local density
is~$1-1/p^2$, and hence the global density is~$1/\zeta(2)$.

Changing finitely many local conditions does not affect admissibility:
this is Lemma~3.3 of~\cite{CremonaSadek}.  Hence we can
compute~$\rho(L_{\prim})$ for any cocyclic lattice~$L=L(c:d)\in\L(N)$
(where~$\gcd(c,d)=1$) by adding the condition~$ad\equiv bc\pmod{N}$ to
the primitivity condition, noting that the condition is vacuous except
at the finitely many primes dividing~$N$.  This allows us to prove the
following.
\begin{proposition}
  \label{prop:weight=density}
  Let~$L$ be a cocyclic lattice in~$\Z^2$ of index~$N$.  Then
  \[
  \rho_{\prim}(L) = 1/\psi(N) = \wt(L).
  \]
\end{proposition}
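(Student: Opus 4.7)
My plan is to carry out the admissibility / local-density program hinted at in the paragraph preceding the proposition. Writing $L = L(c:d;N)$ with $\gcd(c,d) = 1$, the set $L_{\prim}$ is cut out by (a) primitivity at every prime $p$ (i.e.\ $(x,y) \not\equiv (0,0) \pmod p$), and (b) the congruence $cy \equiv dx \pmod N$, which by the Chinese Remainder Theorem factors into the system $cy \equiv dx \pmod{p^{e}}$ at each $p$ with $p^e \| N$. Thus (b) modifies only the local conditions at primes dividing $N$, so by \cite[Example~3, Lemma~3.3]{CremonaSadek} the combined system is admissible and $\rho(L_{\prim})$ factors as a product of local $p$-adic densities.

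At primes $p \nmid N$ the local condition is just primitivity mod $p$, giving the factor $1 - 1/p^2$, identical to the corresponding factor for $\Z^2_{\prim}$; these cancel in the ratio defining $\rho_{\prim}(L)$. The work is at primes $p$ with $p^e \| N$ for some $e \geq 1$. Set $L_p = \{(x,y) \in \Z_p^2 : cy \equiv dx \pmod{p^e}\}$; since $\gcd(c,d,p) = 1$, solving for one coordinate in terms of the other (say $y$ in terms of $x$ when $p \nmid d$) shows $L_p$ has Haar measure $p^{-e}$. The non-primitive subset of $L_p$ consists of those $(x,y) = (px', py')$ with $(px',py') \in L_p$, equivalently pairs $(x',y')$ satisfying $cy' \equiv dx' \pmod{p^{e-1}}$; this set is $p$ times a $\Z_p$-sublattice of measure $p^{-(e-1)}$ and hence has measure $p^{-(e+1)}$. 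Subtracting produces the local factor $p^{-e}(1 - 1/p)$.

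Assembling the pieces and invoking~\Cref{cor:psi-formula} at the end gives
\[
\rho_{\prim}(L) \;=\; \prod_{p^e \| N} \frac{p^{-e}(1 - 1/p)}{1 - 1/p^2} \;=\; \prod_{p^e \| N} \frac{p^{-e}}{1 + 1/p} \;=\; \frac{1}{N \prod_{p \mid N}(1 + 1/p)} \;=\; \frac{1}{\psi(N)},
\]
as required. The only real subtlety is verifying admissibility, which reduces immediately to the cited lemma; the remainder is a direct $p$-adic Haar measure computation.
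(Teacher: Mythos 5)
Your proof is correct and follows essentially the same route as the paper's: admissibility via \cite[Lemma~3.3]{CremonaSadek}, a local $p$-adic density computation at each prime dividing $N$ giving the factor $p^{-e}(1-1/p)=\varphi(p^e)/p^{2e}$, and the product formula. You have merely written out explicitly the ``simple calculation'' that the paper leaves to the reader, so no further changes are needed.
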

\begin{proof}
It suffices to show that $\rho(L_{\prim}) = 1/(\zeta(2)\psi(N))$. Let
$L=L(c:d;N)$ with $c,d$ coprime.  For each prime~$p$
let~$k=\ord_p(N)$. A simple calculation shows that the $p$-adic
density of $\{(a,b)\in\Z_p^2\mid (a,b)\not\equiv(0,0)\pmod{p},
ad\equiv bc\pmod{N}\}$ is $\varphi(p^k)/p^{2k} =
(1-1/p^2)/\psi(p^k)$. The result follows by the product formula.
\end{proof}

\begin{proposition}
  \label{prop:weight-formula}
Every covering~$\C$ satisfies the \emph{weight inequality}
\begin{equation}\label{eqn:weight-inequality}
  \wt(\C) \ge 1,
\end{equation}
with equality if and only if~$\C$ is strongly minimal.  Thus a
strongly minimal covering $\C$ satisfies the \emph{weight formula}
\begin{equation}\label{eqn:weight-formula}
  \wt(\C) = 1.
\end{equation}
\end{proposition}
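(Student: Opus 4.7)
The plan is to invoke Proposition~\ref{prop:weight=density}, which reinterprets $\wt(L_i) = 1/\psi(N_i)$ as the relative density $\rho_{\prim}(L_i)$ of the primitive part of $L_i$. This reduces the weight inequality to a density-subadditivity statement over the primitive parts of the lattices in $\C$, and the strong-minimality condition becomes the statement that these primitive parts partition $\Z^2_{\prim}$.

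First I would introduce, for each primitive $v\in\Z^2$, the multiplicity $f(v) = \#\{i : v \in L_i\}$, which satisfies $f(v) \ge 1$ by Lemma~\ref{lem:primitive-cover} because $\C$ covers $\Z^2$. Summing indicator functions over a box $[-B,B]^2$ gives
\begin{equation*}
\sum_{i=1}^n \#\bigl((L_i)_{\prim} \cap [-B,B]^2\bigr) \;=\; \sum_{v \in \Z^2_{\prim} \cap [-B,B]^2} f(v) \;\ge\; \#\bigl(\Z^2_{\prim} \cap [-B,B]^2\bigr).
\end{equation*}
Dividing by $\#(\Z^2 \cap [-B,B]^2)$ and letting $B \to \infty$, the right-hand side tends to $\rho(\Z^2_{\prim}) = 1/\zeta(2)$, and by Proposition~\ref{prop:weight=density} the left-hand side tends to $\sum_i \rho((L_i)_{\prim}) = \wt(\C)/\zeta(2)$, which gives~\eqref{eqn:weight-inequality}.

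Next, for the equality case, observe that $\C$ being strongly minimal is exactly the condition $f \equiv 1$ on $\Z^2_{\prim}$. When that holds, the display above is an equality for every $B$, so $\wt(\C) = 1$ follows in the limit. Conversely, if $\C$ is not strongly minimal, some primitive $v_0$ lies in two distinct lattices $L_i, L_j \in \C$; then $L_i \cap L_j$ is a cocyclic lattice (since it contains the primitive vector $v_0$) of some index $N' \ge 1$, and Proposition~\ref{prop:weight=density} applied to this intersection gives $\rho_{\prim}(L_i \cap L_j) = 1/\psi(N') > 0$. Since $f \ge 2$ on the positive-density set $(L_i)_{\prim} \cap (L_j)_{\prim} = (L_i \cap L_j)_{\prim}$, the same counting argument yields $\wt(\C) \ge 1 + 1/\psi(N') > 1$ in the limit, contradicting equality.

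The main technical point to handle carefully is the existence and termwise limit of the densities involved: this follows from the admissibility framework of~\cite{CremonaSadek} already invoked in the proof of Proposition~\ref{prop:weight=density}, since each set $(L_i)_{\prim}$ and each nonempty $(L_i \cap L_j)_{\prim}$ is cut out by admissible local conditions, ensuring that its density exists and equals the corresponding product of local $p$-adic densities.
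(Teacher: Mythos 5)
Your proposal is correct and follows essentially the same route as the paper: it invokes \Cref{prop:weight=density} to convert weights into relative densities, gets equality from disjointness of the primitive parts in the strongly minimal case, and gets strict inequality otherwise from the cocyclic intersection $L_i\cap L_j$ having positive relative density. Your multiplicity function $f(v)$ and box-counting merely make explicit the ``additivity of density'' step that the paper states directly, so the two arguments are the same in substance.
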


\begin{proof}
Let~$\C$ be a strongly minimal covering.  Then, by definition of
strong primitivity, $\Z^2_{\prim}$ is the disjoint union of the
sets~$L_{\prim}$ for~$L\in\C$.  By the additivity of density we have
$1=\rho_{\prim}(\Z^2)=\sum_{L\in\C}\rho_{\prim}(L)=\wt(\C)$
by~\Cref{prop:weight=density}.

If $\C$ is not strongly minimal, then at least one pair~$L,L'\in\C$ has
cocyclic intersection, so the sets~$L_{\prim}$ and~$L'_{\prim}$ are
not disjoint. Their intersection~$(L\cap L')_{\prim}$ has relative
density~$\rho_{\prim}(L\cap L') = \wt(L\cap L')$ which is strictly
positive, and hence~$\wt(\C) \ge 1 + \wt(L\cap L') > 1$.
\end{proof}


For example, the full index-$N$ covering consists of~$\psi(N)$
lattices each of which has weight~$1/\psi(N)$, so the weight of this
strongly minimal covering is indeed~$1$.  Another way to
prove~\Cref{prop:weight-formula} without using the language of
densities is to compare the weight of a covering~$\C$ with that of the
strongly minimal covering~$\L(N)$, where~$N=\lcm(\C)$,
using~\Cref{prop:covering-criterion}. We leave the details to the
reader.

The weight inequality in~\Cref{prop:weight-formula} easily implies the
claims made earlier about minimal coverings of size at most~$3$:
\begin{proposition}
\label{prop:size-up-3}
  The only minimal covering of size~$1$ is the trivial covering (which
is the full index-$1$ covering).  There is no minimal covering of
size~$2$. The only minimal covering of size~$3$ is the full index-$2$
covering.
\end{proposition}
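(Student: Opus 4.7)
The plan is to use the weight inequality from \Cref{prop:weight-formula} together with the explicit formula $\psi(N) = N\prod_{p\mid N}(1+1/p)$ from \Cref{cor:psi-formula}. The key numerical input is that $\psi(1)=1$, $\psi(2)=3$, and $\psi(N) \ge 4$ for all $N \ge 3$ (a quick check: $\psi(3)=4$, $\psi(4)=6$, and for $N\ge 3$ one has $\psi(N)\ge N\ge 3$, with equality only when $N$ has no odd prime factor larger than $2$, forcing $N=2$). In particular, for any cocyclic lattice $L$ of index $N\ge 2$ we have $\wt(L)=1/\psi(N)\le 1/3$.

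First I would dispose of the case where some $L\in\C$ equals $\Z^2$ itself (index $N=1$): then $L$ alone covers $\Z^2$, so by irredundancy $\C=\{\Z^2\}$, which is the trivial (full index-$1$) covering of size $1$. This handles the size-$1$ claim, and reduces the size-$2$ and size-$3$ cases to coverings all of whose lattices have index $\ge 2$, so each contributing weight $\le 1/3$.

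For size $2$, summing two weights each at most $1/3$ gives $\wt(\C)\le 2/3 < 1$, contradicting the weight inequality; so no such covering exists. For size $3$, the weight inequality forces $\wt(\C)\ge 1$, but the sum of three terms each at most $1/3$ is at most $1$, so we must have equality throughout. This forces each $\wt(L_i)=1/3$, i.e.\ each $L_i$ has index exactly $2$; and equality in the weight inequality means (by \Cref{prop:weight-formula}) that $\C$ is strongly minimal. Since there are only $\psi(2)=3$ cocyclic lattices of index $2$, namely the three elements of $\L(2)$, we must have $\C=\L(2)$, the full index-$2$ covering.

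The only step requiring any thought is confirming $\psi(N)\ge 4$ for $N\ge 3$, which follows immediately from the multiplicative formula; the rest is a short accounting argument using the weight inequality and the already-established fact that irredundant coverings consist of cocyclic lattices (\Cref{cor:all-cocyclic}).
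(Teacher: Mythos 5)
Your proposal is correct and follows essentially the same route as the paper: rule out $\Z^2$ itself for sizes $n>1$, then apply the weight inequality of \Cref{prop:weight-formula} with the bound $\wt(L_i)\le1/3$ coming from $\psi(N)\ge3$ for $N\ge2$ (and $\psi(N)\ge4$ for $N\ge3$) to exclude size $2$ and force $\C=\L(2)$ for size $3$. The minor garble in your parenthetical about when $\psi(N)=N$ is harmless, since $\psi(N)>N$ for all $N>1$ gives the needed $\psi(N)\ge4$ for $N\ge3$ anyway.
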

\begin{proof}
The case of size~$1$ is clear.  Minimal coverings of size~$n>1$
clearly cannot include~$\Z^2$ itself, so every lattice~$L_i$ in the
covering has index~$N_i\ge2$, with $\psi(N_i)\ge3$.  Now
(\ref{eqn:weight-inequality}) cannot hold with~$n=2$, and with $n=3$
it can only hold when all~$N_i=2$, so equality holds and the covering
is the full index-$2$ covering.
\end{proof}

The weight equation already implies a finiteness result for strongly
minimal coverings.  Proving that there are only finitely many irredundant 
coverings of size~$n$ is considerably harder:
see~\Cref{cor:finitely-many-minimal}.

\begin{theorem}
  The number of strongly minimal coverings of each size~$n$ is finite.
\end{theorem}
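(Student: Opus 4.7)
The plan is to leverage the weight formula of \Cref{prop:weight-formula}. Any strongly minimal covering $\C$ of size $n$ satisfies
\[
\sum_{L \in \C} \frac{1}{\psi([\Z^2:L])} = 1,
\]
so if $N_1, \ldots, N_n$ denote the indices of the lattices in $\C$ we obtain a solution in positive integers to $\sum_{i=1}^n 1/\psi(N_i) = 1$. Since by \Cref{cor:P1N-LcdN-bijection} there are only $\psi(N)$ cocyclic lattices of each fixed index $N$, the theorem will follow once I show that the multiset $\{N_1,\dots,N_n\}$ can take only finitely many values as $\C$ varies over strongly minimal coverings of size $n$.

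I would prove this last statement by induction on $n$ in the following slightly more general form: for every positive rational $q$ and every $n \ge 1$, only finitely many $n$-tuples $(N_1, \ldots, N_n)$ of positive integers satisfy $\sum_{i=1}^n 1/\psi(N_i) = q$. The base case $n=1$ is immediate, because $\psi(N_1) = 1/q$ combined with the bound $\psi(N) \ge N$ (clear from \cref{eqn:psi-formula}, since each factor $1+1/p \ge 1$) forces $N_1 \le 1/q$.

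For the inductive step I would single out an index $i_0$ minimising $\psi(N_{i_0})$; then $n/\psi(N_{i_0}) \ge q$, which gives $N_{i_0} \le \psi(N_{i_0}) \le n/q$, leaving only finitely many choices for $N_{i_0}$. For each such choice the remaining $n-1$ indices satisfy $\sum_{i \ne i_0} 1/\psi(N_i) = q - 1/\psi(N_{i_0})$; either this right-hand side is positive and the inductive hypothesis applies with $n-1$ and the new rational $q' = q - 1/\psi(N_{i_0})$ in place of $n$ and $q$, or it is zero or negative and there are no further solutions to count. The argument is essentially an Egyptian-fraction style bound applied to the weight identity, and I do not foresee a serious obstacle; the only mild subtlety is that the \emph{ordering} one wants for the induction is by the $\psi$-values rather than the $N_i$ themselves, since $\psi$ is not monotonic.
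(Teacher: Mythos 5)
Your argument is correct and follows essentially the same route as the paper: both start from the weight formula $\sum_{L\in\C} 1/\psi([\Z^2:L]) = 1$, run an Egyptian-fraction style induction that bounds the smallest term, and then use the fact that there are only $\psi(N)$ cocyclic lattices of each index $N$. The only (minor) difference is that you bound the indices directly via $\psi(N)\ge N$, which lets you skip the paper's separate lemma that $\psi(N)=M$ has only finitely many solutions for each $M$ (proved there in the style of the analogous statement for Euler's totient).
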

The proof follows from the weight equation~(\ref{eqn:weight-formula})
and the following two lemmas.
\begin{lemma}
  For fixed~$n\ge1$, and fixed~$T>0$, the
  equation~$\sum_{i=1}^{n}1/M_i=T$ has only finitely many solutions in
  positive integers~$M_1,\dots,M_n$.
\end{lemma}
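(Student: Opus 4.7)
The plan is to proceed by induction on $n$, using the standard observation that when a positive sum of unit fractions equals a fixed target, the smallest denominator cannot be too large.

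For the base case $n=1$, the equation $1/M_1 = T$ admits at most one positive integer solution, namely $M_1 = 1/T$ (and only when $1/T$ is a positive integer). Now assume the statement holds for $n-1$ and consider solutions with $n$ terms. It suffices to bound the number of \emph{ordered} tuples, since any unordered solution yields at most $n!$ ordered ones. So I may assume $M_1 \le M_2 \le \dots \le M_n$. Then
\[
T = \sum_{i=1}^{n}\frac{1}{M_i} \le \frac{n}{M_1},
\]
so $M_1 \le n/T$, giving only finitely many possibilities for $M_1$.

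For each admissible value of $M_1$, set $T' = T - 1/M_1$. If $T' \le 0$, there are no solutions extending this choice of $M_1$. Otherwise, the remaining unknowns $M_2, \dots, M_n$ must satisfy $\sum_{i=2}^{n} 1/M_i = T'$, which by the inductive hypothesis has only finitely many solutions in positive integers. Summing over the finitely many choices of $M_1$ gives a finite total count of ordered tuples, and hence (up to the factor $n!$) a finite count of unordered tuples as well.

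The argument presents no genuine obstacle; the only step requiring care is the observation that the smallest $M_i$ is bounded, which reduces the problem to finitely many instances of the $(n-1)$-variable version. This bound is what prevents the denominators from escaping to infinity while preserving the sum.
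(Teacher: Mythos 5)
Your proof is correct and follows essentially the same route as the paper's: induction on $n$, bounding the smallest denominator by $n/T$ to leave only finitely many cases, then applying the inductive hypothesis to the remaining sum $T-1/M_1$ (with your separate handling of $T'\le 0$ playing the role of the paper's observation that the residual target is strictly positive). No gaps to report.
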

\begin{proof}
We proceed by induction on~$n$.  If $n=1$, then there is one
solution~$M_1=1/T$ if~$1/T$ is integral, otherwise no solution.
Suppose~$n\ge2$, and let~$M=\min_i(M_i)$.  Then $1/M < \sum_i1/M_i \le
n/M$, and hence $1/T < M\le n/T$, so there are only finitely many
possibilities for~$M$.  For each of these, we may assume (permuting
the~$M_i$ if necessary) that $M_n=M$; then $\sum_{i=1}^{n-1}1/M_i =
T-1/M >0$, which has finitely many solutions by induction.
\end{proof}

\begin{lemma}
  For each~$M\ge1$, the equation~$\psi(N)=M$ has only finitely many
  solutions.
\end{lemma}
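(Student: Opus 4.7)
The plan is to use the explicit formula for $\psi(N)$ established in \Cref{cor:psi-formula}, namely
\[
\psi(N) = N \prod_{p \mid N}\left(1+\frac{1}{p}\right).
\]
Since every factor in the product satisfies $1+1/p \ge 1$, we immediately obtain the crude lower bound $\psi(N) \ge N$ for all $N \ge 1$. Hence any $N$ with $\psi(N) = M$ must satisfy $N \le M$, and so there are at most $M$ possible values of $N$, which is certainly finite.

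There is no real obstacle here; the proof is essentially a one-line consequence of the formula. The only thing to note is that no finer arithmetic information (e.g.\ about the possible prime factorisations of $N$) is needed: the trivial inequality $\psi(N) \ge N$ is already enough to force finiteness. If one wanted a slightly sharper bound, one could observe that for $N > 1$ the product is strictly greater than $1$, so in fact $N < M$; but this refinement is not required for the lemma as stated.
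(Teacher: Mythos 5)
Your proof is correct, and it is in fact simpler than the one in the paper. The paper argues by analogy with the classical finiteness proof for $\varphi(N)=M$: from the formula it deduces that $p\mid N$ forces $p+1\mid M$, which confines the primes dividing $N$ to a finite set, and that $p^e\mid N$ forces $p^{e-1}\mid M$, which bounds the exponent of each such prime. You instead observe that the product $\prod_{p\mid N}(1+1/p)$ is at least $1$, so $\psi(N)\ge N$ and hence $N\le M$; this trivial inequality already gives finiteness in one line. The trade-off is that the paper's divisibility conditions, while more work, yield a much sharper description of the possible solutions: they immediately tell you which primes and which exponents can occur (as illustrated in the example with $M=12$, where only $p\in\{2,3,5,11\}$ survive), and this finer information is what makes the subsequent enumeration of solutions to the weight equation practical. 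Your bound $N\le M$ also gives an algorithm (test all $N\le M$), just a cruder one. Both arguments are valid; yours buys brevity, the paper's buys explicit structural constraints.
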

\begin{proof}
(This is essentially the same as the well-known proof that
  $\varphi(N)=M$ has only finitely many solutions, where~$\varphi$ is
  the Euler totient function.)

From the formula~(\ref{eqn:psi-formula}), we see that, for primes~$p$,
if~$p\mid N$ then $p+1\mid M$, so the primes possibly dividing~$N$
belong to a finite set.  Next, if $p^e\mid N$ then $p^{e-1}\mid M$,
which bounds the possible exponents of each prime.
\end{proof}
\begin{example}
Taking $M=12$, the only primes~$p$ such that $p+1\mid12$
are~$p=2,3,5,11$, and the maximal exponents are~$3,2,1,1$
respectively.  We find that $\psi(N)=12$ for $N=6,8,9,11$ and no more.
\end{example}

These lemmas are sufficiently explicit to be able to write a computer
program to list all solutions~$N_1,\dots,N_n$ to the weight
equation~(\ref{eqn:weight-formula}).  Such a program can be found in
our repository~\cite{CK}, though we do not use it for the results in
this paper.  Not all such solutions correspond to strongly minimal
coverings, and more conditions are needed to eliminate such
``impossible'' index sequences. The simplest of these is that no two
indices in a strongly minimal covering can be coprime (since the
lattices must be pairwise separated). Hence, for example, the index
sequence~$(2,3,3,4)$ has weight $1/3+1/4+1/4+1/6=1$, but there is no
strongly minimal covering with indices~$2,3,3,4$ as these are not
coprime.  Taking $n=4$, we find $11$ solutions to the weight equation
(up to permutation), but the only ones with coprime indices
are~$(2,2,4,4)$ and~$(3,3,3,3)$, both of which correspond to strongly
minimal coverings.


\section{Refinement of coverings}
\label{sec:refinement}
The process of refinement consists of replacing some or all of the
(cocyclic) lattices in a covering with several of its (cocyclic)
sublattices, in such a way that the covering property is preserved,
and if the original covering was strongly minimal before refinement,
then the refined covering is also strongly minimal.  Starting with the
trivial covering, which is strongly minimal, we thereby obtain more
strongly minimal coverings.


We will start with defining $p$-refinement and~$p$-descendants for a
prime~$p$.
\begin{definition}
  \label{def:p-descendants}
  The cocyclic sublattices of relative index~$p$ in a cocyclic
  lattice~$L$ are the \emph{$p$-descendants} of~$L$, and are called
  \emph{$p$-siblings} of each other.
\end{definition}

By \Cref{cor:lattice-index-p}, the number of $p$-descendants
of~$L\in\L(N)$ is $\psi(Np)/\psi(N)$; this is either~$p$ or~$p+1$,
depending on whether or not~$p\mid N$.  Hence the weight of~$L$ is
equal to the sum of the weights of its $p$-descendants.

As also stated in~\Cref{cor:lattice-index-p}, every \emph{primitive}
vector in a cocyclic lattice~$L$ belongs to \emph{exactly one} of its
$p$-descendants. Thus, the $p$-descendants are pairwise separated: the
set of primitive vectors of~$L$ is the disjoint union of the sets of
primitive vectors in its $p$-descendants.

\begin{example}
  Let $L=L(0:1;2)=\{(2x,y):x,y \in \Z\}$. The cocyclic sublattices of
  relative index~$2$ are~$L(0:1;4)$ and~$L(2:1;4)$, since the
  preimages of $(0:1)_2$ in $\P(4)$ are~$(0:1)_4$ and~$(2:1)_4$.
  Here, $L(0:1;4)=\{(4x,y)\}=\{(2x,y)\mid x\ \text{even}\}$ and
  $L(2:1;4)=\{(2x,y)\mid x\equiv y\pmod{2}\}$. The third sublattice of
  of index~$2$ in~$L$ is $\{(2x,y)\mid y\ \text{even}\}=2\Z^2$.  The
  imprimitive vector~$(2,2)$ is in~$L$ but is in neither of the first
  two sublattices.
\end{example}

\begin{definition}[Definition of $p$-refinement]
  \label{def:p-refinement}
  Given a covering $\C$, a lattice~$L\in\C$, and a prime~$p$, we can
  form a new covering by replacing~$L$ by all of its
  $p$-descendants~$L_j$, for~$1\le j\le m$, where~$m$ is either~$p$
  or~$p+1$ depending on whether or not the index of~$L$ is divisible
  by~$p$. This new covering
  \[
  \C' = \C \cup \{L_j\mid 1\le j\le m\} \setminus \{L\}
  \]
  is called a \emph{$p$-refinement} of~$\C$.
\end{definition}

%

\begin{example}
  The $p$-refinement of the trivial covering is the full index-$p$
  covering.
\end{example}

\begin{definition}[Definition of refinement]
  A \emph{refinement} of a covering is any covering obtained from the
original by applying any sequence of (zero or more) $p$-refinements,
using the same or different primes~$p$ at each stage.
\end{definition}

Note that refining a covering does give another covering, since the
set of primitive vectors in the union is unchanged.  Refining a
strongly minimal covering gives another strongly minimal covering (as
the weight has also not changed), and if a refinement of a minimal
covering is strongly minimal, then the original covering was also
strongly minimal (for the same reason).
In particular, we have the following.

\begin{theorem}
  \label{thm:triv-refine}
  All refinements of the trivial covering are strongly minimal.
\end{theorem}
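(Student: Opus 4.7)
The plan is to proceed by induction on the number of $p$-refinement steps used to build the covering from the trivial covering $\{\Z^2\}$. The invariant I want to maintain at each stage is precisely the defining condition of strong minimality: that every primitive vector of $\Z^2$ lies in \emph{exactly one} lattice of the current covering.

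For the base case, the trivial covering $\{\Z^2\}$ is manifestly strongly minimal, since $\Z^2$ is its only lattice and it contains every vector (in particular every primitive vector).

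For the inductive step, suppose $\C$ is strongly minimal and $\C'$ is obtained from $\C$ by a single $p$-refinement, replacing some $L \in \C$ by the set of its $p$-descendants. Let $v \in \Z^2$ be any primitive vector. By strong minimality of $\C$, there is a unique $L_0 \in \C$ containing $v$. If $L_0 \neq L$, then $L_0 \in \C'$, while $v \notin L$ forces $v$ to be absent from every $p$-descendant of $L$ (since each is a sublattice of $L$); hence $v$ lies in exactly one lattice of $\C'$. If instead $L_0 = L$, then by~\Cref{cor:lattice-index-p} the primitive vector $v$ belongs to exactly one $p$-descendant of $L$, and by strong minimality of $\C$ it belongs to none of the lattices in $\C \setminus \{L\} \subseteq \C'$; so again $v$ lies in exactly one lattice of $\C'$. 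This verifies the invariant for $\C'$ and completes the induction, since by definition every refinement of the trivial covering is obtained by applying finitely many $p$-refinement steps.

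I do not anticipate any real obstacle: the content has essentially been front-loaded into~\Cref{cor:lattice-index-p}, which already records that the $p$-descendants of a cocyclic lattice are pairwise separated and that their primitive vectors partition those of the parent. Once that is in hand, the argument above is a clean bookkeeping exercise, and the theorem follows without needing the weight formalism (though~\Cref{prop:weight-formula} supplies an alternative viewpoint, since $p$-refinement preserves the weight of the covering at $1$).
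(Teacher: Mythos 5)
Your proof is correct, but it takes a more elementary route than the paper does. The paper disposes of this theorem in the paragraph preceding its statement: refinement leaves the set of primitive vectors in the union unchanged (so the result is still a covering), and it leaves the weight unchanged (since the weights of the $p$-descendants of $L$ sum to $\wt(L)$), so strong minimality is preserved because, by~\Cref{prop:weight-formula}, a covering is strongly minimal exactly when its weight equals~$1$; the trivial covering has weight~$1$, and the conclusion follows. You instead run a direct induction on the number of $p$-refinement steps, tracking the partition property itself via~\Cref{cor:lattice-index-p}: a primitive vector lying in a lattice other than the refined one is untouched, and a primitive vector lying in the refined lattice $L$ lands in exactly one $p$-descendant. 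Both arguments ultimately rest on the same fact (\Cref{cor:lattice-index-p}), but yours avoids the weight/density formalism of~\Cref{prop:weight=density} and~\Cref{prop:weight-formula} entirely, which makes it self-contained; the paper's version is a one-liner once the weight machinery is in place and also immediately yields the companion observation that if a refinement of a minimal covering is strongly minimal then so was the original. One small point of rigour: \Cref{def:covering} defines ``strongly minimal'' as \emph{minimal} plus the exactly-one property, so strictly you should also note that your invariant forces minimality --- since each lattice in your coverings is cocyclic, it is generated by its primitive vectors (\Cref{lem:primitive-basis}), so replacing it by a proper sublattice omits some primitive vector, which by the exactly-one property lies in no other lattice, destroying the covering (\Cref{lem:primitive-cover}). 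The paper itself elides this distinction (e.g.\ in the proof of~\Cref{prop:weight-formula}), so this is a cosmetic addition rather than a gap.
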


We would like to bound the size of the covering in terms of the
indices of its lattices. This is straightforward for refinements of
the trivial covering, and the bound is expressed in terms of an
arithmetic function which we now define.

\begin{definition}
  The additive arithmetic function~$G$ is defined on prime powers~$p^e$
by~$G(p^e)=e(p-1)+1$, extended by additivity, so if~$N$ has prime
factorization~$N=\prod_{i=1}^{m}p_i^{e_i}$,
then~$G(N)=\sum_{i=1}^{m}(e_i(p_i-1)+1)$.

Note that $G(N)= F(N)+\omega(N)$, where~$F(N)$ is the totally additive
function\footnote{In~\cite{Simpson}, $F$ is denoted~$f$.}
with~$F(p)=p-1$ for a prime~$p$, and $\omega(n)$ is the usual function
counting the number of distinct prime factors of~$n$.
\end{definition}
The function~$G$ is additive but not totally additive:
\begin{lemma}
  \label{lem:G-additive}
  For all~$m,n\ge1$,
  \[
  G(m) + G(n) = G(l) + G(d),
  \]
  where~$d=\gcd(m,n)$ and~$l=\lcm(m,n)$.
\end{lemma}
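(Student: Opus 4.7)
The plan is to reduce the identity to two pieces via the decomposition $G = F + \omega$ stated in the paper, and verify each piece separately.

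First I would observe that $F$, being totally additive with $F(p) = p-1$, satisfies $F(mn) = F(m) + F(n)$ for all positive integers $m, n$, and similarly $F(ld) = F(l) + F(d)$. Since $mn = ld$ (a standard identity for $\gcd$ and $\lcm$), we get
\[
F(m) + F(n) = F(mn) = F(ld) = F(l) + F(d),
\]
which disposes of the totally additive part at one stroke.

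Next I would handle $\omega$. For this I would count contributions prime by prime. For each prime $p$, let $\mathbf{1}_m(p) = 1$ if $p \mid m$ and $0$ otherwise, so that $\omega(m) = \sum_p \mathbf{1}_m(p)$. For a fixed prime $p$, note that $p \mid l$ iff $p \mid m$ or $p \mid n$, while $p \mid d$ iff $p \mid m$ and $p \mid n$. Hence
\[
\mathbf{1}_m(p) + \mathbf{1}_n(p) = \mathbf{1}_l(p) + \mathbf{1}_d(p),
\]
since both sides equal the number of the two sets $\{m,n\}$ (respectively $\{l,d\}$) that $p$ divides, which is just the count of $p$ in the multiset of prime factors, with multiplicity $2$ when $p$ divides both, $1$ when it divides exactly one, and $0$ otherwise. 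Summing over $p$ yields $\omega(m) + \omega(n) = \omega(l) + \omega(d)$.

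Adding the two identities gives the claim. There is no real obstacle here; the only thing to watch is the boundary case where one of $m, n$ equals $1$ (so $d = \min(m,n)$ divides things trivially), which is handled uniformly by the prime-by-prime counting.
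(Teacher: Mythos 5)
Your proof is correct and follows the same route as the paper: decompose $G=F+\omega$, use total additivity of $F$ (together with $mn=ld$), and verify the identity $\omega(m)+\omega(n)=\omega(l)+\omega(d)$, which you do by a prime-by-prime indicator count. The paper simply asserts these two facts as elementary, so your write-up is just a more detailed version of the same argument.
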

\begin{proof}
  This is elementary, using~$G=F+\omega$, since~$F$ is totally
  additive and~$\omega$ clearly
  satisfies~$\omega(m)+\omega(n)=\omega(l)+\omega(d)$.
\end{proof}

The bound in the following theorem is easy to prove in the case of
refinement coverings (that is, refinement of the trivial covering).
In~\Cref{thm:Simpson-theorem-2-lattice} we will prove that the same
bound holds for all irredundant coverings.

\begin{theorem}
\label{thm:refinements-lower-bound}
  Let~$\C$ be a covering which is a refinement of the trivial
  covering. Let~$N=\lcm(\C)$.  Then
\[
|\C| \ge 1 + G(N).
\]
Equality holds if and only if, in the construction of~$\C$, every
$p$-refinement was applied to a lattice whose index had maximal
$p$-valuation.
\end{theorem}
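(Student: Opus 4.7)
\medskip

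\noindent\textbf{Proof plan.} I will argue by induction on the number of refinement steps used to construct $\C$ from the trivial covering~$\{\Z^2\}$. The key device is the non-negative invariant
\[
Q(\C') := |\C'| - 1 - G(\lcm(\C')),
\]
defined for every intermediate covering $\C'$ along the refinement chain. At the start one has $Q(\{\Z^2\}) = 1 - 1 - G(1) = 0$. I will show that each $p$-refinement step either leaves $Q$ unchanged, precisely when the refined lattice has maximal $p$-valuation in the current covering, or else strictly increases $Q$. This immediately yields both $|\C| \ge 1 + G(N)$ and the stated equality condition.

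For the inductive step, suppose the current covering is $\C'$ with $N = \lcm(\C')$, and we refine a lattice $L\in\C'$ of index $M$ at a prime $p$. Writing $a = \ord_p(N)$ and $b = \ord_p(M)$, we have $b \le a$ since $M\mid N$, and by \Cref{cor:lattice-index-p} the refinement replaces $L$ by $p+1$ descendants of index $Mp$ if $p\nmid M$ (so $b=0$) and by $p$ descendants of index $Mp$ if $p\mid M$ (so $b\ge 1$). Hence
\[
\Delta|\C'| = \begin{cases} p & \text{if } b=0, \\ p-1 & \text{if } b\ge1. \end{cases}
\]
The new index lcm is $N' = \lcm(N, Mp)$, which differs from $N$ only in its $p$-part: $\ord_p(N') = \max(a, b+1)$, while all other prime exponents are unchanged.

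The analysis now splits into two cases. If $b = a$ (the ``maximal $p$-valuation'' case), then $\ord_p(N')= a+1$. From the formula $G(p^e) = e(p-1)+1$ for $e \ge 1$ together with $G(1)=0$, a direct calculation gives $G(p^{a+1}) - G(p^a) = p$ when $a=0$ and $p-1$ when $a\ge1$. In both subcases this exactly matches $\Delta|\C'|$, so $\Delta Q = 0$. If $b < a$, then $\ord_p(N')=a$, so $\Delta G = 0$ while $\Delta|\C'| \ge p-1 \ge 1$, so $Q$ strictly increases.

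Combining these observations: $Q(\C)\ge 0$, with equality if and only if every refinement step was of the maximal-valuation type, that is, every $p$-refinement was applied to a lattice in the current covering whose index attained the maximum $p$-adic valuation. I do not anticipate a serious obstacle; the only point requiring care is the split between the subcases $a=0$ and $a\ge 1$ in the maximal-valuation analysis, which reflects the two branches (``$p+1$ vs.\ $p$ descendants'') of \Cref{cor:lattice-index-p}, and the observation that changes outside the prime $p$ contribute nothing to $\Delta G$.
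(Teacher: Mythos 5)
Your proof is correct and is essentially the paper's own argument: the same induction on the number of refinement steps, with the same case analysis (the refined lattice's $p$-valuation $b$ versus the maximal one $a=\ord_p(\lcm)$, distinguishing $a=0$ from $a\ge1$), merely packaged via the invariant $Q(\C')=|\C'|-1-G(\lcm(\C'))$ instead of comparing $n$ and $g+1$ directly. No gaps; the equality criterion follows exactly as in the paper since $Q$ starts at $0$ and is non-decreasing, increasing strictly precisely at non-maximal-valuation steps.
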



\begin{proof}
  We proceed by induction on the number of refinement
  steps. Initially, for the trivial covering, we have $|\C|=1$ and
  $N=1$, so $G(N)=0$ and equality holds.

  Suppose that the result holds for a covering~$\C$ of size~$n$ with
  $N=\lcm(\C)$ and $g=G(N)$: so $n\ge g+1$, with equality if and only
  if the condition in the statement holds. Now apply $p$-refinement
  (for some prime~$p$) to one lattice~$L\in\C$, to obtain a new
  covering~$\C'$, and set~$n'=|\C'|$, $N'=\lcm(\C')$
  and~$g'=G(N')$. Let (*) be the condition that the index of~$L$ has
  the maximal~$p$-valuation of any lattice in~$\C$.

  If~$p\nmid N$, then (*) certainly holds, $n'=n+p$, $N'=pN$,
  and~$g'=g+p$.

  If~$p\mid N$ and (*) holds, then~$n'=n+p-1$, $N'=pN$, and~$g'=g+p-1$.

  Finally, if~$p\mid N$ and (*) does not hold, then either $n'=n+p$
  or~$n'=n+p-1$, while $N'=N$ and~$g'=g$.

  Hence, in all cases, we have $n'\ge g'+1$, and $n'=g'+1$ if and only
  if (*) holds and~$n=g+1$. This completes the induction.
\end{proof}


It is tempting to conjecture that every strongly minimal covering is a
refinement of the trivial covering, and this is true for strongly
minimal coverings of small size, but is not the case in general:
see~\Cref{ex:strong-not-refinement} for a counterexample of size~$29$.
However the conjecture is true in certain cases:
\begin{itemize}
\item every strongly minimal covering in which every lattice has index
  a power of one prime~$p$ is a refinement of the trivial covering
  (see~\Cref{thm:all-p-power});
\item every minimal covering of size~$n\le6$ is strongly minimal and
  is a refinement of the trivial cover; and every strongly minimal
  covering of size~$n\le8$ is a refinement of the trivial covering
  (see~\Cref{sec:small-size}).
\end{itemize}

Refining a minimal covering which is not strongly minimal does not
always lead to a minimal covering.
\begin{example}[Continuation of \Cref{ex:min-not-strong}]
  Consider again the minimal (but not strongly minimal) covering
  \[
  \C = \{L(1:0;2), L(1:0;3), L(0:1;3)\} \cup \{L(c:1;6) \mid c \in \{\pm1,\pm2\}\}.
  \]
As this is not strongly minimal, it is not a refinement of the trivial
covering.  If we apply~$2$-refinement to $L(1:0;3)$, replacing it
with~$L(1:0;6)$, $L(1:3;6)$, and~$L(2:3;6)$, we obtain a non-minimal
covering, since~$L(1:0;6)\subset L(1:0;2)$. This is an example where
refining a minimal covering leads to a redundant, hence non-minimal, covering.
\end{example}

We now present several constraints on minimal and strongly minimal
coverings, which we will use to determine all the minimal coverings of
sizes up to~$8$ in~\Cref{sec:small-size}, and further constraints on
strongly minimal coverings, which we will use to determine all the
strongly minimal coverings of sizes up to~$8$. The next result, which is~\Cref{thm:A}, states that coverings where the lattice indices have
only one prime divisor may be constructed by $p$-refinement of the
trivial covering.

\begin{theorem}
  \label{thm:all-p-power}
  Every irredundant covering~$\C$ with~$\lcm(\C)$ a prime power is a
  refinement of the trivial covering.
\end{theorem}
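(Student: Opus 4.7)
My plan is to proceed by strong induction on~$|\C|$. The base case $|\C|=1$ is immediate: an irredundant covering of size~$1$ must be $\{\Z^2\}$, the trivial covering. For the inductive step, I will exhibit a strictly smaller irredundant covering $\C'$, still with $\lcm(\C')$ a power of~$p$, such that $\C$ is obtained from $\C'$ by a single $p$-refinement; the inductive hypothesis then completes the proof.

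To construct $\C'$, I reverse one $p$-refinement step. Pick $L\in\C$ of maximal index $p^k$; since $|\C|\ge 2$ and $\C$ is irredundant we have $\Z^2\notin\C$, so $k\ge 1$. Let $L'\in\L(p^{k-1})$ be the unique cocyclic parent of $L$. The key claim is that $L'\notin\C$ and every $p$-descendant of $L'$ lies in $\C$. The first half is immediate: if $L'\in\C$ then $L\subsetneq L'$ are both in $\C$, making $L$ redundant.

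The second half, the main technical step, rules out missing siblings of $L$. Suppose for contradiction that some $p$-descendant $L_i$ of $L'$ is not in $\C$, and pick a primitive vector $v\in L_i$. As $\C$ is a covering, there is some $M\in\C$ with $v\in M$; write $[\Z^2:M]=p^j$, and note $j\le k$ since $\lcm(\C)=p^k$. By~\Cref{cor:unique-LcdN} we have $M=L(v;p^j)$, so $M$ lies on the unique chain
\[
\Z^2 \supset L(v;p) \supset \cdots \supset L(v;p^k) = L_i,
\]
which contains $L'=L(v;p^{k-1})$ at height $k-1$. The three cases $j=k$, $j=k-1$, and $j<k-1$ each produce a contradiction: respectively $M=L_i\notin\C$; $M=L'\notin\C$ (by the first half of the claim); and $M\supsetneq L'\supsetneq L$ with $M\ne L$ both in $\C$, which makes $L$ redundant. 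This is the only real obstacle in the argument; it is enabled by the tree-like structure of cocyclic $p$-power lattices given by~\Cref{cor:contain-or-separate}, which forces any $M\in\C$ containing $v\in L_i$ to lie on the common chain through $L'$.

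Once the claim is proved, set $\C'=(\C\setminus\{\text{$p$-descendants of $L'$}\})\cup\{L'\}$. By~\Cref{cor:lattice-index-p} the primitive vectors of $L'$ partition into those of its $p$-descendants, which makes it straightforward to verify that $\C'$ is a covering, that irredundancy of $\C$ transfers to $\C'$ (any irredundancy witness for a $p$-descendant of $L'$ serves as one for $L'$, and a witness for any other lattice cannot lie in $L'$), and that $\lcm(\C')\mid p^k$ with $|\C'|<|\C|$. By induction $\C'$ is a refinement of the trivial covering, and applying one further $p$-refinement at $L'\in\C'$ recovers $\C$.
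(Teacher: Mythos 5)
Your proposal is correct and follows essentially the same route as the paper's proof: induct on the size of~$\C$, take a lattice of maximal index~$p^k$, use irredundancy and the chain of $p$-power cocyclic lattices through a primitive vector to show that all $p$-descendants of its index-$p^{k-1}$ parent lie in~$\C$, un-refine, and apply the inductive hypothesis. The only point worth tightening is the transfer of irredundancy to~$\C'$: the witness for a lattice $X\ne L'$ should be taken to be a \emph{primitive} vector (which is possible because $\C$ is a covering, cf.~\Cref{lem:primitive-cover}), since an imprimitive vector of~$L'$ need not lie in any of its $p$-descendants, and this is exactly the extra step the paper makes via~\Cref{cor:lattice-index-p}.
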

\begin{proof}
  We proceed by induction on~$n=|\C|$.  The case~$n=1$ is trivial.  Let~$\lcm(\C)=p^{k+1}$
  with~$k\ge0$, as otherwise~$\C$ is the trivial covering. 

  We will use the fact that for each primitive vector~$v$, the
  $p$-power index lattices~$L(v;p^e)$ containing~$v$ form an infinite
  chain, ordered by inclusion, with relative indices all equal to~$p$.

  Let~$L$ be a lattice in~$\C$ of maximal index~$p^{k+1}$, and let~$M$
  be the lattice of index~$p^k$ containing~$L$. We claim that all the
  $p$-descendants of~$M$ (of which~$L$ is one) belong to~$\C$.  If so,
  then~$\C$ is a $p$-refinement of a covering~$\C'$ in which all these
  descendants of~$M$ are replaced by~$M$ itself. As~$\C'$ has fewer
  lattices in it, all of which are of $p$-power index, we may conclude
  by induction provided that~$\C'$ is still irredundant.  Certainly,
  $M$ is not contained in the union of~$\C'\setminus\{M\}
  \subseteq\C\setminus\{L\}$, as if so, then the same would be true
  for~$L$, contradicting the irredundancy of~$\C$. Also,
  if~$L'\in\C'\setminus\{M\}$ is contained in the union
  of~$\C'\setminus\{L'\}$, this also contradicts the irredundancy
  of~$\C$. Indeed, since~$M$ contains exactly the same primitive vectors as
  the union of its $p$-descendants
  (\Cref{cor:lattice-index-p}), it would follow that all primitive vectors in~$L'$ are already contained in~$\C\setminus\{L'\}$, hence~$\C\setminus\{L'\}$ would contain all primitive vectors and thus be a cover by~\Cref{lem:primitive-cover}.

  To prove the claim, we must show that every primitive vector~$v\in
  M$ belongs to a lattice~$L_v\in\C$ of index~$p^{k+1}$. For each
  such~$v$, we have~$M=L(v;p^k)$ since~$v\in M$, and there is a
  lattice~$L_v$ in~$\C$ containing~$v$, with prime power index~$p^e$
  for some~$e\ge1$, so~$L_v=L(v;p^e)$.  By maximality of~$k$, we
  have~$e\le k+1$.  If $e\le k$, then $L\subseteq M = L(v;p^k)
  \subseteq L(v;p^e) = L_v$, contradicting the irredundancy
  condition. So $e=k+1$, and~$L_v$ is a $p$-descendant of~$M$ as
  required.
\end{proof}

\begin{corollary}
  \label{cor:n-p-e-bound-p-power}
Let $p$ be a prime, and let~$\C$ be an irredundant covering of
size~$n$ with~$\lcm(\C)=p^e$ (with~$e\ge1$).  Then~$n=k(p-1)+2$ for
some integer~$k\ge e$.  Hence $n\equiv 2\pmod{p-1}$, and if~$p\not=2$, then~$n$ is even.
\end{corollary}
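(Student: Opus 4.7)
The plan is first to apply \Cref{thm:all-p-power}: since $\lcm(\C)=p^e$, every lattice in $\C$ has $p$-power index, so $\C$ is obtained from the trivial covering $\{\Z^2\}$ by some finite sequence of $p$-refinements. Let $r$ be the number of refinement steps used; since $e\ge1$ we have $\C\ne\{\Z^2\}$, and so $r\ge1$.

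Next I would track how $|\C|$ evolves along this sequence, using \Cref{cor:lattice-index-p}. The first step is special: it refines $\Z^2$ (index $1$), and since $p\nmid 1$ it replaces $\Z^2$ with $p+1$ cocyclic sublattices, so the size jumps from $1$ to $p+1$. From this point on, every lattice appearing in any intermediate covering has $p$-power index at least $p$, hence divisible by $p$, so each subsequent $p$-refinement replaces one lattice by exactly $p$ descendants, increasing the size by $p-1$. Therefore
\[
n \;=\; (p+1) + (r-1)(p-1) \;=\; 2 + r(p-1),
\]
and setting $k=r\ge1$ we obtain $n = k(p-1)+2$.

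Finally, the bound $k\ge e$ is obtained by comparing with \Cref{thm:refinements-lower-bound}. Since $\C$ is a refinement of the trivial covering and $G(p^e) = e(p-1)+1$, that theorem gives
\[
n \;\ge\; 1 + G(p^e) \;=\; 2 + e(p-1),
\]
and juxtaposing this with $n = 2 + k(p-1)$ forces $k\ge e$. The two remaining assertions are immediate: $n\equiv 2\pmod{p-1}$ is clear from $n=k(p-1)+2$, and when $p\ne2$ the factor $p-1$ is even, so $n$ is even.

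The only point that requires care is isolating the first refinement step, which is the unique one in which $p$ does not divide the current index; once this is handled, the arithmetic of the size change in each step is determined by \Cref{cor:lattice-index-p}, and the rest is a direct comparison with the $G$-bound.
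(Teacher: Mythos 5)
Your proof is correct and follows essentially the same route as the paper: invoke \Cref{thm:all-p-power} to realise $\C$ as a refinement of the trivial covering, then count that the first $p$-refinement adds $p$ to the size and each later one adds $p-1$, giving $n=k(p-1)+2$. The only difference is that you obtain $k\ge e$ by comparing with the bound $n\ge 1+G(p^e)$ from \Cref{thm:refinements-lower-bound}, whereas the paper treats it as immediate (after $k$ refinement steps the index lcm divides $p^k$, so $e\le k$); both justifications are fine.
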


\begin{proof}
By~\Cref{thm:all-p-power}, the covering is a refinement of the trivial
covering.
Let~$k$ be the number of
$p$-refinement steps, so~$k\ge e$. Now all $p$-refinement steps after
the first are applied to lattices of index divisible by~$p$, so the
first step increases the size by~$p$ and all others by~$p-1$, so~$n =
k(p-1)+2$. The last part is clear.
\end{proof}

Note that some condition of irredundancy is needed here, as otherwise we
could add to the covering a lattice of arbitrarily large $p$-power
index.

%
%
%

We now give an example of a strongly minimal covering which is not a
refinement of the trivial covering.

\begin{example}
  \label{ex:strong-not-refinement}
  First consider the pairwise separated lattices~$L_1=L(0:1;6)$,
  $L_2=L(1:1;10)$, and~$L_3=L(-1:1;15)$.  The set of refinements of
  these to index~$30$ consists of~$13$ distinct lattices,
  namely~$L(c:d;30)$ for the following~$(c:d)_{30}$:
  \begin{align*}
    (0:1)_6    &\mapsto (0:1)_{30}, (6:1)_{30}, (12:1)_{30}, (18:1)_{30}, (24:1)_{30}, (6:5)_{30};\\
    (1:1)_{10}  &\mapsto (1:1)_{30}, (11:1)_{30}, (21:1)_{30}, (1:21)_{30};\\
    (-1:1)_{15} &\mapsto (-1:1)_{30}, (14:1)_{30}, (13:2)_{30}.
  \end{align*}
  If we set $L_i$ for~$4\le i\le 62$ to be the other~$59$ cocyclic
  lattices of index~$30$ (of which there are~$\psi(30)=72$ in all),
  then we obtain a strongly minimal covering of size~$62$.  It is not
  a refinement of the trivial covering, since in any such refinement,
  all the lattice indices will be divisible by the prime used in the
  first refinement step, while here the indices are~$6$, $10$, $15$,
  and~$30$.

  This is not a minimal counterexample, but was chosen to have a
  relatively simple description.  One example of size only~$29$
  consists of:
  \begin{itemize}
  \item $L(c:1;15)$ for $0\le c \le14$;
  \item $L(1:0;6)$,  $L(2:3;6)$;
  \item $L(1:5;10)$;
  \item $11$ more lattices of index~$30$.
  \end{itemize}
  Here, the first~$18$ lattices have index strictly dividing~$30$ and
  are pairwise separated, and they refine to give~$61$ of the~$72$
  distinct cocyclic lattices of index~$30$; the last~$11$ complete the
  strongly minimal covering.
\end{example}


\section{Lower bounds for the size of a minimal covering}
  \label{sec:lower-bounds}
In this section we establish a lower bound for the size of a minimal
covering in terms of its index lcm.  Our treatment here follows that
in Simpson's paper~\cite{Simpson}, where the corresponding result for
systems of covering congruences is proved, but some new ideas are
required.

\begin{definition}
For $p$ prime, $k,i\in\Z$ with~$k\ge1$ and~$v\in\Z^2$ primitive, let
$P_{i,k}(v;p)$ be the distinct $p$-descendants of~$L(v;p^{k-1})$
\emph{excluding} $L(v;p^k)$, indexed by $1\le i\le p$ if $k=1$ and by
$1\le i\le p-1$ if~$k\ge2$.  Also, for primitive~$v$ and prime
power~$p^e$, define the collection
\[
\C(v;p^e)=\{P_{i,k}(v;p)\mid  k\le e\} \cup \{L(v;p^e)\}.
\]
\end{definition}

\begin{lemma}
  \label{lem:lemma-for-thm-1-lattice}
  \begin{enumerate}[(i)]
  \item
  For all primitive $v\in\Z^2$ and primes~$p$, the
  lattices~$P_{i,k}(v;p)$ for all $i$ (bounded as above) and
  all~$k\ge1$ are pairwise separated.
  \item
  For all primitive $v\in\Z^2$ and prime powers~$p^e$ (with~$e\ge1$),
  the collection $\C(v;p^e)$ is a strongly minimal covering of
  size~$G(p^e)+1$.  It is obtained from the trivial covering by
  applying $p$-refinement~$e$ times, at each step applying it to the
  unique lattice containing~$v$.
  \end{enumerate}
\end{lemma}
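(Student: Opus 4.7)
The plan is to prove (i) by a case analysis on the depths $k$ and $\ell$. Given $P_{i,k}(v;p)$ and $P_{j,\ell}(v;p)$ with $(i,k) \neq (j,\ell)$: if $k = \ell$, these are distinct $p$-descendants of the common parent $L(v;p^{k-1})$, so by the partition-on-primitive-vectors part of~\Cref{cor:lattice-index-p} they are separated. If $k < \ell$, then~\Cref{cor:contain-or-separate} reduces matters to ruling out the inclusion $P_{j,\ell} \subseteq P_{i,k}$. To do so I would take a primitive $w \in P_{j,\ell}$; from $P_{j,\ell} \subseteq L(v;p^{\ell-1}) \subseteq L(v;p^k)$ we get $w \in L(v;p^k)$, the unique $p$-descendant of $L(v;p^{k-1})$ containing $v$, and so by~\Cref{cor:lattice-index-p} $w$ lies in none of the sibling descendants; in particular $w \notin P_{i,k}$, contradicting $P_{j,\ell} \subseteq P_{i,k}$.

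For (ii), I would induct on $e$, starting from the trivial covering $\{\Z^2\}$ (which corresponds to the empty case $e = 0$) and applying $p$-refinement at step $k$ to the unique lattice $L(v;p^{k-1})$ currently containing $v$. By definition, the $p$-descendants of $L(v;p^{k-1})$ are $L(v;p^k)$ together with the $P_{i,k}(v;p)$, so after $e$ steps the covering is exactly $\C(v;p^e)$. Each $p$-refinement preserves both the covering property and strong minimality (the latter by~\Cref{cor:lattice-index-p}, since $p$-descendants partition the primitive vectors of their parent), so the strongly minimal trivial covering yields a strongly minimal covering throughout.

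For the size, the first $p$-refinement is applied to $\Z^2$ (whose index is $1$, coprime to $p$) and replaces one lattice by $p+1$, a net gain of $p$; each subsequent $p$-refinement is applied to a lattice whose index is divisible by $p$, replacing one lattice by $p$ for a net gain of $p-1$. Hence $|\C(v;p^e)| = 1 + p + (e-1)(p-1) = e(p-1) + 2 = G(p^e) + 1$, matching the claim.

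The main technical point is the $k < \ell$ case of (i): I must trace a primitive vector $w$ from the deep sibling $P_{j,\ell}$ up through the nested chain $L(v;p^\ell) \subseteq L(v;p^{\ell-1}) \subseteq \cdots \subseteq L(v;p^k)$, and then invoke the uniqueness of the $p$-descendant of $L(v;p^{k-1})$ containing $v$ at depth $k$ to block inclusion into a shallower sibling. Everything else is just careful bookkeeping of what each $p$-refinement step does to the running covering.
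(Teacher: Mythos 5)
Your proposal is correct and follows essentially the same route as the paper: part (i) is handled by noting that equal-index siblings are separated and that a deeper $P_{j,\ell}$ sits inside the chain $L(v;p^{\ell-1})\subseteq L(v;p^k)$, hence inside the one descendant at depth $k$ excluded from the $P_{i,k}$; part (ii) is the same induction on $p$-refinement steps, with the size count $1+p+(e-1)(p-1)=G(p^e)+1$. The only cosmetic difference is that you re-derive the preservation of strong minimality under refinement inline, where the paper simply cites \Cref{thm:triv-refine}.
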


\begin{proof}
    For (i), we use the fact that two cocyclic lattices with the same index are
    either separated or equal (cf.~\Cref{cor:contain-or-separate}).  For fixed~$k$ we have
    \[
    P_{i,k}(v;p) = P_{j,k}(v;p) \iff i=j
    \]
    by definition.

    If $k_1>k_2$, then $P_{i,k_1}(v;p) \subseteq L(v;p^{k_1-1})
    \subseteq L(v;p^{k_2})$, therefore is separated from~$P_{j,k_2}(v;p)$
    for all~$j$, again by definition.

    For (ii), the construction by repeated $p$-refinement is clear:
    at the first step we obtain the full index-$p$ covering, and at
    the $k$th step we replace~$L(v;p^{k-1})$ with all its
    $p$-descendants, which include~$L(v;p^{k})$.  This implies
    that~$\C(v;p^e)$ is strongly minimal by~\Cref{thm:triv-refine},
    and the size of~$\C(v;p^e)$ is readily computed.
\end{proof}

The next theorem shows that these coverings~$\C(v;p^e)$ have the
smallest possible size among irredundant coverings containing a
lattice whose index is divisible by~$p^e$.  It is the analogue for
lattice coverings of ~\cite[Theorem 1]{Simpson}.

\begin{theorem}
  \label{thm:Simpson-theorem-1-lattice}
  Let~$\C$ be an irredundant lattice covering and let~$L=L(v;N)\in\C$ with
  $e=\ord_p(N)\ge1$.  Then
  \begin{enumerate}[(i)]
  \item For all~$i,k$ with $1\le k\le e$ and $1\le i\le p-1$ or $1\le
    i\le p$ (according as $k\ge2$ or $k=1$), there
    exists~$L_{i,k}\in\C$ with~$L_{i,k} \subseteq P_{i,k}(v;p)$.
    The~$L_{i,k}$ are pairwise separated, and separated from~$L$.
  \item $|\C| \ge 1+G(p^e)$, with equality if and only
    if~$\C=\C(v;p^e)$.
  \end{enumerate}
\end{theorem}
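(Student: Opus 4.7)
The plan is to exploit the irredundancy of~$\C$ to pick a ``witness'' primitive vector inside~$L$ and then perturb it into each target~$P_{i,k}(v;p)$ in such a way that the lattice of~$\C$ covering the perturbation is forced to have the right $p$-adic divisibility in its index.

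By irredundancy of~$\C$, first fix a primitive $w_0\in L$ with $w_0\notin L'$ for every $L'\in\C\setminus\{L\}$. Set $M=\lcm(\C)$ and write $M=p^{e'}M'$ with $\gcd(M',p)=1$. For each pair $(i,k)$ in the stated range, I would construct a primitive vector of the form
\[
  w = w_0 + p^{k-1}M'c',
\]
with $c'\in\Z^2$ chosen so that $w$ lands in the prescribed branch~$P_{i,k}(v;p)$. Existence of a primitive representative follows from a standard CRT/density argument: since $w\equiv w_0\pmod{pM'}$ and $w_0$ is primitive, the residue class is compatible with primitivity, and among vectors in the class, a positive proportion realise any specified branch. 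Then $w\wedge v=w_0\wedge v+p^{k-1}M'(c'\wedge v)$, combined with $w_0\wedge v\equiv0\pmod{p^e}$ and the branch choice (forcing $p\nmid c'\wedge v$), gives $\ord_p(w\wedge v)=k-1$, placing~$w$ in $P_{i,k}(v;p)$; in particular $w\notin L$.

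The key step is to show that any $L'=L(u';N')\in\C\setminus\{L\}$ containing~$w$ must satisfy $p^k\mid N'$. Assume instead $k':=\ord_p(N')<k$, and write $N'=p^{k'}N''$ with $\gcd(N'',p)=1$. Since $N'\mid M$, one has $N''\mid M'$ and $k'\le k-1$, so $p^{k-1}M'(c'\wedge u')\equiv 0\pmod{N'}$. Thus
\[
  w\wedge u'\equiv w_0\wedge u'\pmod{N'},
\]
and $w\in L'$ would force $w_0\in L'$, contradicting the choice of~$w_0$. Hence $p^k\mid N'$; setting $L_{i,k}:=L'$, the inclusion $L_{i,k}\subseteq L(w;p^k)=P_{i,k}(v;p)$ is immediate from~\Cref{cor:unique-LcdN}. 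The pairwise separation of the~$L_{i,k}$ and separation from~$L$ reduce to the separation of the~$P_{i,k}(v;p)$'s among themselves (and from $L(v;p^k)\supseteq L$), which is~\Cref{lem:lemma-for-thm-1-lattice}(i); in particular the $1+p+(e-1)(p-1)=1+G(p^e)$ lattices $L,L_{i,k}$ are all distinct, giving the bound in~(ii).

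For the equality case in~(ii), if $|\C|=1+G(p^e)$ then $\C=\{L\}\cup\{L_{i,k}\}$. Multiplicativity of~$\psi$ and $p^e\mid N$ give $\wt(L)=1/\psi(N)\le1/\psi(p^e)$, with equality iff $N=p^e$, and $L_{i,k}\subseteq P_{i,k}(v;p)$ gives $\wt(L_{i,k})\le1/\psi(p^k)$, with equality iff $L_{i,k}=P_{i,k}(v;p)$. Summing and comparing to $\wt(\C(v;p^e))=1$ (from~\Cref{lem:lemma-for-thm-1-lattice}(ii) and~\Cref{prop:weight-formula}), the weight inequality yields
\[
  1\le\wt(\C)\le 1/\psi(p^e)+\sum_{i,k}1/\psi(p^k)=\wt(\C(v;p^e))=1,
\]
so equality holds throughout, forcing $N=p^e$ and $L_{i,k}=P_{i,k}(v;p)$, i.e.\ $\C=\C(v;p^e)$.

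The main technical obstacle I expect is the construction of~$w$ in part~(i): the CRT choice must be precise enough that the perturbation $p^{k-1}M'c'$ is divisible by~$N'$ for \emph{every} competing lattice $L'$ with $\ord_p(N')<k$, which is why the prime-to-$p$ factor is chosen to be the full~$M'$, not merely the prime-to-$p$ part of~$N$; without this global choice, one cannot transfer the congruence $w\wedge u'\equiv w_0\wedge u'$ from its prime-to-$p$ part to the full modulus~$N'$.
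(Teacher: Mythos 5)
Your proposal is correct in substance, and part (i) takes a genuinely different route from the paper's. The paper fixes $k$, forms the subcollection $\B_k\subseteq\C$ of lattices whose index is not divisible by $p^k$, uses irredundancy to find a primitive $v_0\in L$ outside the union of $\B_k$, and then combines \Cref{prop:covering-criterion}, \Cref{cor:contain-or-separate} and \Cref{cor:index-p-intersection} to intersect $L(v_0;\lcm(\B_k))$ cyclically with each branch $P_{i,k}(v;p)$, producing in each branch a primitive vector that no member of $\B_k$ can contain. You instead take a single irredundancy witness $w_0\in L$ (valid: if every primitive vector of $L$ lay in another member of $\C$, then $\C\setminus\{L\}$ would cover by \Cref{lem:primitive-cover}) and perturb it by $p^{k-1}M'c'$, with $M'$ the prime-to-$p$ part of $\lcm(\C)$; the congruence $w\wedge u'\equiv w_0\wedge u'\pmod{N'}$, valid for every $N'\mid\lcm(\C)$ with $\ord_p(N')<k$, then forces $p^k\mid[\Z^2:L_{i,k}]$ directly, and \Cref{cor:unique-LcdN} gives $L_{i,k}\subseteq P_{i,k}(v;p)$. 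This makes the divisibility forcing very transparent and avoids the paper's case split on whether $\B_k$ is empty; the price is the existence of a \emph{primitive} $w$ in the prescribed branch, which you only sketch (``standard CRT/density''). That step is indeed routine, but note a small slip: for $k=1$ the class is fixed only modulo $M'$, not $pM'$ --- which is precisely what lets you change the mod-$p$ branch --- so primitivity at $p$ must be arranged through the choice of branch representative rather than inherited from $w_0$; for $k\ge2$ the branch is pinned down by $c'\wedge v$ modulo $p$ together with the (fixed) class of $w_0$ modulo $p$, exactly as you use. The separation statements and the count $1+p+(e-1)(p-1)=1+G(p^e)$ agree with the paper, and your equality analysis in (ii), comparing $\wt(\C)$ with $\wt(\C(v;p^e))=1$ via \Cref{prop:weight-formula} and \Cref{lem:lemma-for-thm-1-lattice}, is essentially the same weight argument the paper gives.
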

Note that the strongest result is obtained by taking $e$ to be the
largest exponent of~$p$ dividing any index of a lattice in the
covering.

\begin{proof}
    For (i), let $\B_k$ be the subcollection of~$\C$ consisting of those
    lattices with index not divisible by~$p^k$.

    First suppose that~$\B_k$ is empty; that is, all the lattices
    in~$\C$ have index divisible by~$p^k$.  For~$1\le i\le k$,
    let~$v_{i,k}$ be any primitive vector in~$P_{i,k}(v;p)$, and
    let~$L_{i,k}$ be a lattice in~$\C$ containing it.
    Then~$L_{i,k}=L(v_{i,k};N_{i,k})$ with~$p^k\mid N_{i,k}$,
    so~$L_{i,k}\subseteq P_{i,k}(v;p)$ by~\Cref{cor:contain-or-separate}, as both contain~$v_{i,k}$ and
    the index of~$P_{i,k}(v;p)$ is~$p^k$.

    Otherwise (when~$\B_k$ is not empty), let $M=\lcm(\B_k)$, so that
    $\ord_p(M)\le k-1$. Since~$L\not\in\B_k$ and~$\C$ is irredundant, the lattices
    in~$\B_k$ do not form a covering. In particular, by~\Cref{prop:covering-criterion}, there exists
    a primitive vector~$v_0\in L$ such that 
    $L':=L(v_0;M)$ is separated from any lattice in~$\B_k$.

    Since~$v_0 \in L \subseteq L(v;p^{k-1})$, the lattices $L'$ and
    $L(v;p^{k-1})$ both contain~$v_0$, so intersect cyclically.  Since
    $\ord_p(M) \le k-1$, $L'$
    also intersects cyclically each of the lattices~$P_{i,k}(v;p)$ by~\Cref{cor:index-p-intersection}, as
    these have index~$p$ in~$L(v;p^{k-1})$.

    Let~$v_{i,k}$ be a primitive vector in~$P_{i,k}(v;p)\cap L'$.
    Then~$L(v_{i,k};p^k) = P_{i,k}(v;p)$ (as both have index~$p^k$ and
    contain~$v_{i,k}$), and also~$L'=L(v_{i,k};M)$ (as both have
    index~$M$ and contain~$v_{i,k}$).  The latter implies
    that~$v_{i,k}$ is not in~$\cup_{L\in\B_k} L$, but since~$\C$ is a covering,
    there does exist a lattice $L_{i,k}$ in~$\C$ containing~$v_{i,k}$,
    with index~$N_{i,k}$ divisible by~$p^k$
    as~$L_{i,k}\notin\B_k$. Then
      \[
      L_{i,k}=L(v_{i,k};N_{i,k}) \subseteq L(v_{i,k};p^k) = P_{i,k}(v;p).
      \]
      The~$L_{i,k}$ are pairwise separated, as the~$P_{i,k}(v;p)$ are
      (see \Cref{lem:lemma-for-thm-1-lattice}).  Since~$L=L(v;N)\subseteq L(v;p^e)\subseteq
    L(v;p^k)$, each~$P_{i,k}(v;p)$ is separated from~$L$ by the second part of~\Cref{lem:lemma-for-thm-1-lattice} (with~$e$ replaced by~$k$),

      For (ii), we have constructed $1 + p + (e-1)(p-1) = 1+G(p^e)$
      distinct lattices in~$\C$, so~$|\C|\ge 1+G(p^e)$.  These
      lattices are each sublattices of one of the~$1+G(p^e)$ lattices
      in~$\C(v;p^e)$, so their total weight is at most~$1$, with
      equality if and only if they are exactly the lattices of that
      covering.  Thus~$|\C|=1+G(p^e)$ if and only if these are all the
      lattices in~$\C$. Since~$\C$ is a covering this is if and only
      if~$\C=\C(v;p^e)$.
\end{proof}

The following result is helpful when
enumerating lattice coverings, and gives a reason for some of the
patterns visible in the tables in~\Cref{sec:small-size}.

\begin{corollary}
  \label{cor:max-e-multiplicity}
  In an irredundant lattice covering~$\C$, suppose that for some
  prime~$p$ we have~$\ord_p(\lcm(\C))=e\ge1$. Then the number of
  lattices in~$\C$ whose index is divisible by~$p^e$ is at least~$p$
  when $e\ge2$ and is at least~$p+1$ when $e=1$.
\end{corollary}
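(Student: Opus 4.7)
The plan is to deduce this directly from \Cref{thm:Simpson-theorem-1-lattice}(i) by specializing it to the case $k = e$. Since $\ord_p(\lcm(\C)) = e$, there exists at least one lattice $L = L(v;N) \in \C$ whose index $N$ satisfies $\ord_p(N) = e$ (i.e., achieves the maximal $p$-valuation). Fix such an $L$ (and a primitive $v \in L$).

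Apply \Cref{thm:Simpson-theorem-1-lattice}(i) with this choice of $L$ and $v$, taking $k = e$. This produces lattices $L_{i,e} \in \C$ with $L_{i,e} \subseteq P_{i,e}(v;p)$, indexed by $1 \le i \le p$ when $e = 1$ and by $1 \le i \le p-1$ when $e \ge 2$. Since $P_{i,e}(v;p)$ has index exactly $p^e$ and contains $L_{i,e}$, the index of each $L_{i,e}$ is divisible by $p^e$. Moreover, the theorem asserts that these $L_{i,e}$ are pairwise separated and each separated from $L$, so in particular they are all distinct from one another and from $L$.

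Counting: the lattice $L$ itself has index divisible by $p^e$, and we have produced $p$ further such lattices if $e = 1$, respectively $p-1$ further such lattices if $e \ge 2$. Hence the total number of lattices in $\C$ with index divisible by $p^e$ is at least $1 + p = p+1$ in the case $e = 1$, and at least $1 + (p-1) = p$ in the case $e \ge 2$, which is exactly the claimed bound.

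There is no real obstacle here: the entire argument consists of unpacking the conclusion of the preceding theorem and observing that $L$ contributes one additional lattice of the required type beyond the $L_{i,e}$. The only point worth being careful about is verifying that the $L_{i,e}$ are genuinely distinct from $L$ — which is immediate from the separation statement in part (i), since two lattices that are separated cannot coincide (as every cocyclic lattice contains primitive vectors).
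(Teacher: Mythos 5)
Your proposal is correct and is essentially the paper's own argument: the paper likewise takes the $L_{i,e}$ from \Cref{thm:Simpson-theorem-1-lattice}(i) together with $L$ itself, noting these all have index divisible by $p^e$ and number $p$ (for $e\ge2$) or $p+1$ (for $e=1$). Your extra remarks — choosing $L$ attaining the maximal $p$-valuation and using separation to get distinctness — just make explicit what the paper leaves implicit.
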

\begin{proof}
In the notation of the theorem, all the~$L_{i,e}$ and~$L$ itself have
index divisible by~$p^e$; the number of these is~$p$ when~$e\ge2$
and~$p+1$ when~$e=1$.
\end{proof}

\begin{example}
  Consider a covering obtained from the trivial covering by first
  applying $2$-refinement and then $3$-refinement to one of the
  resulting index~$2$ lattices.  (Such a covering has
  type~$(2,2,(6,6,6,6))$ in the notation of~\Cref{sec:small-size}.)
  Two of the three lattices of index~$2$ are themselves in the
  covering, while the third contains (by construction) each of the
  index-$6$ lattices in the covering.  These four index-$6$ lattices
  are each contained in one of the four distinct index-$3$ lattices,
  again by the refinement construction.
\end{example}

The following is the lattice version of~\cite[Corollary 1]{Simpson}.

\begin{corollary}
  \label{cor:Simpson-cor-1-lattice}
  Let~$\C$ be an irredundant lattice covering and let~$L=L(v;d)\in\C$
  with $e=\ord_p(d)\ge1$ (as in~\Cref{thm:Simpson-theorem-1-lattice}).
  For~$1\le f\le e$, define the subcollection
  \[
  \C_0(f) = \{M\in\C : p^f\mid [\Z^2:M]\}.
  \]
  Then
  \begin{enumerate}[(i)]
  \item
    $|\C_0(f)| \ge G(p^e)-G(p^{f-1})+1$;
  \item
    if $L_s\in\L(p^e)$ for~$1\le s \le n$ are distinct, then the
    subcollection
    \[
    \C_0'(f) = \{M\in\C_0(f) \mid \text{$M$ is separated
      from~$L_s$ for~$1\le s\le n$}\}
    \]
    has size $|\C_0'(f)| \ge G(p^e)-G(p^{f-1})+1-n$.
  \end{enumerate}
\end{corollary}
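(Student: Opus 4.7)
The plan is to deduce both parts from the explicit construction of ``witness'' lattices in the proof of \Cref{thm:Simpson-theorem-1-lattice}, together with the structure of the strongly minimal covering $\C(v;p^e)$ introduced in \Cref{lem:lemma-for-thm-1-lattice}.

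For part (i), I would take the witnesses to be $L$ itself together with the lattices $L_{i,k}$ for $f\le k\le e$ (with $i$ in the appropriate range) produced by \Cref{thm:Simpson-theorem-1-lattice}. Each $L_{i,k}$ has index whose $p$-part is at least $p^k$, so for $k\ge f$ it lies in $\C_0(f)$; and $L$ lies in $\C_0(f)$ since $p^f \mid p^e$ divides the index of $L$. These witnesses are pairwise separated, hence distinct. Counting gives $1 + p + (p-1)(e-1) = G(p^e) + 1$ when $f=1$, and $1 + (p-1)(e-f+1) = G(p^e) - G(p^{f-1}) + 1$ when $f\ge 2$; in either case this matches the stated bound, using that $G(p^0) = 0$ and $G(p^{f-1}) = (f-1)(p-1)+1$ for $f\ge 2$.

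For part (ii), the key observation is that the witnesses in (i) correspond injectively to lattices in the strongly minimal covering $\C(v;p^e)$ via the rule $A(L) = L(v;p^e)$ and $A(L_{i,k}) = P_{i,k}(v;p)$, and that each witness~$W$ satisfies $W\subseteq A(W)$ (for $L$, because $p^e$ divides the index of $L$, so $L = L(v;d) \subseteq L(v;p^e)$; for $L_{i,k}$, by construction in \Cref{thm:Simpson-theorem-1-lattice}). Moreover, each $L_s\in\L(p^e)$ has all its primitive vectors inside a unique lattice $T(L_s)\in\C(v;p^e)$: strong minimality of $\C(v;p^e)$ partitions the primitive vectors of $\Z^2$, and any two primitive vectors in $L_s$ differ by $\wedge$-product divisible by $p^e$, hence (by \Cref{lem:same-lattice}) determine the same chain of $p$-power cocyclic lattices and therefore land in the same lattice of $\C(v;p^e)$.

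Now, if a witness $W$ fails to be separated from $L_s$, any shared primitive vector lies in $W\subseteq A(W)$ as well as in $L_s$, so $A(W)$ is the lattice of $\C(v;p^e)$ containing $L_s$'s primitive vectors; that is, $A(W) = T(L_s)$. Since $A$ is injective, this determines $W$ uniquely from $L_s$, so each of the $n$ lattices $L_s$ excludes at most one witness from $\C_0'(f)$, giving the bound $|\C_0'(f)| \ge G(p^e) - G(p^{f-1}) + 1 - n$. The only mildly delicate step is verifying that the partition property of $\C(v;p^e)$ really forces $A(W) = T(L_s)$, but this is immediate from strong minimality once the containments $W\subseteq A(W)$ are in place; so no serious obstacle is expected.
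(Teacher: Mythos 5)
Your proposal is correct and follows essentially the same route as the paper: part (i) counts exactly the same witnesses ($L$ together with the $L_{i,k}$ for $f\le k\le e$ from \Cref{thm:Simpson-theorem-1-lattice}), and part (ii) likewise uses the containments of these witnesses in the members of the strongly minimal covering $\C(v;p^e)$ to show each $L_s$ can meet at most one witness. The only cosmetic difference is that you pin down the relevant member of $\C(v;p^e)$ via the strong-minimality partition of primitive vectors, whereas the paper argues via \Cref{cor:contain-or-separate} that $L_s$ is contained in at most one of the pairwise separated lattices $P_{i,k}(v;p)$, $L(v;p^e)$ — the same idea in different words.
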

\begin{proof}
  As in~\Cref{thm:Simpson-theorem-1-lattice}, $\C$ contains
  lattices~$L_{i,k}$ for $f\le k\le e$ and $1\le i\le p-1$ or
  (when~$f=k=1$) $1\le i\le p$; they all belong to~$\C_0(f)$, and the
  number of them is~$G(p^e)-G(p^{f-1})$.  Also, $L\in\C_0(f)$,
  since~$f\le e$.  Let~$\C_1(f)$ be the subcollection of~$\C_0(f)$
  consisting of the~$L_{i,k}$ (for~$i,k$ as above) together with~$L$.
  Then~$|\C_0(f)| \ge |\C_1(f)| = G(p^e)-G(p^{f-1}) +1$, giving~(i).

  For (ii), we claim that each of the~$n$ lattices~$L_s$ can intersect
  cyclically at most one of the lattices in~$\C_1(f)$. If~$L_s$ is not
  separated from~$L_{i,k}$ for some~$s\le n$, then~$L_s$ is also not
  separated from~$P_{i,k}$ (since~$L_{i,k}\subseteq P_{i,k}$),
  so~$L_s\subseteq P_{i,k}$ by~\Cref{cor:contain-or-separate}, since
  their indices are~$p^e$ and~$p^k$ with~$e\ge k$.  Similarly, if
  $L_s$ is not separated from~$L=L(v;d)\subseteq L(v;p^e)$, then~$L_s
  = L(v;p^e)$.  Hence the claim follows
  from~\Cref{lem:lemma-for-thm-1-lattice}~(ii).

  From the claim, we deduce that~$|\C_1(f)\setminus(\C_0'(f) \cap
  \C_1(f))|\le n$. Therefore the result follows from the bounds
  $|\C_0'(f)| \geq |\C_0'(f) \cap \C_1(f)| \geq |\C_1(f)| - n$.
\end{proof}

The lattice analogue of Simpson's Theorem~2 in~\cite{Simpson} requires
a result to take the place of Simpson's Lemma~3.  For this we need to
set up a correspondence between coverings of~$\Z^2$ with index
lcm~$N$, and coverings of a fixed cocyclic lattice~$L$ of index~$M$,
coprime to~$N$: by a~\emph{covering of~$L$} we mean a collection of
sublattices of~$L$ whose union is~$L$.  The definitions of minimality,
strong minimality and size of such relative coverings extend to this
more general situation in an obvious way.  Note that
since~$\gcd(M,N)=1$ here, by~\Cref{prop:CRT-lattice}, the condition
that a sublattice~$L'\subseteq L$ of relative index~$N$ be cocyclic
(meaning, as always, that~$\Z^2/L'$ is cyclic) is equivalent to~$L/L'$
being cyclic. We call such a sublattice of~$L$ a ``cyclic sublattice''
of~$L$.

The criterion of \Cref{lem:primitive-cover} extends to this relative situation:
\begin{lemma}
  \label{lem:relative-primitive-cover}
  Let~$L\in\L(M)$, and let $\C=\{L_1,\dots,L_n\}$ be a collection of
  cyclic sublattices of~$L$ with relative indices $N_i=[L:L_i]$ all
  coprime to~$M$. Write each $L_i=L\cap L_i'$ with $L_i'\in\L(N_i)$,
  and set $\C'=\{L_1',\dots,L_n'\}$.  Then the following are
  equivalent:
  \begin{enumerate}[(i)]
  \item $\C'$ covers $\Z^2$;
  \item $\C$ covers $L$;
  \item every primitive vector in~$L$ belongs to at least one of the~$L_i$.
  \end{enumerate}
\end{lemma}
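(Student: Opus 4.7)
The plan is to establish the equivalence via the cycle (i)$\Rightarrow$(ii)$\Rightarrow$(iii)$\Rightarrow$(i). For (i)$\Rightarrow$(ii), I would intersect both sides of $\Z^2 = \bigcup_i L_i'$ with $L$ to obtain $L = \bigcup_i (L\cap L_i') = \bigcup_i L_i$. The implication (ii)$\Rightarrow$(iii) is trivial, since primitive vectors in $L$ are a subset of $L$.

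The substantive step is (iii)$\Rightarrow$(i). By~\Cref{lem:primitive-cover}, it suffices to show every primitive $v\in\Z^2$ lies in some $L_i'$. To this end, write $L = L(v_0;M)$ for some primitive $v_0$ and set $N = \lcm(N_1,\ldots,N_n)$; since each $N_i$ is coprime to $M$, so is $N$. I would apply~\Cref{prop:CRT-P1} to get the bijection $\P(MN)\to\P(M)\times\P(N)$, then invoke~\Cref{lem:cd-coprime} to produce a primitive vector $w\in\Z^2$ whose images in $\P(M)$ and $\P(N)$ are $(v_0)_M$ and $(v)_N$ respectively. The first condition gives $v_0\wedge w\equiv 0\pmod M$, hence $w\in L$ by~\Cref{lem:same-lattice}; the second gives $v\wedge w\equiv 0\pmod{N_i}$ for every $i$. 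Applying (iii) to the primitive vector $w\in L$, I obtain $w\in L_i$ for some $i$, and hence $w\in L_i' = L(v_i;N_i)$, so $v_i\wedge w\equiv 0\pmod{N_i}$. Combining the congruences $v\wedge w\equiv 0\pmod{N_i}$ and $v_i\wedge w\equiv 0\pmod{N_i}$ via~\Cref{lem:same-lattice} (equivalently, transitivity of equality in $\P(N_i)$) yields $v\wedge v_i\equiv 0\pmod{N_i}$, so $v\in L_i'$ as required.

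The main obstacle is precisely the (iii)$\Rightarrow$(i) step, since it requires bridging from information about primitive vectors of $L$ (hypothesis (iii)) to a statement about all primitive vectors of $\Z^2$ (needed to invoke~\Cref{lem:primitive-cover}). The key device is the projective Chinese Remainder Theorem together with the coprimality hypothesis $\gcd(M,N_i)=1$: this allows me to independently prescribe the $\P(M)$-image of $w$ (to force $w\in L$) and its $\P(N)$-image (to match that of $v$), producing the auxiliary primitive vector $w\in L$ through which (iii) transfers to the desired conclusion about $v$.
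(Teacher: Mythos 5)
Your proof is correct. All three implications go through: (i)$\implies$(ii) by intersecting with $L$, (ii)$\implies$(iii) trivially, and your (iii)$\implies$(i) argument is sound, since the auxiliary vector $w$ produced via \Cref{prop:CRT-P1} and \Cref{lem:cd-coprime} is a primitive vector of $\Z^2$ lying in $L$ with $(w)_{N}=(v)_{N}$, so applying (iii) to $w$ and transitivity of the wedge congruence (as in \Cref{lem:same-lattice}) gives $v\in L_i'$. The route differs from the paper's in its packaging: the paper verifies the lattice criterion of \Cref{prop:covering-criterion}(i), showing that each $L'\in\L(N)$ with $N=\lcm(\C')$ is contained in some $L_i'$, by noting that $L'\cap L$ is cocyclic (coprimality via \Cref{prop:CRT-lattice}) and hence contains a primitive vector to which (iii) applies, after which $L'=L(v;N)\subseteq L(v;N_i)=L_i'$; you instead verify the primitive-vector criterion of \Cref{lem:primitive-cover} directly, vector by vector, manufacturing the lift $w$ explicitly through the projective Chinese Remainder Theorem and the surjectivity of $\P(\Z)\to\P(MN)$. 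These are two dialects of the same coprimality mechanism --- your $w$ is precisely a primitive vector of $L(v;N)\cap L$ --- but the paper's version is slightly shorter because cocyclicity of $L'\cap L$ delivers the lift in one step and avoids manipulating the wedge congruences, whereas yours is more explicit at the level of $\P(\Z)$ and works uniformly without invoking the covering criterion for lattices.
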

\begin{proof}
That (i) $\implies$ (ii) $\implies$ (iii) is trivial, so assume (iii).
Note that while every vector in~$L$ is a multiple of a primitive
vector, that primitive vector need not also belong to $L$; this is why
the proof of~\Cref{lem:primitive-cover} will not work directly here.

Let $N=\lcm(\C')$, which by hypothesis is coprime to~$M$. To show
that $\C'$ covers~$\Z^2$, it suffices to show that every
lattice~$L'\in\L(N)$ is contained in one of the~$L_i'$.  By the
coprimality of the indices, $L'\cap L$ is cocyclic and so contains a
primitive vector $v$. By (iii), $v\in L_i$ for some~$i$, and $L' =
L(v;N) \subseteq L(v;N_i) = L_i'$.
\end{proof}

\begin{proposition}
  \label{prop:relative-coverings}
Let $L\in\L(M)$.  Then there is a bijection between
\begin{enumerate}
\item the set of coverings $\C$ of $L$ by cocyclic lattices of
  relative index coprime to~$M$, and
\item the set of coverings $\C'$ of $\Z^2$ by lattices of absolute
  index coprime to~$M$,
\end{enumerate}
given by mapping~$\C'\mapsto\C$ via the bijection~$L'\mapsto L\cap L'$.  Moreover, $\C$ is irredundant if and only if $\C'$ is.
\end{proposition}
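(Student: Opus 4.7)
The plan is to use the Chinese Remainder Theorem for lattices (\Cref{prop:CRT-lattice}) to set up the bijection object-wise, and then to invoke~\Cref{lem:relative-primitive-cover} to transfer the covering and irredundancy conditions.

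First I would define the inverse map explicitly. Given a covering $\C=\{L_1,\dots,L_n\}$ of $L$ by cocyclic sublattices of relative indices $N_i$ coprime to~$M$, each $L_i$ has absolute index $MN_i$ in~$\Z^2$ and is cocyclic (i.e.\ $\Z^2/L_i$ is cyclic): indeed, the relative cyclicity of $L/L_i$ combined with cyclicity of $\Z^2/L$ and the coprimality $\gcd(M,N_i)=1$ gives cyclicity of $\Z^2/L_i$ by the ring-theoretic CRT. Then~\Cref{prop:CRT-lattice} applied with the pair of coprime indices $(M,N_i)$ produces a unique $L_i'\in\L(N_i)$ with $L_i=L\cap L_i'$, and the assignment $L_i\mapsto L_i'$ is inverse to $L'\mapsto L\cap L'$ (again by the uniqueness half of~\Cref{prop:CRT-lattice}). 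So the two sets described in the proposition are in bijection as collections, before any covering condition is imposed.

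Next, I would verify that the covering property is respected under this bijection. This is exactly the content of~\Cref{lem:relative-primitive-cover}: writing $L_i = L\cap L_i'$ with $\gcd(M,N_i)=1$, the equivalence (i)$\iff$(ii) in that lemma says $\C'=\{L_i'\}$ covers $\Z^2$ if and only if $\C=\{L_i\}$ covers $L$. So $\C$ is a covering of $L$ (in the relative sense) if and only if $\C'$ is a covering of~$\Z^2$, and the bijection restricts to a bijection of coverings.

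Finally, I would handle the irredundancy claim, which I expect to be the most delicate step. In the forward direction, if $\C'$ is redundant, so that $L_{i_0}'\subseteq\bigcup_{j\ne i_0}L_j'$ for some $i_0$, then intersecting both sides with $L$ yields $L_{i_0}\subseteq\bigcup_{j\ne i_0}L_j$, so $\C$ is redundant. For the converse, suppose $\C$ is redundant, so that $\C\setminus\{L_{i_0}\}$ still covers $L$; applying~\Cref{lem:relative-primitive-cover} to the subcollection $\{L_j:j\ne i_0\}$ (whose relative indices are still coprime to~$M$) shows that $\C'\setminus\{L_{i_0}'\}$ covers $\Z^2$, and in particular contains $L_{i_0}'$, so $\C'$ is redundant. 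Thus irredundancy is preserved both ways, completing the proof. The main obstacle will be just making sure that~\Cref{lem:relative-primitive-cover} is applicable to the sub-collections arising in the redundancy argument, which it is because removing a lattice does not affect the coprimality of the remaining relative indices with~$M$.
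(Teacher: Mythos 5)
Your proof is correct and takes essentially the same route as the paper's: \Cref{prop:CRT-lattice} supplies the object-wise bijection $L_i\leftrightarrow L_i'$ and \Cref{lem:relative-primitive-cover} transfers the covering property between $\C$ and $\C'$. The only difference is that you spell out the irredundancy transfer (which the paper dismisses as clear), and your argument for it --- intersecting with $L$ in one direction and applying \Cref{lem:relative-primitive-cover} to the subcollection in the other --- is sound.
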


\begin{proof}
  Given~$\C'$ as in~(2), set~$\C=\{L\cap L'\mid L'\in\C'\}$. Since the
  indices of~$L$ and~$L'$ for~$L'\in\C'$ are coprime, each~$L\cap L'$
  is cocyclic, each is contained in~$L$, and~$\C$ is a covering
  of~$L$ since every~$v\in L$ belongs to at least one~$L'\in\C'$.

  Conversely, given~$\C$ as in~(1), write each~$L_i\in\C$ as
  $L_i=L\cap L_i'$ (as in~\Cref{prop:CRT-lattice}) where
  $[\Z^2:L_i']=[L:L_i]$ is coprime to~$M$, and set $\C'=\{L_i'\mid
  L_i\in\C\}$. This is a covering of~$\Z^2$
  by~\Cref{lem:relative-primitive-cover}.

  It is clear that these maps are inverses of each other, and that
  $\C$ is irredundant if and only if $\C'$ is.
\end{proof}

Now we come to the main result of this section.  It is the analogue
for lattices of Simpson's Theorem~2, and is precisely our~\Cref{thm:B}.

\begin{theorem}
  \label{thm:Simpson-theorem-2-lattice}
  Let~$\C$ be an irredundant covering, $N=\lcm(\C)$, and let~$D\mid
  N$, $D\not=N$.  Then
  \[
  |\{L\in\C : [\Z^2:L]\nmid D\}| \ge 1 + G(N) - G(D).
  \]
  In particular, if~$\C$ is not the trivial covering, then~$N>1$, and
  taking~$D=1$ gives
  \[
  |\C| \ge 1 + G(N).
  \]
\end{theorem}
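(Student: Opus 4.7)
The plan is to adapt Simpson's inductive strategy from~\cite{Simpson} to the lattice setting, using \Cref{cor:Simpson-cor-1-lattice}, the lattice analogue of Simpson's Corollary~1, as the main engine. I would proceed by induction on $\omega(N/D)$, the number of distinct prime factors of the quotient $N/D$.

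For the base case $\omega(N/D) = 1$, write $N/D = p^k$ for some prime $p$. The lattices in $\C$ with index not dividing $D$ are then exactly those whose $p$-adic index valuation is at least $f := \ord_p(D) + 1$, since all other prime valuations coincide in $N$ and $D$. Since $\lcm(\C) = N$, there is some $L \in \C$ with $\ord_p([\Z^2 : L]) = e := \ord_p(N)$. Applying \Cref{cor:Simpson-cor-1-lattice}(i) to this $L$ produces at least $G(p^e) - G(p^{f-1}) + 1$ such lattices, which equals $1 + G(N) - G(D)$ by hypothesis on $N/D$.

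For the inductive step $\omega(N/D) \ge 2$, pick a prime $p$ with $e := \ord_p(N) > d := \ord_p(D)$ and set $D' := D \cdot p^{e-d}$. Then $D \mid D' \mid N$ with $D' \neq N$, $\omega(N/D') = \omega(N/D) - 1$, and by additivity of $G$ one has $G(D') - G(D) = G(p^e) - G(p^d)$. The inductive hypothesis applied to $D'$ yields $|\{L \in \C : [\Z^2:L] \nmid D'\}| \geq 1 + G(N) - G(D')$. Partitioning the lattices with index not dividing $D$ into those that additionally do not divide $D'$ and those that divide $D'$ but not $D$, it suffices to exhibit at least $G(p^e) - G(p^d)$ lattices in the second class; any such lattice automatically has $p$-adic index valuation at least $d+1$ and prime-to-$p$ index part dividing the prime-to-$p$ part of $D$.

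To construct these additional lattices I would apply \Cref{cor:Simpson-cor-1-lattice}(ii) for the prime $p$ with $f = d + 1$, choosing the $n$ separation lattices $L_1, \ldots, L_n \in \L(p^e)$ to block precisely those lattices already counted by the induction hypothesis whose $p$-adic valuation equals $e$, so that the new lattices produced genuinely divide $D'$. The hard part will be this cross-prime bookkeeping: verifying that the bound $G(p^e) - G(p^d) + 1 - n$ from the corollary still delivers at least $G(p^e) - G(p^d)$ lattices after the exclusion, and that each surviving lattice in fact satisfies the required prime-to-$p$ constraint rather than being counted again by the inductive hypothesis. This is the point where the lattice setting departs from Simpson's congruence setting and where I expect the ``new ideas'' flagged at the start of the section to enter; a plausible tactic is to invoke \Cref{prop:CRT-lattice} or \Cref{prop:relative-coverings} in order to decouple the $p$-adic and prime-to-$p$ structure of each lattice in $\C$ and control the two contributions independently.
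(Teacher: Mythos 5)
Your base case is fine, and in fact slightly stronger than the paper's: when $\omega(N/D)=1$, the lattices whose index fails to divide $D$ are exactly those with $\ord_p$ of the index at least $f=\ord_p(D)+1$, and \Cref{cor:Simpson-cor-1-lattice}(i), applied to a lattice realising $\ord_p=\ord_p(N)$, gives precisely the required count. The inductive step, however, rests on a false reduction. You partition $\{L\in\C: [\Z^2:L]\nmid D\}$ into the lattices with index not dividing $D'=Dp^{e-d}$ (handled by induction) and those with index dividing $D'$ but not $D$, and you assert it suffices to exhibit at least $G(p^e)-G(p^d)$ lattices of the second kind. But no such lattices need exist. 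Take $\C=\L(6)$, the full index-$6$ covering (strongly minimal, hence irredundant, of size $12$), with $N=6$ and $D=1$: for either choice $p=2$ or $p=3$ the second class consists of lattices of index exactly $p$, and there are none, while $G(p)-G(1)=p\ge2$. The same failure occurs for genuinely minimal coverings of minimal size, e.g.\ the size-$7$ covering of type $(3,3,3,(6,6,(12,12)))$ with $D=1$, $p=2$, which has no lattice of index $2$ or $4$. In these situations the theorem holds only because the first class overshoots its inductive lower bound, and your argument has no mechanism to detect or exploit that surplus. Nor can \Cref{cor:Simpson-cor-1-lattice}(ii) rescue the step: it produces lattices with controlled $p$-part of the index but says nothing about the prime-to-$p$ part, so separation from lattices in $\L(p^e)$ cannot force membership in your second class; moreover its bound $G(p^e)-G(p^{f-1})+1-n$ leaves room for at most $n\le1$ blocking lattices, whereas blocking even one already-counted lattice of $p$-valuation $j<e$ would require excluding up to $p^{e-j}$ elements of $\L(p^e)$.

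The paper's proof is organised quite differently to get around exactly this obstruction. It inducts on $\omega(N)$ and, writing $\lcm(\C)=p^eN$ with $p\nmid N$, splits every lattice as $L=L'\cap L''$ into its prime-to-$p$ and $p$-power parts (\Cref{prop:CRT-lattice}). For each of the $t=\psi(p^e)$ lattices $L_s\in\L(p^e)$ it builds, via \Cref{lem:relative-primitive-cover} and \Cref{prop:relative-coverings}, an irredundant covering $\C_s'$ of $\Z^2$ with indices coprime to $p$, and applies the inductive hypothesis to each $\C_s'$ with a telescoping chain of divisors $R_0=D$, $R_s=\lcm(R_{s-1},N_s)$, $D_s=\gcd(R_{s-1},N_s)$, so that the contributions $G(N_s)-G(D_s)$ sum (by \Cref{lem:G-additive}) to $G(N)-G(D)$ plus the number $n$ of nontrivial steps. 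This surplus $+n$ from the prime-to-$p$ side is then exactly what pays for the loss $-n$ in \Cref{cor:Simpson-cor-1-lattice}(ii), where the $n$ separation lattices are the $L_s$ with $s\in S$; disjointness of the two families is guaranteed by the separation condition. That compensation mechanism between the two prime blocks is the essential ``new idea'' your outline is missing, and without it the proposed partition argument cannot be completed.
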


\begin{proof}
The proof is by induction on~$\omega(N)$, the number of distinct
primes dividing~$N$.  The case $N = 1$ is true vacuously.

If~$\omega(N)=1$, then~$N=p^e$ and~$D=p^k$ with~$p$ prime
and~$e>k\ge0$.  Note that~\Cref{thm:Simpson-theorem-1-lattice} already
gives~$|\C|\ge G(N)+1$ in this case, but we need the version with
general~$D$ for the inductive step later.
By~\Cref{cor:Simpson-cor-1-lattice} with~$f=k+1$ and~$n=0$, we have
\[
|\{L\in\C: [\Z^2:L]\nmid D\}| = |\{L\in\C: p^f\mid[\Z^2:L]\}|
\ge G(N)-G(D)+1.
\]

For the inductive step, we change notation so that~$\lcm(\C)=p^eN$,
where~$p\nmid N$ and~$e\ge1$, and the inductive hypothesis is that the
result holds for irredundant coverings with at most~$\omega(N)$ primes
in their index lcm.  Let~$p^fD$ be a divisor of~$p^eN$
with~$p^fD\not=p^eN$, where~$D\mid N$ and either~$f<e$ or~$D<N$ (or
both).  Let
\[
\C^* = \{L\in\C : [\Z^2:L]\nmid p^fD\}.
\]
To complete the induction we must show that
\[
|\C^*| \ge G(p^eN) - G(p^fD) + 1 = (G(N)-G(D)) + (G(p^e)-G(p^f)) + 1;
\]
here we have used the additivity of~$G$.

Every lattice in~$\C$ has the form~$L=L(v;p^kM)$ where $k\le e$
and~$M\mid N$ (so~$p\nmid M$), so is the intersection $L = L'\cap
L''$, where $L'=L(v;M)$ and $L''=L(v;p^k)$ are cocyclic lattices with
indices~$M$ (coprime to~$p$) and~$p^k$ respectively.  We will use this
association and notation $L\leftrightarrow(L',L'')$ repeatedly below.

Enumerate the lattices in~$\L(p^e)$ as~$\L(p^e)=\{L_s\mid 1\le s\le
t\}$ where~$t=\psi(p^e)$.  
For each~$s$ with $1\le s
\le t$, we will use~$\C$ to construct an irredundant covering~$\C_s'$
of~$\Z^2$ with $\lcm(\C_s')\mid N$.  We then apply the inductive
hypothesis to certain of these coverings.

Fix $s$ with $1\le s\le t$.  For~$L\in\C$, writing~$L=L'\cap L''$ as
above, we have~$L\cap L_s = L'\cap(L''\cap L_s)$. Since~$L'$ has index
coprime to~$p$, this is cocyclic if and only if~$L''\cap L_s$ is,
which is if and only if~$L''\supseteq L_s$
by~\Cref{cor:contain-or-separate}, and in this case, $L\cap L_s=L'\cap
L_s$.

Define
\begin{align*}
\A_s &= \{L\in\C\mid L\cap L_s\ \text{is cocyclic}\}; \\
\A_s' &= \{L'\mid L=L'\cap L'', L\in\A_s\}; \\
\A_s'' &= \{L'\cap L_s\mid L=L'\cap L'', L\in\A_s\}.
\end{align*}
There is a natural map~$\phi: \A_s \rightarrow \A_s'$ given by~$L=L'\cap L'' \mapsto L'$ and there is a natural map~$\rho: \A_s' \rightarrow \A_s''$ given by~$L' \mapsto L'\cap L_s$.  Note that~$\phi$ is clearly surjective and that~$\psi$ is a bijection by~\Cref{prop:CRT-lattice}.  Also note that, in the definition of~$\A_s'$, the
lattice~$L\in\C$ such that~$L=L'\cap L''$ is uniquely determined
by~$L'$ (where, as always, the index of~$L'$ is coprime to~$p$ and
that of~$L''$ is a power of~$p$), by the irredundancy of~$\C$ and the
fact that the lattices containing~$L_s$ are totally ordered by
inclusion.  It follows that~$\phi$ and~$\rho$ give bijections~$\A_s \leftrightarrow \A_s'
\leftrightarrow \A_s''$.

We claim that~$\A_s''$ is a covering of~$L_s$ by cyclic
sublattices. First, each lattice in~$\A_s''$ is (by definition) a
sublattice of~$L_s$ with relative index coprime to~$p$.  Let~$v\in L_s$
be primitive.  Since~$\C$ is a covering, $v\in L$ for some~$L\in\C$,
and~$L\cap L_s$ is cocyclic (as it contains~$v$),
so~$L\in\A_s$.  Then~$v\in L\cap L_s=L'\cap L_s \in\A_s''$.  By~\Cref{lem:relative-primitive-cover}, it follows that~$\A_s''$ is a covering of~$L_s$ and that~$\A_s'$ is a covering of~$\Z^2$.

However, the coverings~$\A_s'$ of~$\Z^2$ and~$\A_s''$ of~$L_s$ may
not be irredundant.  In order to apply the inductive hypotheses, we let~$\C_s'$ be an irredundant subcovering of $\A_s'$. We define~$\C_s'' := \rho(\C_s')$ and~$\C_s := \phi^{-1}(\C_s')$.  By~\Cref{prop:relative-coverings}, $\C_s''$ is an irredundant covering of~$L_s$.

Set $N_s=\lcm(\C_s')$. Then $N_s \mid N$, and~$|\C_s| = |\C_s'|$.

We next claim that the union of~$\{\C_s\mid 1\le s\le t\}$, which is
certainly a subcollection of~$\C$, is a covering.  The irredundancy
of~$\C$ then implies that~$\C = \bigcup\{\C_s\mid 1\le s\le t\}$.  Let~$v \in \Z^2$. We have~$v\in L_s$ for some~$s$, since the collection of all
the~$L_s$ is a covering.  Then, since~$\C_s''\subseteq\A_s''$ is a
covering of~$L_s$, there exists (by definition of~$\A_s''$)~$L'\cap L_s = L\cap L_s \in \C_s''$ with~$L\in\A_s$ such that~$v\in L'\cap L_s = L\cap L_s$.  Moreover, by definition of~$\phi$ and~$\rho$, we see that~$L' \cap L_s \in \C_s''$ implies that~$L \in \C_s$.

Hence $N=\lcm(N_1,N_2,\dots,N_t)$.

Now set $R_0=D$, and for~$s=1,2,\dots, t$ in turn define the following:
\begin{itemize}
\item $R_s=\lcm(R_{s-1},N_s)$; then~$R_{t}=\lcm(D,N_1,N_2,\dots,N_t)=N$.
\item $D_s=\gcd(R_{s-1},N_s)$, so $R_s/R_{s-1}=N_s/D_s$. Note that
  \begin{equation}
    \label{eqn:G-additive}
  G(R_s) - G(R_{s-1}) = G(N_s) - G(D_s)
  \end{equation}
  by~\Cref{lem:G-additive}.
\item the collection
\begin{align*}
  \B_s
  &=
  \{L = L'\cap L''\in\C_s :  \text{$[\Z^2:L']=M$ with $M\nmid  D_s$}\} \\
  &=
  \{L = L'\cap L''\in\C_s : \text{$[\Z^2:L']=M$ with $M\nmid R_{s-1}, M\mid R_s$}\}.
\end{align*}
  Equality holds here because, firstly, for all~$L = L'\cap
  L''\in\C_s$ with~$[\Z^2:L']=M$ we have~$M\mid R_s$, since $M\mid
  N_s$ (by definition of~$N_s$) and $N_s\mid R_s$ (by definition
  of~$R_s$); and secondly, since~$D_s=\gcd(R_{s-1},N_s)$, we
  have~$M\mid R_{s-1}\iff M\mid D_s$.
\end{itemize}
The second expression for~$\B_s$ shows that the collections~$\B_s$ are
pairwise disjoint, since $R_{s-1}\mid R_s$ for all~$s$, so a lattice
in~$\B_s$ cannot belong to any~$\B_t$ for~$t>s$.  Some of the~$\B_s$
may be empty; in fact this certainly happens when~$D_s=N_s$, for
then~$R_s=R_{s-1}$.  Let~$S=\{s\mid D_s\not=N_s\}$, and $n=|S|$.

As~$\B_s\subseteq\C_s$, for each~$L\in\B_s$ we have $L\cap L_s =
L'\cap L_s$ where $[\Z^2:L']$ is coprime to~$p$, and the
subcollection~$\B_s'\subseteq\C_s'$ consisting of these lattices~$L'$
is
\[
\B_s' = \{L'\in\C_s' \mid [\Z^2:L'] \nmid D_s\}.
\]
Since $\lcm(\C_s')=N_s$, and $D_s$ is a proper divisor of~$N_s$
for~$s\in S$, and as the covering $\C_s'$ is irredundant, we can apply
the induction hypothesis to $\C_s'$, $N_s$ and~$D_s$ for~$s\in S$ to
obtain
\[
|\B_s| = |\B_s'| \ge G(N_s) - G(D_s) +1 \qquad\text{for all~$s\in S$}.
\]
Adding over all~$s\in S$, using~(\ref{eqn:G-additive}) and the fact
that the~$\B_s$ are pairwise disjoint, gives
\begin{align} \label{eqn:lb-Bs}
  \left|\bigcup_{1\le s\le t} \B_s \right| = \sum_{s\in S}|\B_s|
  & \ge \sum_{s=1}^{t}(G(N_s)-G(D_s))+n \notag \\
  & = \sum_{s=1}^{t}(G(R_s)-G(R_{s-1}))+n \notag \\
  & = G(N) - G(D) + n,
\end{align}
since~$R_{t}=N$ and~$R_0=D$.

Recall that
\[
\C^* = \{L\in\C : [\Z^2:L]\nmid p^fD\}.
\]
Write~$\C^* = \C_1^* \cup \C_2^*$, where
\[
\C_1^* = \{ L\in\C : p^{f+1}\mid[\Z^2:L]\},
\]
and
\[
\C_2^* = \{ L\in\C : \text{$[\Z^2:L]=p^kM$ with~$p\nmid M$ and~$M\nmid D$}\}.
\]
These may not be disjoint, but we will identify disjoint subsets of
each and find lower bounds for the size of these subsets to obtain a
lower bound for the size of~$\C^*$.

First consider~$\C_1^*$.  By definition of~$\C_s$ (which, we recall,
is a subcollection of~$\A_s$), the lattices in~$\C_s$ are not separated from~$L_s$. Hence
\begin{align*}
  \C_1^* &\supseteq \C_1^* \setminus\cup_{s\in S}\C_s \\
  &\supseteq \{ L\in\C:\text{$p^{f+1}\mid [\Z^2:L]$
    and $L$ is separated from~$L_s$ for all $s\in S$}\}\\
  &=\C_1^{**}\quad\text{say},
\end{align*}
and so by~\Cref{cor:Simpson-cor-1-lattice} we have
\begin{equation}
  \label{eqn:C1s-bound}
  |\C_1^*|\ge|\C_1^{**}| \ge G(p^e)-G(p^f)+1-n.
\end{equation}

Secondly, note that if~$L\in\C_s$ has index~$p^kM$ with~$p\nmid M$
and~$M\mid D$, then~$M\mid R_{s-1}$, so~$L\notin\B_s$.  Hence
\begin{align*}
\C_2^* &\supseteq \cup_{s\in S}\{ L = L'\cap L''\in\C_s : \text{$[\Z^2:L']=M$
  with~$M\nmid D$}\} \\ &\supseteq\cup_{s\in S}\B_s.
\end{align*}
Together with~(\ref{eqn:lb-Bs}), this gives
\begin{equation}
  \label{eqn:C2s-bound}
|\C_2^*| \ge \sum_s|\B_s| \ge G(N) - G(D)+n.
\end{equation}

Adding~(\ref{eqn:C1s-bound}) and~(\ref{eqn:C2s-bound}), noting
that~$\C_1^{**}$ and~$\cup_{s\in S}\B_s$ are disjoint, we obtain
\[
|\C^*| \ge G(N)-G(D) + G(p^e)-G(p^f) + 1 = G(p^eN)-G(p^fD)+1
\]
as required.
\end{proof}

\begin{corollary}
  \label{cor:finitely-many-minimal}
  For each~$n\ge1$, the number of irredundant coverings of size~$n$ is
  finite.
\end{corollary}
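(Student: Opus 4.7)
The plan is to combine Theorem~\ref{thm:Simpson-theorem-2-lattice} with the finiteness of the level set $\{N \ge 1 : G(N) \le n-1\}$. First I would observe that if $\C$ is any irredundant covering of size $n$ and $N = \lcm(\C)$, then the theorem gives $n \ge 1 + G(N)$, and hence $G(N) \le n - 1$.

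Next I would show that only finitely many positive integers $N$ satisfy $G(N) \le n - 1$. Writing $N = \prod_{i=1}^{t} p_i^{e_i}$, additivity of $G$ gives $G(N) = \sum_{i=1}^{t} G(p_i^{e_i})$, and since $G(p^e) = e(p-1) + 1 \ge p$ whenever $e \ge 1$, each prime factor satisfies $p_i \le n - 1$ and each exponent satisfies $e_i \le (n - 2)/(p_i - 1)$. Hence both the primes occurring in $N$ and their multiplicities are bounded, so $N$ takes only finitely many values.

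Finally, by Corollary~\ref{cor:all-cocyclic} every lattice in an irredundant covering is cocyclic, and its index divides $\lcm(\C) = N$. So each such lattice belongs to the finite set $\bigcup_{d \mid N} \L(d)$, which by Corollary~\ref{cor:P1N-LcdN-bijection} has cardinality $\sum_{d \mid N} \psi(d)$. For each admissible $N$ there are only finitely many ways to choose $n$ lattices from this finite pool; summing over the finitely many possible values of $N$ gives a finite total. The whole argument is essentially immediate once Theorem~\ref{thm:Simpson-theorem-2-lattice} is available, so I do not anticipate any serious obstacle here; the real work was already done in proving that size/index bound.
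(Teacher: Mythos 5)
Your proposal is correct and follows essentially the same route as the paper: apply \Cref{thm:Simpson-theorem-2-lattice} to bound $G(\lcm(\C))$ by $n-1$, deduce bounds on the primes and exponents (hence on the possible indices), and conclude since there are only finitely many lattices of each index. Your write-up just makes the prime/exponent bounds and the counting of lattices of index dividing $N$ slightly more explicit than the paper does.
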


\begin{proof}
  For a fixed size~$n$, \Cref{thm:Simpson-theorem-2-lattice} bounds
  both the primes~$p$ which can divide the indices in the covering and
  the maximal exponent of each~$p$. Hence the lattice indices are
  bounded. As there are only finitely many lattices of each
  index, the result follows.
\end{proof}


\section{Lattice coverings of small size}
\label{sec:small-size}
In this section we determine all minimal coverings of size up to~$8$.
For sizes~$7$ and~$8$, the completeness of our lists of coverings
relies on the use of a computer program~\cite{CK}, based on the same algorithm
as a similar program written by the second author which was used to
find all minimal coverings of size up to~$6$ for his
papers~\cite{FouvryKoymansI}, \cite{FouvryKoymansII},
and~\cite{FouvryKoymansIII}, which the first author reimplemented
(with some efficiency improvements based on the theory developed
above) in~\Sage~\cite{Sage}.

In~Tables\ref{tab:size-upto-6}, \ref{tab:size-7},
and~\ref{tab:size-8}, we list, in abbreviated form, all minimal
coverings of size up to~$8$.

\subsection{Additional constraints on minimal coverings}
The following two results will allow us to determine all minimal
coverings of size up to~$6$.

\begin{proposition}
  \label{prop:small-n-index-bounds}
  Let~$\C$ be an irredundant covering of size~$n$.
  \begin{enumerate}[(i)]
  \item
    If $n\le5$, then~$\lcm(\C) \in\{1,2,3,4,8\}$, and~$\C$ is a
    refinement of the trivial covering.
  \item
    If $6\le n\le10$, then~$\lcm(\C)$ has at most~$2$ prime factors.
  \end{enumerate}
\end{proposition}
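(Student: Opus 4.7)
The overall plan is to invoke Theorem~\ref{thm:Simpson-theorem-2-lattice} to reduce each part to an elementary enumeration of $N\ge1$ satisfying an upper bound on $G(N)$. Throughout, I will use additivity of $G$ together with $G(p^e)=e(p-1)+1$ and its immediate consequence $G(p^e)\ge G(p)=p$ for all $e\ge1$.

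For part (i), the bound $n\le5$ combined with Theorem~\ref{thm:Simpson-theorem-2-lattice} gives $G(N)\le4$. If $\omega(N)\ge2$, additivity forces $G(N)\ge G(2)+G(3)=5>4$, so $\omega(N)\le1$. Among prime powers, $G(p^e)=e(p-1)+1\le4$ forces $e(p-1)\le3$, which together with the trivial case $N=1$ admits precisely $N\in\{1,2,3,4,8\}$ (with corresponding $G$-values $0,2,3,3,4$). Each of these is a prime power, so Theorem~\ref{thm:all-p-power} immediately yields that $\C$ is obtained from the trivial covering by successive $p$-refinements.

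For part (ii), the bound $n\le10$ gives $G(N)\le9$. Suppose for contradiction that $\omega(N)\ge3$, and let $p_1<p_2<p_3$ be three distinct primes dividing $N$. Then by additivity of $G$ and the monotonicity $G(p^e)\ge G(p)=p$,
\[
G(N)\ge G(p_1)+G(p_2)+G(p_3)\ge G(2)+G(3)+G(5)=2+3+5=10,
\]
contradicting $G(N)\le9$. Hence $\omega(N)\le2$.

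There is no significant obstacle: both parts reduce to short arithmetic with $G$ once Theorem~\ref{thm:Simpson-theorem-2-lattice} is available. The only mild care needed is in the enumeration for part (i), where one should confirm that intermediate values of $N$ such as $6,9,12,16$ are all excluded by the bound $G(N)\le4$, which is immediate from the formulae above.
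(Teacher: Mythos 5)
Your proof is correct and follows essentially the same route as the paper: both deduce from Theorem~\ref{thm:Simpson-theorem-2-lattice} that $|\C|\ge 1+G(N)$, use (sub)additivity of $G$ to rule out two (resp.\ three) prime factors when $n\le5$ (resp.\ $n\le10$), enumerate the prime powers with $e(p-1)\le3$, and then invoke Theorem~\ref{thm:all-p-power} for the refinement claim.
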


\begin{proof}
  If~$\lcm(\C)$ has at least two (distinct) prime factors~$p,q$,
  then we have
  $$
  |\C|\ge p+q+1\ge6
  $$
  by~\Cref{thm:Simpson-theorem-2-lattice}. So if $n\le5$, then $\lcm(\C)$ is a prime
  power.  Similarly, if~$\lcm(\C)$ has three prime factors~$p,q,r$,
  then~$|\C|\ge p+q+r+1\ge 2+3+5+1=11$, implying~(ii).

  For~$n\le5$ with~$\lcm(\C)=p^e$, we have $5\ge n\ge e(p-1)+2 \ge
  p+1$, so~$p=2$ or~$3$ and $e\le3$ or $e=1$
  respectively. By~\Cref{thm:all-p-power}, $\C$ is a refinement of the
  trivial covering. This completes the proof of~(i).
\end{proof}

The first part of the next result is needed to complete the
determination of minimal coverings of size~$6$; the other two parts
give rise to additional minimal coverings of size~$7$ and~$8$.

\begin{proposition}
  \label{prop:lcm-pq}
  Let~$p$ and~$q$ be distinct primes, and~$\C$ a minimal covering
  with~$\lcm(\C)=pq$, so that~$|\C|\ge p+q+1$.
  \begin{enumerate}[(i)]
  \item
    If~$|\C|=p+q+1$, then~$\C$ is obtained either by applying
    one~$q$-refinement to~$\L(p)$ or one $p$-refinement to~$\L(q)$.
    There are~$p+q+2$ minimal coverings of this type, all strongly
    minimal.
  \item
    If~$|\C|=p+q+2$, then~$\C$ consists of all but two of the lattices
    in~$\L(p)$, and all but two of the lattices in~$\L(q)$, together
    with four from~$\L(pq)$.  There are~$\binom{p+1}{2}\binom{q+1}{2}$
    minimal coverings of this type, none strongly minimal.
  \item
    $|\C|=pq+2$ for all coverings~$\C$ obtained by taking one lattice
    of index~$p$, one of index~$q$, and~$pq$ of index~$pq$. There
    are~$(p+1)(q+1)$ of these, none strongly minimal.
  \end{enumerate}
\end{proposition}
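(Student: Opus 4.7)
The plan is to parameterise $\C$ by the triple $(a,b,c)$ giving the number of lattices of index $p$, $q$, $pq$ in $\C$ (these are the only possibilities since $\lcm(\C)=pq$ and $\Z^2\notin\C$), and write $s=p+1-a$, $t=q+1-b$ for the number of index-$p$ and index-$q$ cocyclic lattices \emph{missing} from $\C$. Irredundancy forces $s,t\ge 1$, since otherwise $\L(p)$ or $\L(q)$ alone would cover. The central identity for all three parts is $c=st$, split into two inequalities. By Proposition~\ref{prop:CRT-P1}, for each missing pair $(L_0,M_0)\in(\L(p)\setminus\C)\times(\L(q)\setminus\C)$ the primitive vectors in $L_0\cap M_0$ form a single $\P(pq)$-class which must be realised by an index-$pq$ lattice in $\C$, giving $c\ge st$. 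Conversely, an index-$pq$ lattice in $\C$ whose image in $\P(p)$ or $\P(q)$ is \emph{not} missing is redundant, so there are at most $st$ irredundant index-$pq$ lattices, giving $c\le st$.

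For part~(i), combining $a+b+c=p+q+1$ with $c=st$ gives $(s-1)(t-1)=0$; say $s=1$, so $c=t$ and $1\le t\le q+1$. If $t\le q$ then any index-$q$ lattice $L\in\C$ has all its uniquely-covered primitive vectors contained in a single $\P(pq)$-class---the common preimage of $(L_0,L)$---so Proposition~\ref{prop:minimality-criterion} produces $D=pq>q$, contradicting minimality of $L$. Hence $t=q+1$, and $\C$ consists of $\L(p)\setminus\{L_0\}$ together with the $q+1$ preimages of $L_0$ in $\P(pq)$; this is exactly the $q$-refinement of $L_0\in\L(p)$. Combined with the symmetric case $t=1$, $s=p+1$ giving the $p$-refinements of $\L(q)$, we obtain $(p+1)+(q+1)=p+q+2$ distinct minimal coverings (the two branches are disjoint since one has $b=0$ and the other has $a=0$), and strong minimality follows from Theorem~\ref{thm:triv-refine}.

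For part~(ii), $(s-1)(t-1)=1$ forces $s=t=2$ and $c=4$, and the count $\binom{p+1}{2}\binom{q+1}{2}$ records the free choice of two missing lattices from each of $\L(p)$ and $\L(q)$. For part~(iii), the hypothesis $a=b=1$ fixes $s=p$ and $t=q$, whence $c=pq$ and $|\C|=pq+2$, with $(p+1)(q+1)$ such coverings. Minimality in both cases is verified as in~(i), except that the uniquely-covered vectors in an index-$p$ lattice $L$ now span several $\P(pq)$-classes---two in~(ii), $q$ in~(iii)---so we can pick $v,w$ in $L$ with different $\P(q)$-images, giving $v\wedge w\equiv 0\pmod p$ and $v\wedge w\not\equiv 0\pmod q$, hence $D=p$; index-$pq$ lattices are trivially minimal since all their primitive vectors are uniquely covered. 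Failure of strong minimality then follows from Proposition~\ref{prop:weight-formula} via the computations $\wt(\C)=2(pq+1)/((p+1)(q+1))>1$ in~(ii) and $\wt(\C)=1+1/((p+1)(q+1))>1$ in~(iii).

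The main obstacle is establishing the identity $c=st$, which is the algebraic pivot on which all three parts turn; both directions require combining the projective Chinese Remainder Theorem with the fact (Corollary~\ref{cor:unique-LcdN}) that each primitive vector lies in a unique cocyclic lattice of each index. A secondary subtlety is that the seemingly innocuous case $s=1$, $t=1$ in part~(i) yields an irredundant but non-minimal covering that collapses under Proposition~\ref{prop:minimality-criterion} into one of the two refinement families; this is the reason only $p+q+2$ of the naive configurations survive.
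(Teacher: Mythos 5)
Your proof is correct and follows essentially the same route as the paper: your central identity $c=st$ is exactly the paper's count (the number of index-$pq$ lattices equals $(p+1-n_p)(q+1-n_q)$ via the CRT bijection $\L(pq)\leftrightarrow\L(p)\times\L(q)$), and your elimination of $t\le q$ in part (i) is the paper's replacement-by-the-remaining-descendant argument rephrased through Proposition~\ref{prop:minimality-criterion}. Your only additions are the explicit minimality checks ($D=p$) and the weight computations certifying failure of strong minimality, details the paper leaves implicit.
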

\begin{proof}
    For (i), suppose $\lcm(\C)=pq$ and~$|\C|=p+q+1$. Let~$n_p$
    (respectively~$n_q$) be the number of lattices in~$\C$ with
    index~$p$ (respectively~$q$).  Together, these cover all
    but~$(p+1-n_p)(q+1-n_q)$ of the~$(p+1)(q+1)$ lattices in~$\L(pq)$,
    so this must be the number of lattices in~$\C$ of index~$pq$.  Hence
    \[
    p+q+1-n_p-n_q = (p+1-n_p)(q+1-n_q),
    \]
    which simplifies to~$(p-n_p)(q-n_q)=0$. Hence either~$n_p=p$
    or~$n_q=q$.

    If~$n_p=p$, then we must have~$n_q=0$: any lattice~$L$ in~$\C$ of
    index~$q$ would already have all but one of its~$p$-descendants
    covered by the lattices of index~$p$, so could be replaced by the
    remaining~$p$-descendant, contradicting minimality. So in this case
    $\C$ consists of~$p$ lattices of index~$p$ together with all
    the~$q$-descendants of the remaining index~$p$ lattice.  There
    are~$p+1$ such coverings, depending on which of the index~$p$
    lattices has been~$q$-refined.

    Similarly if~$n_q=q$, changing the roles of~$p$ and~$q$.

    For (ii), a similar argument gives~$(p-n_p)(q-n_q)=1$, so either~$n_p=p+1$
    and~$n_q=q+1$, or~$n_p=p-1$ and~$n_q=q-1$. The first possibility
    is not minimal (it is~$\L(p)\cup\L(q)$). The second possibility
    does give minimal coverings; the number is as given since there
    are~$\binom{p+1}{2}$ choices for the index~$p$ lattices
    and~$\binom{q+1}{2}$ choices for the index~$q$ lattices, after
    which the remaining index~$pq$ lattices are determined uniquely.
    
    Finally, (iii) is clear.
\end{proof}

For example, taking~$p=2$ and~$q=3$ or vice versa
in~\Cref{prop:lcm-pq}, we find the following minimal coverings~$\C$
with $\lcm(\C)=6$: there are~$7$ strongly minimal coverings of
size~$6$ by part~(i), and~$18$ minimal, but not strongly minimal,
coverings of size~$7$ by part~(ii) (one of the latter was given
in~\Cref{ex:min-not-strong}); and finally, there are~$12$ minimal but
not strongly minimal coverings of size~$8$ by part~(iii).


\subsection{All minimal coverings of size at most 6}
\Cref{prop:small-n-index-bounds} and~\Cref{prop:lcm-pq}~(i) show that
the nontrivial minimal coverings of size up to~$6$ are as follows:
\begin{enumerate}[(i)]
\item the full coverings~$\L(1)$, $\L(2)$, $\L(3)$, and~$\L(5)$, of
  size~$1$, $3$, $4$, $6$ respectively;
\item a covering obtained from~$\L(2)$ by applying up to three
  $2$-refinements, giving sizes~$4$, $5$, and~$6$;
\item a covering obtained from~$\L(3)$ by applying one $3$-refinement,
  giving size~$6$;
\item one of the seven coverings~$\C$ with~$\lcm(\C)=6$ given
  by~\Cref{prop:lcm-pq}~(i), obtained by~$3$-refining~$\L(2)$
  or~$2$-refining~$\L(3)$.
\end{enumerate}

\begin{table}[ht]
  \caption{All minimal lattice coverings of size up to~$6$}
  \label{tab:size-upto-6}
  \begin{tabular}{|c|l|c|}
    \hline
    Size & index structure & multiplicity \\
    \hline
    1 & (1) & 1 \\
    & total & 1 \\
    \hline
    2 &  -  & 0 \\
    & total & 0 \\
    \hline
    3 & (2,2,2) & 1 \\
    & total & 1 \\
    \hline
    4 & (2,2,(4,4)) &  3 \\
    & (3,3,3,3)   &  1 \\
    & total & 4 \\
    \hline
    5 & (2,(4,4),(4,4)) & 3 \\
    & (2,2,(4,(8,8))) & 6 \\
    & total & 9 \\
    \hline
    6 & ((4,4),(4,4),(4,4))   &  1 \\
    & (2,(4,4),(4,(8,8)))   & 12 \\
    & (2,2,((8,8),(8,8)))   &  3 \\
    & (2,2,(4,(8,(16,16)))) & 12 \\
    & (3,3,3,(9,9,9))       &  4 \\
    & (2,2,(6,6,6,6))       &  3 \\
    & (3,3,3,(6,6,6))       &  4 \\
    & (5,5,5,5,5,5)         &  1 \\
    & total & 40 \\
    \hline
  \end{tabular}
\end{table}

It is straightforward, though tedious, to list by hand all the
coverings obtained by refining the trivial covering.  The result of
carrying this out for sizes up to~$6$ are summarised
in~\Cref{tab:size-upto-6}.  To give the results in a concise form we
introduce some notation:
\begin{itemize}
\item $N$ or~$(N)$ denotes a cocyclic lattice of index~$N$, the
  trivial covering being denoted~$(1)$;
\item $(Np,Np,...,Np)$ with either~$p$ (if~$p\mid N$) or $p+1$
  (if~$p\nmid N$) terms denotes the $p$-refinement of  a cocyclic
  lattice of index~$N$
\end{itemize}
\Cref{tab:size-upto-6} only shows the lattice indices, and only
includes one of each set of similar coverings differing only in the
choice of which lattice of a given index is refined.  Hence, to obtain
literally all the coverings of each size, each entry in the list
should be used multiple times; these multiplicities are shown in the
table.  The indices are bracketed to show the refinement structure.
For example, $(2,2,2)$ denotes the $2$-refinement~$\L(2)$ of the
trivial covering; one of its $2$-refinements, replacing the last
lattice of index~$2$ by its two descendants of relative index~$2$, is
denoted $(2,2,(4,4))$.  This type has multiplicity~$3$, since the
second refinement step could have been applied to any one of the
index~$2$ lattices, so there are in all three coverings of size~$4$ by
lattices of index~$2$, $2$, $4$, and~$4$.

\begin{theorem}
\label{thm:all-upto-6}
  All minimal coverings of size~$n\le6$ are strongly minimal and
  refinements of the trivial covering, and are of one of the types
  shown in the table.  There are a total of~$55$ such coverings,
  including the trivial covering.
\end{theorem}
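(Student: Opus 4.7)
The plan is in two stages. First, combining the structural results already established, reduce to the enumeration of refinements of the trivial covering. Second, carry out that enumeration explicitly, grouping coverings by index structure.

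Let $\C$ be a minimal covering of size $n \le 6$. For $n \le 5$, \Cref{prop:small-n-index-bounds}(i) states directly that $\C$ is a refinement of the trivial covering. For $n = 6$, \Cref{prop:small-n-index-bounds}(ii) gives that $N := \lcm(\C)$ has at most two distinct prime factors. If $N$ is a prime power, \Cref{thm:all-p-power} applies. If $N = p^a q^b$ with $p < q$ distinct primes and $a,b \ge 1$, then \Cref{thm:Simpson-theorem-2-lattice} yields
\[
6 \ge 1 + G(N) = 3 + a(p-1) + b(q-1),
\]
so $a(p-1) + b(q-1) \le 3$. Since $p \ge 2$, $q \ge 3$, this forces $p = 2$, $q = 3$, $a = b = 1$, hence $N = 6$ and $|\C| = p+q+1$. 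Then \Cref{prop:lcm-pq}(i) identifies $\C$ as one of the seven minimal coverings obtained either by $3$-refining a single lattice in $\L(2)$ or by $2$-refining a single lattice in $\L(3)$; each of these is again a refinement of the trivial covering. Combined with \Cref{thm:triv-refine}, this shows that every minimal covering of size at most $6$ is strongly minimal and a refinement of the trivial covering. The seven index-$6$ coverings account for the rows $(2,2,(6,6,6,6))$ and $(3,3,3,(6,6,6))$ of \Cref{tab:size-upto-6}, with multiplicities $3$ and~$4$.

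The remaining step is to enumerate the refinements of the trivial covering of size at most~$6$, grouped by refinement tree structure (encoded by the nesting brackets of \Cref{tab:size-upto-6}). For each size~$n \le 6$ one lists the possible structures recursively, using the branching rules of \Cref{cor:lattice-index-p} (a $p$-refinement produces $p$ or $p+1$ new lattices, depending on whether $p$ already divides the index of the refined lattice), and computes the multiplicity of each structure as the product, over refinement steps, of the number of lattices in the current covering eligible to be refined at that step. For instance, the structure $(2,(4,4),(4,(8,8)))$ of size~$6$ arises from three successive $2$-refinements starting from~$\L(2)$, with $3$ choices at the first step, $2$ at the second, and $2$ at the third, giving multiplicity $3 \cdot 2 \cdot 2 = 12$. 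Summing the multiplicities of all structures in the table gives $1 + 0 + 1 + 4 + 9 + 40 = 55$.

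The main obstacle is completeness of the enumeration in the second step. However, the size bound $n \le 6$ combined with the branching rules leaves only finitely many possible index structures, which can be exhaustively verified by hand. The individual multiplicities are then computed by the branching principle above. No double-counting occurs because coverings are determined by their underlying set of lattices, and for each structure the choices at each refinement step produce genuinely distinct coverings.
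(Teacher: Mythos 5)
Your proposal is correct and follows essentially the same route as the paper: the same reduction via \Cref{prop:small-n-index-bounds}, \Cref{thm:all-p-power}, \Cref{thm:Simpson-theorem-2-lattice} and \Cref{prop:lcm-pq}~(i) to refinements of the trivial covering (with strong minimality from \Cref{thm:triv-refine}), followed by a hand enumeration of refinement structures with multiplicities as in \Cref{tab:size-upto-6}. Your stage-one argument is in fact slightly more explicit than the paper's, and in the enumeration you should just phrase the multiplicity count per nesting structure (choices of which lattice sits at each node) rather than as a raw product over refinement steps, to avoid overcounting from commuting steps --- your example value $12$ is nonetheless correct.
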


This result agrees with the lists of coverings
in~\cite{FouvryKoymansIII}. For example, one of the three coverings
included in the line $(2,2,(6,6,6,6))$ of~\Cref{tab:size-upto-6} is
the tenth in the list on~\cite[p.47]{FouvryKoymansIII}, whose lattices
are (in the same order as in~\cite{FouvryKoymansIII} and in our
notation, but writing $(c:d)_N$ for $L(c:d;N)$): $(0:1)_2, (1:0)_6,
(1:1)_2, (1:4)_6, (3:2)_6, (1:2)_6$.  To construct this covering,
start with the trivial covering and apply $2$-refinement to obtain to
the full index-$2$ covering~$\L(2)=\{(0:1)_2, (1:0)_2, (1:1)_2\}$;
then replace~$(1:0)_2$ by its $3$-descendants, which are its
intersection with each of the four index-$3$ lattices
in~$\L(3)=\{(0:1)_3, (1:1)_3, (2:1)_3, (1:0)_3\}$, using the CRT to
write each intersection giving (in the same order) $\{(3:2)_6,
(1:4)_6, (1:2)_6, (1:0)_6\}$. This results in the covering~$\{(0:1)_2,
(3:2)_6, (1:4)_6, (1:2)_6, (1:0)_6, (1:1)_2\}$, which is presented
in~\cite[p.47]{FouvryKoymansIII} by giving a~$\Z$-basis for each of
the~$6$ lattices.

\subsection{All minimal coverings of size 7}
To determine all minimal coverings of size~$7$, we proceed as follows.
First, we list all possible refinements of the trivial cover, starting
with the index lists in~\Cref{tab:size-upto-6}:
\begin{itemize}
\item apply a $2$-refinement to a lattice of even index in a covering of size~$6$;
\item apply a $3$-refinement to a lattice of index not divisible by~$3$ in
  a covering of size~$4$.
\end{itemize}
In principle, we could also have applied a $2$-refinement to a lattice
of odd index in a covering of size~$5$, or a $3$-refinement to a
lattice of index divisible by~$3$ in a covering of size~$5$, but there
are none of these.  This leads to~$11$ different sorted index lists,
which are shown in the first~$11$ lines of ~\Cref{tab:size-7}.
Counting multiplicities, there are~$126$ coverings of size~$7$ which
are refinements of the trivial covering.
\begin{table}[ht]
  \caption{All minimal lattice coverings of size~$7$}
  \label{tab:size-7}
  \begin{tabular}{|l|c|c|}
    \hline
    index structure & multiplicity & strongly minimal? \\
    \hline
    ((4,4),(4,4),(4,(8,8)))     &  6 & yes \\
    (2,(4,(8,8)),(4,(8,8)))     & 12 & yes \\
    (2,((4,4),(8,8),(8,8)))     &  6 & yes \\
    (2,(4,4),(4,(8,(16,16))))   & 24 & yes \\
    (2,2,((8,8),(8,(16,16))))   & 12 & yes \\
    (2,2,(4,((16,16),(16,16)))) &  6 & yes \\
    (2,2,(4,(8,(16,(32,32)))))  & 24 & yes \\
    (2,(4,4),(6,6,6,6))         &  6 & yes \\
    (2,2,(6,6,6,(12,12)))       & 12 & yes \\
    (3,3,3,(6,6,(12,12)))       & 12 & yes \\
    (2,2,(4,(12,12,12,12)))     &  6 & yes \\
    \hline
    total strongly minimal      & 126 & \\
    \hline
    (2,3,3,6,6,6,6)             & 18 & no \\
    \hline
    total not strongly minimal  & 18 & \\
    \hline
    total                       & 144 & \\
    \hline
  \end{tabular}
\end{table}

We also know that there are~$18$ coverings with index
sequence~$(2,3,3,6,6,6,6)$ which are minimal but not strongly minimal,
from~\Cref{prop:lcm-pq}~(ii).

Our theoretical results are not quite strong enough to establish the
fact that there are no additional minimal coverings~$\C$ of
size~$7$.
From~\Cref{thm:Simpson-theorem-2-lattice}, together
with~\Cref{cor:n-p-e-bound-p-power} to eliminate odd prime powers, we
see that~$N=\lcm(\C)$ must be one of the following:
\begin{itemize}
\item $N=2^e$ for $3\le e\le 5$; these~$\C$ are all refinements,
  by~\Cref{thm:all-p-power}, and their index sequences are listed in
  the first seven rows of~\Cref{tab:size-7};
\item $N=6$; these are given by~\Cref{prop:lcm-pq}~(ii), none are
  refinements or strongly minimal, and are shown in the last row
  of~\Cref{tab:size-7};
\item $N=12$; rows 8--11 of~\Cref{tab:size-7} show four different index
  sequences of refinement coverings (which are therefore all strongly
  minimal) in this case.
\end{itemize}
To establish that there are no more minimal
coverings~$\C$ of size~$7$ with~$\lcm(\C)=12$, we rely on machine
computations.  Running our code from~\cite{CK} with~$n=7$, which takes
under~$2$ minutes of computer time, yields precisely the~$144$ minimal
coverings we have already found, including $126$ strongly minimal
refinements which are all refinements coverings, and the
additional~$18$ which are not strongly minimal.

\begin{theorem}
\label{thm:all-7}
  All minimal covering of size~$n=7$ are either strongly minimal
  coverings, of which there are~$126$, all refinements of the trivial
  covering, or are one of~$18$ additional coverings, which are not
  strongly minimal, making~$144$ minimal coverings of size~$7$ in
  all. The structure of these is given in~\Cref{tab:size-7}.
\end{theorem}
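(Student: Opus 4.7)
The plan is to classify all minimal coverings $\C$ of size $7$ by the value of $N=\lcm(\C)$. Theorem \ref{thm:Simpson-theorem-2-lattice} forces $G(N)\le 6$, which together with the formula $G(p^e)=e(p-1)+1$ and additivity restricts $N$ to the short list $\{1,2,3,4,5,6,8,9,12,16,32\}$. Corollary \ref{cor:n-p-e-bound-p-power} then eliminates the odd prime powers $3,5,9$ via parity and modular conditions on $n=7$, and the values $N\le 4$ cannot support a covering of size $7$ (direct inspection of the $2$-refinement tree bounds the size by $6$ in that range). This leaves $N\in\{6,8,12,16,32\}$ as the only cases to analyse.

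For $N\in\{8,16,32\}$, Theorem \ref{thm:all-p-power} guarantees that every such irredundant covering is a refinement of the trivial covering. I would enumerate these by walking the $2$-refinement tree starting from $\{\Z^2\}$: each $2$-refinement step after the first adds exactly $1$ to the size (since once the trivial step is taken, all lattices have even index), so reaching size $7$ requires precisely six refinement steps. Grouping by the resulting index multiset and counting the number of ways to choose which lattice to refine at each stage is a small finite computation that yields the seven prime-power structures in rows 1–7 of \Cref{tab:size-7}, all strongly minimal by \Cref{thm:triv-refine}, and contributing $6+12+6+24+12+6+24=90$ coverings.

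For $N=6$, Proposition \ref{prop:lcm-pq} applies directly with $(p,q)=(2,3)$: since $|\C|=7=p+q+2$, part (ii) of that proposition describes the minimal coverings as those obtained by removing two lattices from each of $\L(2)$ and $\L(3)$ and then filling in the four index-$6$ lattices forced by the covering condition. Counting gives $\binom{3}{2}\binom{4}{2}=18$ coverings, none strongly minimal, which match the last row of the table.

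The remaining case $N=12$ is the main obstacle. The refinement coverings here arise by interleaving a single $3$-refinement with several $2$-refinements applied to an index-$1$, $2$, or $4$ lattice, producing the four index structures of rows 8–11, with multiplicities $6+12+12+6=36$, all strongly minimal. The difficulty, which the structural results developed so far do \emph{not} overcome, is excluding the possible existence of a minimal covering with $\lcm(\C)=12$ that is not a refinement of the trivial covering, in the spirit of \Cref{ex:strong-not-refinement} but at much smaller scale. To rule this out I would invoke the exhaustive machine enumeration \cite{CK}: running it for $n=7$ returns exactly the $126+18=144$ coverings already accounted for, completing the classification.
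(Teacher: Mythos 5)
Your proposal is correct and takes essentially the same route as the paper: bound $N=\lcm(\C)$ via \Cref{thm:Simpson-theorem-2-lattice} together with \Cref{cor:n-p-e-bound-p-power}, use \Cref{thm:all-p-power} for the prime-power cases, \Cref{prop:lcm-pq}(ii) for $N=6$, enumerate refinements of the trivial covering for the $126$ strongly minimal ones, and appeal to the machine enumeration of \cite{CK} to rule out non-refinement minimal coverings with $N=12$ --- exactly the argument given in the paper. (Only a trivial slip: reaching size $7$ by successive $2$-refinements takes five steps, not six, since the first step adds $2$ to the size and each later step adds $1$; this does not affect the enumeration.)
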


\subsection{All minimal coverings of size 8}
For size~$8$ we proceeded as for size~$7$, finding~$550$ strongly
minimal coverings, all refinements of the trivial covering, with~$32$
distinct sorted index sequences in~$35$ different refinement types.

In addition, there are~$174$ minimal coverings of size~$8$ which are
not strongly minimal:
\begin{itemize}
\item $12$ with index sequence~$(2,3,6,6,6,6,6,6)$, of the type
  described in~\Cref{prop:lcm-pq}~(iii);
\item $18$ with index sequence~$(3,3,4,4,6,6,6,6)$; these are
  $2$-refinements of the size~$7$ coverings with index
  sequence~$(2,3,3,6,6,6,6)$, refining the lattice of index~$2$;
\item $72$ with index sequence~$(2,3,3,6,6,6,12,12)$; these are also
  $2$-refinements of the size~$7$ coverings with index
  sequence~$(2,3,3,6,6,6,6)$, refining one of the four lattices of
  index~$6$;
  \item $72$ with index sequence~$(2,3,3,4,6,6,12,12)$. These consist
    of: any one lattice of index~$2$; any one of index~$4$ not
    contained in it; any two lattices of index~$3$; and two lattices
    each of index~$6$ and~$12$ separated from each other and the
    preceding ones.
\end{itemize}
One example of the last type of covering is
\begin{align*}
  & L(1:1;2),  L(1:2;4),  L(1:0;3), L(1:1;3),\\
  & L(0:1;6), L(2:1;6),  L(1:8;12), L(3:4;12).
\end{align*}
Running our program~\cite{CK} with~$n=8$ takes just under~$4$ hours of computer
time.

\begin{theorem}
\label{thm:all-8}
  All minimal covering of size~$n=8$ are either strongly minimal
  coverings, of which there are~$550$, all refinements of the trivial
  covering, or are one of~$174$ additional coverings of four types,
  which are not strongly minimal, making~$724$ minimal coverings of
  size~$8$ in all. The structure of these is given
  in~\Cref{tab:size-8} in~\Cref{aTable}.
\end{theorem}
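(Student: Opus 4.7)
The plan follows the same strategy used to establish \Cref{thm:all-upto-6} and \Cref{thm:all-7}. First, I would invoke \Cref{thm:Simpson-theorem-2-lattice} to constrain the possible values of $N=\lcm(\C)$ for a minimal covering $\C$ of size~$n=8$: we need $G(N)\le 7$. Combined with \Cref{cor:n-p-e-bound-p-power}, which forces $n\equiv2\pmod{p-1}$ for every prime~$p$ dividing $\lcm(\C)$ when $\lcm(\C)$ is a prime power, this restricts $N$ to a short explicit list (powers of~$2$ with $G\le7$, powers of~$3$ with $G\le7$, the prime~$7$, and composite values of the form $2^a3^b$ or $2\cdot p$, $3\cdot p$ with $p$ suitably small).

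For each prime power $N$ in this list, \Cref{thm:all-p-power} guarantees that $\C$ is obtained from the trivial covering by a sequence of $p$-refinements, so the corresponding coverings are enumerated by continuing the refinement trees appearing in the size-$6$ and size-$7$ tables one step further. Each such refinement is automatically strongly minimal by \Cref{thm:triv-refine}, contributing to the 550 strongly minimal coverings. The composite cases $N\in\{6,12,18,24\}$ are treated similarly, by applying a single further $p$-refinement (with $p\in\{2,3\}$) to the size-$7$ refinement coverings in \Cref{tab:size-7}, and then verifying minimality using \Cref{prop:minimality-criterion}.

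For the non-strongly-minimal coverings, I would first apply \Cref{prop:lcm-pq}(iii) with $(p,q)=(2,3)$, which yields exactly $(p+1)(q+1)=12$ minimal coverings of index structure $(2,3,6,6,6,6,6,6)$. Next, starting from the 18 size-$7$ coverings with index sequence $(2,3,3,6,6,6,6)$ given by \Cref{prop:lcm-pq}(ii), I would apply a $2$-refinement to the (unique) lattice of index~$2$, producing 18 coverings of type $(3,3,4,4,6,6,6,6)$, and to each of the four lattices of index~$6$, producing $18\cdot 4 = 72$ coverings of type $(2,3,3,6,6,6,12,12)$, again checking minimality with \Cref{prop:minimality-criterion}. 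Finally, the fourth family, $(2,3,3,4,6,6,12,12)$, can be constructed independently: choose a lattice of index~$2$, a cocyclic sublattice of index~$4$, two lattices of index~$3$, and then four pairwise separated lattices (and separated from the preceding ones) of indices $6,6,12,12$; a counting argument along the lines of \Cref{prop:lcm-pq} gives~$72$ such coverings.

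The main obstacle is completeness: the theoretical arguments identify these four non-strongly-minimal families together with the strongly minimal refinement coverings as candidates, but ruling out further minimal coverings with composite $\lcm$ (in particular with $\lcm(\C)\in\{12,18,24\}$) requires a case-by-case enumeration of all irredundant collections of cocyclic lattices compatible with the admissible $\lcm$ values. As in the size-$7$ case, this final verification is carried out by the computer program in~\cite{CK}, which systematically builds irredundant coverings lattice-by-lattice using \Cref{prop:covering-criterion} and the index bounds derived from \Cref{thm:Simpson-theorem-2-lattice}, then applies \Cref{prop:minimality-criterion} to filter for minimality; running it with $n=8$ confirms the total of $724=550+12+18+72+72$ minimal coverings.
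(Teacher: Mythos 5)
Your proposal follows essentially the same route as the paper: enumerate the $550$ strongly minimal coverings as refinements of the trivial covering, obtain the four non-strongly-minimal families (with counts $12+18+72+72=174$) via \Cref{prop:lcm-pq} and $2$-refinements of the size-$7$ coverings of type $(2,3,3,6,6,6,6)$ plus the independent $(2,3,3,4,6,6,12,12)$ construction, and defer completeness to the computer search in~\cite{CK}, exactly as the paper does. The only slight imprecision is that some size-$8$ refinement coverings (e.g.\ $(7,7,7,7,7,7,7,7)$ and $(2,2,(10,10,10,10,10,10))$) arise by one refinement step from coverings of size smaller than $6$ or $7$, not only from the size-$6$ and size-$7$ lists, but this does not affect the argument since the final enumeration is machine-verified.
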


\section{Covering systems of congruences}
\label{sec:congruences}
We briefly note here the analogues of the results proved above for
lattice coverings, and which therefore apply to projective covering
systems, to the context of covering systems, where most of them are
trivial.  Recall that we denote the residue class~$a+N\Z$ by~$R(a;N)$.
\begin{enumerate}[(i)]
\item If~$N=N_1N_2$ with~$N_1,N_2$ coprime, then there is a bijection
  between pairs of classes~$R(a_1;N_1), R(a_2;N_2)$ and single
  classes~$R(a;N)$.  (This is the usual Chinese Remainder Theorem.)
\item For a fixed positive integer~$N$, there is a bijection between
  the distinct residue classes~$R(a;N)$ and~$\Z/N\Z$ (that is, the
  affine line over~$\Z/N\Z$).
\item The number of residue classes modulo~$N$ is~$N$.
\item For a fixed positive integer~$N$, every integer~$a$ belongs to
  exactly one residue class modulo~$N$, namely~$R(a;N)$.
  \item If $M\mid N$ then every class~$R(a;M)$ modulo~$M$ is the
    disjoint union of~$N/M$ classes modulo~$N$, namely $R(a+kM;N)$
    for $0\le k < N/M$.
\item A \emph{covering system} is a finite collection of
  classes~$R(a_i;N_i)$ whose union is~$\Z$.  It is \emph{minimal} if
  none of the classes can be replaced by a strict subclass without
  losing the covering property, and \emph{strongly minimal} if every
  integer belongs to exactly one class in the covering.  These have been studied in the literature under various names such as ``exactly covering system'' or ``disjoint covering system'', see for example~\cite{NZ, Znam, Simpson2}.
\item The trivial covering system is the one with just one
  class, $R(0;1)$.
\item The set of all classes~$R(a;N)$ modulo~$N$ (for $0\le a<N$) is a
  strongly minimal covering, the \emph{full covering modulo~$N$}.
\item Not every minimal covering is strongly minimal.  For example:
  \[
  R(0;2) \cup R(0;3) \cup R(1;6) \cup R(5;6)
  \]
  is a minimal covering, but integers divisible by~$6$ are contained in
  both the first two classes.
\item If $\{R(a_i;N_i)\}$ is a covering system, then
  $\sum_i1/N_i\ge1$, with equality if and only if the covering is
  strongly minimal.
\item Given a covering system~$\{R(a_i;N_i)\}$ and a prime~$p$, a
  $p$-refinement of the covering system is any new covering obtained
  by replacing one of the~$R(a_i;N_i)$ by its~$p$ subclasses of
  modulus~$pN_i$.  (The exact same construction works when~$p$ is an
  arbitrary positive integer.)  Repeating the process any finite
  number of times, we obtain a general refinement of the original
  covering.  If~$N$ is the least common multiple of the original
  moduli~$N_i$, then by repeated $p$-refinement we can reach a
  covering in which all the moduli are equal to~$N$, possibly with
  repeats. This will have no repeats if and only if the
  original system was strongly minimal.
\item Every refinement of the trivial covering is strongly minimal,
  and has the property that the moduli are not coprime (being all
  divisible by the prime used in the first refinement step).
\item Not every strongly minimal covering system can be obtained by
  refining the trivial covering:  for example,
  $R(0;6)$, $R(1,10)$, $R(2,15)$ are pairwise disjoint and covering
  $5+3+2=11$ of the residue classes modulo~$30$; if we include the
  remaining~$19$ classes~$R(a;30)$ we obtain a strongly minimal
  covering, where the moduli are jointly coprime ($6$, $10$, $15$, and~$30$).
\item (See~\cite[Cor.~2]{Simpson}.) A minimal covering system by~$n$
  classes whose moduli have least common multiple with
  factorization~$\prod_{i=1}^{t}p_i^{e_i}$ satisfies
  \[
  n \ge 1 + \sum_{i=1}^{t}e_i(p_i-1).
  \]
\end{enumerate}

The literature about covering systems concentrates on  the study of
systems where no two classes have the same moduli.  An example due to
Erd\H{o}s is
\[
R(0;2) \cup R(1;4) \cup R(3;8) \cup R(0;3) \cup R(7;12) \cup R(23;24),
\]
which is minimal but not strongly minimal: $\sum_i1/N_i=32/24 =
4/3$. The analogous problem for lattice coverings would be to ask for
coverings by lattices with distinct indices; we have not studied this.

\addresshere

\newpage
\appendix
\section{Coverings of size 8}
\label{aTable}
\begin{table}[ht]
  \caption{Strongly minimal lattice coverings of size~$8$}
  \label{tab:size-8}
  \begin{tabular}{|l|c|c|}
    \hline
    index structure & multiplicity & strongly minimal? \\
    \hline
%
  (2,2,(4,(8,(16,(32,(64,64))))))  & 48 & yes \\
  (2,2,(4,(8,((32,32),(32,32)))))  & 12 & yes \\
  (2,2,(4,((16,16),(16,(32,32))))) & 24 & yes \\
  (2,2,((8,8),(8,(16,(32,32)))))   & 24 & yes \\
  (2,2,((8,8),((16,16),(16,16))))  &  6 & yes \\
  (2,2,(8,(16,16)),(8,(16,16))))   & 12 & yes \\
  (2,(4,4),(4,(8,(16,(32,32)))))   & 48 & yes \\
  (2,(4,4),(4,((16,16),(16,16))))  & 12 & yes \\
  (2,(4,4),((8,8),(8,(16,16))))    & 24 & yes \\
  (2,(4,(8,8)),(4,(8,(16,16))))    & 48 & yes \\
  (2,(4,(8,8)),((8,8),(8,8)))      & 12 & yes \\
  ((4,4),(4,4),(4,(8,(16,16))))    & 12 & yes \\
  ((4,4),(4,4),((8,8),(8,8)))      &  3 & yes \\
  ((4,4),(4,(8,8)),(4,(8,8)))      & 12 & yes \\ 
    %
    %
    (3,3,3,(9,9,(27,27,27)))       & 12 & yes \\
    (3,3,(9,9,9),(9,9,9))          &  6 & yes \\ 
%
%
  (2,2,(4,(8,(24,24,24,24))))      & 12 & yes \\
  (2,2,(4,(12,12,12,(24,24))))     & 24 & yes \\
  (2,2,(6,6,6,(12,(24,24))))       & 24 & yes \\
  (2,2,(6,6,6,(18,18,18)))         & 12 & yes \\
  (2,2,(6,6,(12,12),(12,12)))      & 18 & yes \\
  (2,2,((8,8),(12,12,12,12)))      &  6 & yes \\
  (2,(4,4),(4,(12,12,12,12)))      & 12 & yes \\
  (2,(4,4),(6,6,6,(12,12)))        & 24 & yes \\
  (2,(4,(8,8)),(6,6,6,6))          & 12 & yes \\
  (3,3,3,(6,6,(12,(24,24))))       & 24 & yes \\
  (3,3,3,(6,(12,12),(12,12)))      & 12 & yes \\
  (3,3,3,(6,6,(18,18,18)))         & 12 & yes \\
  (3,3,3,(9,9,(18,18,18)))         & 12 & yes \\
  (3,3,(6,6,6),(6,6,6))            &  6 & yes \\
  (3,3,(6,6,6),(9,9,9))            & 12 & yes \\
  ((4,4),(4,4),(6,6,6,6))          &  3 & yes \\ 
  %
  %
  (2,2,(10,10,10,10,10,10))        & 3 & yes \\
  (5,5,5,5,5,(10,10,10))           & 6 & yes \\
  %
  %
  (7,7,7,7,7,7,7,7)                & 1 & yes \\ 
  \hline
  total strongly minimal           & 550 & \\
  \hline
  (2,3,6,6,6,6,6,6)                & 12 & no \\
  (3,3,4,4,6,6,6,6)                & 18 & no \\
  (2,3,3,6,6,6,12,12)              & 72 & no \\
  (2,3,3,4,6,6,12,12)              & 72 & no \\
    \hline
  total not strongly minimal       & 174 & \\
  \hline
  total                            & 724 & \\ 
  \hline 
  \end{tabular}
\end{table}

\begin{thebibliography}{99}
\bibitem{Balister}
P. Balister, 
\textit{Erd\H{o}s Covering Systems}. 
In: Fischer F, Johnson R, eds. Surveys in Combinatorics 2024. 
London Mathematical Society Lecture Note Series. Cambridge University Press; 2024:31--54. 

\bibitem{BBMST}
P. Balister, B. Bollob\'as, R. Morris, J, Sahasrabudhe and M. Tiba, 
\textit{The structure and number of Erd\H{o}s covering systems}. 
J. Eur. Math. Soc. \textbf{26} (2024), 75--109.

\bibitem{JCbook} 
J. E. Cremona, 
\textit{Algorithms for Modular Elliptic Curves}, Cambridge University Press.  2nd edition (1997).
Available online at~\url{https://johncremona.github.io/book/fulltext/index.html}.

\bibitem{CK}
J. E. Cremona and P. Koymans,
Github repository associated with this project~\url{https://github.com/JohnCremona/LatticeCoverings}.

\bibitem{CremonaSadek} 
J. E. Cremona and M. Sadek, 
\textit{Local and global densities for Weierstrass models of Elliptic Curves},
Math. Res. Lett. \textbf{30} (2023), 413--461.

\bibitem{Erdos1}
P.  Erd\H{o}s, 
\textit{On integers of the form $2^k + p$ and some related problems}, 
Summa Brasil. Math. \textbf{2} (1950), 113--123.

\bibitem{Erdos2}
P. Erd\H{o}s, 
On a problem concerning congruence systems (Hungarian), 
Mat. Lapok \textbf{3} (1952), 122--128.

\bibitem{FouvryKoymansI} 
\'E. Fouvry and P. Koymans, 
\textit{Binary quadratic forms with the same value set}, 
Indag. Math. \textbf{36} (2025), 1157--1179.

\bibitem{FouvryKoymansII} 
\'E. Fouvry and P. Koymans, 
\textit{Binary forms with the same value set II: The case of $D_4$},
Res. Number Theory \textbf{11} (2025), Paper No. 43.

\bibitem{FouvryKoymansIII} 
\'E. Fouvry and P. Koymans, 
\textit{Binary forms with the same value set III: The case of $D_3$ and $D_6$}, 
Adv. Math. \textbf{475} (2025), Paper No. 110326.

\bibitem{Hough}
B. Hough, 
\textit{Solution of the minimum modulus problem for covering systems}. 
Ann. of Math. (2) \textbf{181} (2015), 361--382.

\bibitem{HN}
R. D. Hough and P. P. Nielsen,
\textit{Covering systems with restricted divisibility}. 
Duke Math. J. \textbf{168} (2019), 3261--3295.

\bibitem{NZ}
B. Nov\'ak and \u{S}. Zn\'am,
\textit{Disjoint Covering Systems}.
The American Mathematical Monthly \textbf{81} (1974), 42--45.

\bibitem{Sage} 
The Sage Developers,
\href{https://www.sagemath.org}{\emph{SageMath}} 
(version \texttt{10.2}), available at \url{https://www.sagemath.org}, 2025.

\bibitem{Shimura} 
G. Shimura, 
\textit{Arithmetic Theory of automorphic functions}. 
Publications of the Mathematical Society of Japan, No. 11.  Iwanami Shoten, Publishers, Tokyo 1971.

\bibitem{Simpson} 
R. J. Simpson, 
\textit{Regular coverings of the integers by arithmetic progressions}, 
Acta Arith. XLV (1985), 145--152.

\bibitem{Simpson2}
R. J. Simpson,
\textit{Disjoint Covering Systems of Congruences},
The American Mathematical Monthly \textbf{94} (1987), 865--868.

\bibitem{Znam}
\u{S}. Zn\'am.
\textit{On Exactly Covering Systems of Arithmetic Sequences},
Math. Ann. \textbf{180} (1969), 227--232.
\end{thebibliography}
\end{document}